\newtheorem{dummytheorem}{Dummy-Theorem}[section]
\newcommand{\proofendsign}{$\Box$} 
\newtheorem{definition}[dummytheorem]{Definition}
\newtheorem{lemma}[dummytheorem]{Lemma}
\newtheorem{theorem}[dummytheorem]{Theorem}
\newtheorem{proposition}[dummytheorem]{Proposition}
\newtheorem{corollary}[dummytheorem]{Corollary}
\newenvironment{proof}{{\noindent \bf Proof }}
 {{\hspace*{\fill}\proofendsign\par\bigskip}}
\newtheorem{remarknorm}[dummytheorem]{Remark}
\newtheorem{examplenorm}[dummytheorem]{Example}
\newcommand{\V}{\mathbf{V}}
\newcommand{\bE}{\mathbf{E}}
\newcommand{\bD}{\mathbf{D}}
\newcommand{\bC}{\mathbf{C}}
\newcommand{\N}{\mathbb{N}}
\newcommand{\Q}{\mathbb{Q}}
\newcommand{\R}{\mathbb{R}}
\newcommand{\F}{\mathbb{F}}
\newcommand{\G}{\mathbb{G}}
\newcommand{\pr}{\mathbb{P}}
\newcommand{\ex}{\mathbb{E}}
\newcommand{\vari}{\mathbb{V}{\rm ar}}
\newcommand{\covi}{\mathbb{C}{\rm ov}}
\newcommand{\eins}{\mathbbm{1}}
\newcommand{\cadlag}{c\`adl\`ag}
\begin{document}


\title{Functional delta-method for the bootstrap of quasi-Hadamard differentiable functionals}

\author{
Eric Beutner\footnote{Department of Quantitative Economics, Maastricht Univ., {\tt e.beutner@maastrichtuniversity.nl}}
\qquad\qquad
Henryk Zähle\footnote{Department of Mathematics, Saarland University, {\tt zaehle@math.uni-sb.de}}}
\date{} 
\maketitle

\begin{abstract}
The functional delta-method provides a convenient tool for deriving the asymptotic distribution of a plug-in estimator of a statistical functional from the asymptotic distribution of the respective empirical process. Moreover, it provides a tool to derive bootstrap consistency for plug-in estimators from bootstrap consistency of empirical processes. It has recently been shown that the range of applications of the functional delta-method for the asymptotic distribution can be considerably enlarged by employing the notion of quasi-Hadamard differentiability. Here we show in a general setting that this enlargement carries over to the bootstrap. That is, for quasi-Hadamard differentiable functionals bootstrap consistency of the plug-in estimator follows from bootstrap consistency of the respective empirical process. This enlargement often requires convergence in distribution of the bootstrapped empirical process w.r.t.\ a nonuniform sup-norm. The latter is not problematic as will be illustrated by means of examples.
\end{abstract}

{\bf Keywords:} Bootstrap; Functional delta-method; Quasi-Hadamard differentiability; Statistical functional; Weak convergence for the open-ball $\sigma$-algebra.

\bigskip
\bigskip



\newpage

\section{Introduction}\label{section introduction}

The bootstrap is a widely used technique to approximate the unknown error distribution of estimators. Since the seminal paper by \citet{Efron1979} many variants of his bootstrap
procedure have been introduced in the literature. Furthermore, the bootstrap has quickly been extended to other data than a sample of independent and identically distributed random variables. For general accounts on the bootstrap one may refer to \citet{Efron1994}, \citet{ShaoTu1995}, \citet{Davison and Hinkley(1997)}, \citet{Lahiri2003}, among others.

For a (tangentially) Hadamard differentiable map $f$ the functional delta-method leads to the asymptotic distribution of $a_n(f(\widehat T_n)-f(\theta))$ whenever the asymptotic distribution of $a_n(\widehat T_n-\theta)$ is known. Here $\widehat T_n$ is an estimator for a (possibly infinite-dimensional) parameter $\theta$, and $(a_n)$ is a sequence of real numbers tending to infinity such that $a_n(\widehat T_n-\theta)$ has a non-degenerate limiting distribution. This extends to the bootstrap, i.e.\ bootstrap consistency of $a_n(f(\widehat T_n^*)-f(\widehat T_n))$ follows from bootstrap consistency of $a_n(\widehat T_n^*-\widehat T_n)$ for (tangentially) Hadamard differentiable $f$; see, for instance, \citet[Theorems 3.9.11 and 3.9.13]{van der Vaart Wellner 1996}. Here $\widehat T_n^*$ is a bootstrapped version of $\widehat T_n$ based on some random mechanism. For a recent partial generalization of these results, see also \citet{VolgushevShao2014}. \cite{Parr1985} established a functional delta-method for the bootstrap of Fréchet differentiable maps $f$, and \cite{CuevasRomo1997} obtained a corresponding result for the so-called smoothed bootstrap.

A drawback of the classical functional delta-method is its restricted range of applications. For many statistical functionals $f$ (including classical L-, V- and M-functionals) the condition of (tangential) Hadamard differentiability is simply too strong. For this reason \citet{BeutnerZaehle2010} introduced the notion of quasi-Hadamard differentiability, which is weaker than (tangential) Hadamard differentiability but still strong enough to obtain a generalized version of the classical functional delta-method; see also the Appendix \ref{appendix Functional Delta-Method}. Combined with results for weak convergence of empirical processes w.r.t.\ nonuniform sup-norms the concept of quasi-Hadamard differentiability led to some new weak convergence results for plug-in estimators of statistical functionals based on dependent data; see \citet{BeutnerZaehle2010,BeutnerZaehle2012}, \cite{Ahn and Shyamalkumar2011}, \citet{BeutnerWuZaehle2012}, \cite{Kraetschmeretal2013}, and \cite{KraetschmerZaehlel2016}. See also \citet{BeutnerZaehle2013} and \citet{Buchsteiner2015} for some recent results on weak convergence of empirical processes w.r.t.\ nonuniform sup-norms.

In this article, we will show that the notion of quasi-Hadamard differentiability admits even a functional delta-method for the bootstrap. This enlarges the set of functionals $f$ for which bootstrap consistency of $a_n(f(\widehat T_n^*)-f(\widehat T_n))$ follows immediately from bootstrap consistency of $a_n(\widehat T_n^*-\widehat T_n)$. To illustrate this, let us briefly discuss distortion risk functionals as examples for $f$ where the parameter $\theta$ is a distribution function $F$ on the real line, $\widehat T_n$ represents the empirical distribution function $\widehat F_n$ of $n$ real-valued random variables with distribution function $F$, and $\widehat T_n^*$ corresponds to a bootstrapped version $\widehat F_n^*$ of $\widehat F_n$.

Given a continuous concave distortion function $g$, i.e.\ a concave function $g:[0,1]\rightarrow[0,1]$ being continuous at $0$ and satisfying $g(0)=0=1-g(1)$, the corresponding {\em distortion risk functional} $f_g:\bD(f_g)\rightarrow\R$ is defined by
\begin{equation}\label{def dist risk meas}
    f_g(F)\,:=\, \int_{-\infty}^0 g(F(t))\,dt-\int_0^\infty \big(1-g(F(t))\big)\,dt,
\end{equation}
where $\bD(f_g)$ is a suitable subset of the set of all distribution functions $F$ for which both integrals on the right-hand side are finite. Note that distortion risk functionals associated with continuous concave distortion functions correspond to coherent distortion risk measures (cf.\ Example \ref{Example DR Functional}) which are of special interest in mathematical finance and actuarial mathematics. It was discussed in \cite{BeutnerZaehle2010} and \cite{Kraetschmeretal2013} that these functionals are typically  not Hadamard differentiable w.r.t.\ the usual sup-norm $\|\cdot\|_\infty$ but only quasi-Hadamard differentiable w.r.t.\ suitable nonuniform sup-norms $\|v\|_\phi:=\|v\phi\|_\infty$ stronger than $\|\cdot\|_\infty$ (i.e.\ with continuous weight functions $\phi:\R\rightarrow[1,\infty)$ satisfying $\lim_{|x|\to\infty}\phi(x)=\infty$). The functional delta-method in the form of Corollary \ref{modified delta method for the bootstrap - II} below then shows that $a_n(f_g(\widehat F_n^*)-f_g(\widehat F_n))$ has the same limiting distribution as $a_n(f_g(\widehat F_n)-f_g(F))$ whenever the bootstrapped empirical process $a_n(\widehat F_n^*-\widehat F_n)$ converges in distribution to the same limit as the empirical process $a_n(\widehat F_n-F)$. As ``differentiability'' can be obtained only for certain nonuniform sup-norms $\|\cdot\|_\phi$, the latter convergence in distribution has to be guaranteed for exactly these nonuniform sup-norms $\|\cdot\|_\phi$. Fortunately, such results can be easily obtained from Donsker results for appropriate classes of functions; see Sections \ref{bootstrap results for empirical processes - iid}--\ref{bootstrap results for empirical processes - beta mixing} 
for examples. So the notion of quasi-Hadamard differentiability together with the functional delta-method based on it provides an interesting field of applications for the bootstrap of Donsker classes. We emphasize that our approach leads in particular to new bootstrap results for empirical distortion risk measures based on $\beta$-mixing data; for details and other examples see Section \ref{bootstrap results for empirical processes - applications}.

It is worth recalling that the empirical process $a_n(\widehat F_n-F)$, regarded as a mapping from $\Omega$ to the nonseparable space of all bounded \cadlag\ functions equipped with the sup-norm, is not measurable w.r.t.\ the Borel $\sigma$-algebra. This problem was first observed by \citet{Chibisov1965} and carries over to nonuniform sup-norms. There are different ways to deal with this fact; for a respective discussion see, for instance, Section 1.1 in \cite{van der Vaart Wellner 1996}. One possibility is to use the concept of weak convergence (or convergence in distribution) in the Hoffmann-J{\o}rgensen sense; see, for instance, \cite{van der Vaart Wellner 1996}, \citet{Dudley1999}, \cite{Lahiri2003}, and \cite{Kosorok2007}. Another possibility is to use the open-ball $\sigma$-algebra w.r.t.\ which the empirical process is measurable. Here we work throughout with the open-ball $\sigma$-algebra and weak convergence (and convergence in distribution) as defined in \citet[Section 6]{Billingsley1999}; see also \citet{Dudley1966,Dudley1967}, \citet{Pollard1984}, and \cite{ShorackWellner1986}. This implies in particular that we have to take proper care of the measurability of the maps $a_n(\widehat T_n-\theta)$ and $a_n(\widehat T_n^*-\widehat T_n)$ for every $n \in \N$.

The rest of the article is organized as follows. In Section \ref{Auxiliaries} we briefly explain the setting chosen here and give some definitions that will be used throughout. The main result and its proof are presented in Sections \ref{Delta-Method for the bootstrap} and \ref{Proof of Abstract Delta Method}, respectively. Applications of our main result are given in Section \ref{application to statistical functionals} and illustrated in Section \ref{bootstrap results for empirical processes}. Additional definitions and results that are needed for our main result are given in the Appendix. The Appendix is organized as follows. In Sections \ref{sec weak conv for open ball} and \ref{appendix Conv Iin Dist and Prob} we give some results on weak convergence, convergence in distribution, and convergence in probability for the open-ball $\sigma$-algebra which are needed in Section \ref{appendix Functional Delta-Method}. In Section \ref{appendix Functional Delta-Method} we first present an extended Continuous Mapping theorem for convergence in distribution for the open-ball $\sigma$-algebra. This complements the extended Continuous Mapping theorems for weak convergence for the Borel $\sigma$-algebra and for convergence in distribution in the Hoffmann-J{\o}rgensen sense which are already known from the literature. In the second part of Section \ref{appendix Functional Delta-Method} we use the extended Continuous Mapping theorem to prove an extension (compared to Theorem 4.1 in \citet{BeutnerZaehle2010}) of the functional delta-method based on the notion of quasi-Hadamard differentiability. This extension is needed for the proof of our main result, i.e.\ for the proof of a functional delta-method for the bootstrap. Two results that ensure measurability of maps involved in our approach are given in Section \ref{section topology 30}.


\section{Basic definitions}\label{Auxiliaries}

In this section we introduce some notation and basic definitions. As mentioned in the introduction, weak convergence and convergence in distribution will always be considered for the open-ball $\sigma$-algebra. Borrowed from \citet[Section 6]{Billingsley1999} we will use the terminology {\em weak$^\circ$ convergence} (symbolically $\Rightarrow^\circ$) and {\em convergence in distribution$^\circ$} (symbolically $\leadsto^\circ$). For details see the Appendices \ref{sec weak conv for open ball} and \ref{appendix Conv Iin Dist and Prob}. In a separable metric space the notions of weak$^\circ$ convergence and convergence in distribution$^\circ$ boil down to the conventional notions of weak convergence and convergence in distribution for the Borel $\sigma$-algebra. In this case we also use the symbols $\Rightarrow$ and $\leadsto$ instead of $\Rightarrow^\circ$ and $\leadsto^\circ$, respectively.

Let $\V$ be a vector space and $\bE$ be a subspace of $\V$. Let $\|\cdot\|_{\bE}$ be a norm on $\bE$ and ${\cal B}^\circ$ be the corresponding open-ball $\sigma$-algebra on $\bE$.
Let $(\Omega,{\cal F},\pr)$ be a probability space, and $(\widehat T_n)$ be a sequence of maps
$$
    \widehat T_n:\Omega\longrightarrow\V.
$$
Regard $\omega\in\Omega$ as a sample drawn from $\pr$, and $\widehat T_n(\omega)$ as a statistic derived from $\omega$. Let $\theta\in\V$, and $(a_n)$ be a sequence of positive real numbers tending to $\infty$. Assume that $a_n(\widehat T_n-\theta)$ takes values only in $\bE$ and is $({\cal F},{\cal B}^\circ)$-measurable for every $n\in\N$, and that
\begin{equation}\label{abstract bootstrap - eq - 10}
    a_n(\widehat T_n-\theta)\,\leadsto^\circ\,\xi\qquad\mbox{in $(\bE,{\cal B}^\circ,\|\cdot\|_{\bE})$}
\end{equation}
for some $(\bE,{\cal B}^\circ)$-valued random variable $\xi$.

Now, let $(\Omega',{\cal F}',\pr')$ be another probability space and set
$$
    (\overline\Omega,\overline{\cal F},\overline{\pr}):=(\Omega\times\Omega',{\cal F}\otimes{\cal F}',\pr\otimes\pr').
$$
The probability measure $\pr'$ represents a random experiment that is run independently of the random sample mechanism $\pr$. In the sequel, $\widehat T_n$ will frequently be regarded as a map defined on the extension $\overline\Omega$ of $\Omega$. Let
$$
    \widehat T_n^*: \overline\Omega\longrightarrow\V
$$
be any map and assume that $a_n(\widehat T_n^*-\widehat T_n)$ takes values only in $\bE$ and is $(\overline{\cal F},{\cal B}^\circ)$-measurable for every $n\in\N$. Since $\widehat T_n^*(\omega,\omega')$ depends on both the original sample $\omega$ and the outcome $\omega'$ of the additional independent random experiment, we may regard $\widehat T_n^*$ as a bootstrapped version of $\widehat T_n$. For the formula display (\ref{abstract bootstrap - definition - almost surely - eq}) in the following Definition \ref{abstract bootstrap - definition - almost surely},  note that the mapping $\omega'\mapsto a_n(\widehat T_n^*(\omega,\omega')-\widehat T_n(\omega))$ is $({\cal F}',{\cal B}^\circ)$-measurable for every fixed $\omega\in\Omega$, because $a_n(\widehat T_n^*-\widehat T_n)$ is $(\overline{\cal F},{\cal B}^\circ)$-measurable with $\overline{\cal F}={\cal F}\otimes{\cal F}'$. That is, $a_n(\widehat T_n^*(\omega,\cdot)-\widehat T_n(\omega))$ can be seen as an $(\bE,{\cal B}^\circ)$-valued random variable on $(\Omega',{\cal F}',\pr')$ for every fixed $\omega\in\Omega$.

\begin{definition}\label{abstract bootstrap - definition - almost surely}
{\bf (Bootstrap version almost surely)} We say that $(\widehat T_n^*)$ is almost surely a bootstrap version of $(\widehat T_n)$ w.r.t.\ the convergence in (\ref{abstract bootstrap - eq - 10}) if
\begin{equation}\label{abstract bootstrap - definition - almost surely - eq}
    a_n(\widehat T_n^*(\omega,\cdot)-\widehat T_n(\omega))\,\leadsto^\circ\,\xi\qquad\mbox{in $(\bE,{\cal B}^\circ,\|\cdot\|_{\bE})$},\qquad\mbox{$\pr$-a.e.\ $\omega$}.
\end{equation}
\end{definition}

Next we intend to introduce the notion of bootstrap version in (outer) probability. To this end let the map $P_n:\overline\Omega\times{\cal B}^\circ\rightarrow[0,1]$ be defined by
\begin{equation}\label{def of cond dist empirical distance}
    P_n((\omega,\omega'),A):=P_n(\omega,A):=\pr'\circ\{a_n(\widehat T_n^*(\omega,\cdot)-\widehat T_n(\omega))\}^{-1}[A],\quad (\omega,\omega')\in\overline\Omega,\,A\in{\cal B}^\circ.
\end{equation}
It provides a conditional distribution of $a_n(\widehat T_n^*-\widehat T_n)$ given $\Pi$, where the $(\overline{\cal F},{\cal F})$-measurable map $\Pi:\overline{\Omega}\rightarrow\Omega$ is defined by
\begin{equation}\label{Def Pi}
    \Pi(\omega,\omega'):=\omega.
\end{equation}
This follows from Lemma \ref{reresentation of P n} (with $X(\omega,\omega')=g(\omega,\omega')=a_n(\widehat T_n^*(\omega,\omega')-\widehat T_n(\omega))$ and $Y=\Pi$). Informally, $\Pi(\omega,\omega')$ specifies that part of the realization $(\omega,\omega')$ of the extended random mechanism $\pr\otimes\pr'$ that represents the ``observed data''; see also Remark \ref{remark on P n} below and the discussion preceding it. By definition $P_n$ is a probability kernel from $(\overline{\Omega},\sigma(\Pi))$ to $(\bE,{\cal B}^\circ)$. However, it is directly clear from (\ref{def of cond dist empirical distance}) that $P_n$ can also be seen as a probability kernel from $(\Omega,{\cal F})$ to $(\bE,{\cal B}^\circ)$.

Let $d_{\scriptsize{\rm BL}}^\circ$ denote the bounded Lipschitz distance (defined in (\ref{def bl metric - add on}) in the Appendix \ref{sec weak conv for open ball}) on the set ${\cal M}_1^\circ$ of all probability measures on $(\bE,{\cal B}^\circ)$. Note that a sequence $(\mu_n)\subseteq{\cal M}_1^\circ$ converges weak$^\circ$ly to some $\mu_0\in{\cal M}_1^\circ$ which concentrates on a separable set, if and only if $d_{\scriptsize{\rm BL}}^\circ(\mu_n,\mu_0)\rightarrow 0$; cf.\ Theorem \ref{Portemanteau}. In general the mapping $\omega\mapsto d_{\scriptsize{\rm BL}}^\circ(P_n(\omega,\cdot),\mbox{\rm law}\{\xi\})$ is not necessarily $({\cal F},{\cal B}(\R_+))$-measurable. For this reason we have to use the outer probability in (\ref{abstract bootstrap - definition - in outer probability - eq}). Recall that the outer probability $\pr^{\scriptsize{\sf out}}[S]$ of an arbitrary subset $S\subseteq\Omega$ is defined to be the infimum of $\pr[\overline{S}]$ over all $\overline{S}\in{\cal F}$ with $\overline{S}\supseteq S$.

\begin{definition}\label{abstract bootstrap - definition - in outer probability}
{\bf (Bootstrap version in (outer) probability)} We say that $(\widehat T_n^*)$ is a bootstrap version in outer probability of $(\widehat T_n)$ w.r.t.\ the convergence in (\ref{abstract bootstrap - eq - 10}) if
\begin{equation}\label{abstract bootstrap - definition - in outer probability - eq}
    \lim_{n\to\infty}\pr^{\scriptsize{\sf out}}\big[\big\{\omega\in\Omega:\,d_{\scriptsize{\rm BL}}^\circ(P_n(\omega,\cdot),\mbox{\rm law}\{\xi\})\ge\delta\big\}\big]=\,0\quad\mbox{ for all }\delta>0.
\end{equation}
When $(\bE,\|\cdot\|_{\bE}$) is separable, we may replace in (\ref{abstract bootstrap - definition - in outer probability - eq}) the outer probability $\pr^{\scriptsize{\sf out}}$ by the ordinary probability $\pr$ and we will say that $(\widehat T_n^*)$ is a bootstrap version in probability of $(\widehat T_n)$ w.r.t.\ the convergence in (\ref{abstract bootstrap - eq - 10}).
\end{definition}

The second part of Definition \ref{abstract bootstrap - definition - in outer probability} can be justified as follows. The assumed separability of $(\bE,\|\cdot\|_{\bE})$ implies that ${\cal M}_1^\circ$ is just the set ${\cal M}_1$ of all Borel probability measures on $\bE$ and that $\omega\mapsto P_n(\omega,\cdot)$ can be seen as an $({\cal F},\sigma({\cal O}_{\rm w}))$-measurable mapping from $\Omega$ to ${\cal M}_1$ (cf.\ Lemma \ref{topology 200}); here ${\cal O}_{\rm w}$ refers to the weak topology on ${\cal M}_1$ (cf.\ Remark \ref{remark appendix B coincide if separable}). By the reverse triangle inequality for metrics we also have that the mapping $\mu\mapsto d_{\scriptsize{\rm BL}}(\mu,\mbox{\rm law}\{\xi\})$ is continuous (recall that $d_{\scriptsize{\rm BL}}:=d_{\scriptsize{\rm BL}}^\circ$ is a metric when $(\bE,\|\cdot\|_{\bE})$ is separable) and thus $(\sigma({\cal O}_{\rm w}),{\cal B}(\R_+))$-measurable. It follows that the mapping $\omega\mapsto d_{\scriptsize{\rm BL}}(P_n(\omega,\cdot),\mbox{\rm law}\{\xi\})$ is $({\cal F},{\cal B}(\R_+))$-measurable.

As our interest lies in deriving bootstrap results for functionals $f$ of $\widehat{T}_n^*$ from bootstrap results for $\widehat{T}_n^*$ itself, we introduce some more notation and restate Definition \ref{abstract bootstrap - definition - in outer probability} for $f(\widehat{T}_n^*)$. Let $(\widetilde{\bE},\|\cdot\|_{\widetilde{\bE}})$ be another normed vector space and assume that $\|\cdot\|_{\widetilde{\bE}}$ is separable. In particular, the open-ball $\sigma$-algebra coincides with the Borel $\sigma$-algebra $\widetilde{\cal B}$ on $\widetilde{\bE}$. Denote by $\widetilde{\cal M}_1$ the set of all probability measures on $(\widetilde{\bE},\widetilde{\cal B})$. Let
$$
    f:\V_f\longrightarrow\widetilde{\bE}
$$
be any map defined on some subset $\V_f\subseteq\V$. Assume that $\widehat T_n$ and $\widehat T_n^*$ take values only in $\V_f$ and that $a_n(f(\widehat T_n^*)-f(\widehat T_n))$ is $(\overline{\cal F},\widetilde{\cal B})$-measurable. Moreover let the map $\widetilde P_n:\overline\Omega\times\widetilde{\cal B}\rightarrow[0,1]$ be defined by
\begin{equation}\label{def of cond dist empirical f distance}
    \widetilde P_n((\omega,\omega'),A):=\widetilde P_n(\omega,A):=\pr'\circ\{a_n(f(\widehat T_n^*(\omega,\cdot))-f(\widehat T_n(\omega)))\}^{-1}[A],~ (\omega,\omega')\in\overline\Omega,\, A\in\widetilde{\cal B}.
\end{equation}
It provides a conditional distribution of $a_n(f(\widehat T_n^*)-f(\widehat T_n))$ given $\Pi$, where $\Pi$ is as in (\ref{Def Pi}). This follows from Lemma \ref{reresentation of P n} (with $X(\omega,\omega')=g(\omega,\omega')=a_n(f(\widehat T_n^*(\omega,\omega'))-f(\widehat T_n(\omega)))$ and $Y=\Pi$). By definition $\widetilde P_n$ is a probability kernel from $(\overline{\Omega},\sigma(\Pi_n))$ to $(\widetilde\bE,\widetilde{\cal B})$. However, it is directly clear from (\ref{def of cond dist empirical f distance}) that $\widetilde P_n$ can also be seen as a probability kernel from $(\Omega,{\cal F})$ to $(\widetilde\bE,\widetilde{\cal B})$. Finally assume that
\begin{equation}\label{abstract bootstrap - eq - 20}
    a_n(f(\widehat T_n)-f(\theta))\,\leadsto\,\widetilde\xi\qquad\mbox{in $(\widetilde{\bE},\widetilde{\cal B},\|\cdot\|_{\widetilde{\bE}})$}
\end{equation}
for some $(\widetilde{\bE},\widetilde{\cal B})$-valued random variable $\widetilde{\xi}$ 
and let $\widetilde d_{\scriptsize{\rm BL}}$ denote the bounded Lipschitz distance on $\widetilde{\cal M}_1$ as defined in (\ref{def bl metric - add on}).

\begin{definition}\label{abstract bootstrap - definition - in outer probability f}
{\bf (Bootstrap version in probability)} We say that $(f(\widehat T_n^*))$ is a bootstrap version in probability of $(f(\widehat T_n))$ w.r.t.\ the convergence in (\ref{abstract bootstrap - eq - 20}) if
\begin{equation}\label{abstract bootstrap - definition - in outer probability f - eq}
    \lim_{n\to\infty}\pr\big[\big\{\omega\in\Omega:\,\widetilde d_{\scriptsize{\rm BL}}(\widetilde P_n(\omega,\cdot),\mbox{\rm law}\{\widetilde\xi\})\ge\delta\big\}\big]=\,0\quad\mbox{ for all }\delta>0.
\end{equation}
\end{definition}

Note that the mapping $\omega\mapsto\widetilde d_{\scriptsize{\rm BL}}(\widetilde P_n(\omega,\cdot),\mbox{\rm law}\{\widetilde\xi\})$ is $({\cal F},{\cal B}(\R_+))$-measurable. Indeed, one can argue as subsequent to Definition \ref{abstract bootstrap - definition - in outer probability}, because we assumed that $(\widetilde\bE,\|\cdot\|_{\widetilde\bE})$ is separable.

\begin{remarknorm}
Note that (\ref{abstract bootstrap - definition - in outer probability f - eq}) implies that (\ref{abstract bootstrap - definition - in outer probability f - eq}) still holds when the bounded Lipschitz distance $\widetilde d_{\scriptsize{\rm BL}}$ is replaced by any other metric on $\widetilde{\cal M}_1$ which generates the weak topology. When $(\bE,\|\cdot\|_{\bE})$ is separable, then the same is true for (\ref{abstract bootstrap - definition - in outer probability - eq}).
{\hspace*{\fill}$\Diamond$\par\bigskip}
\end{remarknorm}

We conclude this section with some comments on the probability kernel $P_n$ defined in (\ref{def of cond dist empirical distance}). As mentioned above, it is a conditional distribution of $a_n(\widehat T_n^*-\widehat T_n)$ given $\Pi$, where to some extent $\Pi(\omega,\omega')=\omega$ can be seen as the ``observable'' sample. On the other hand, for technical reasons the sample space $\Omega$ is often so complex so that only a portion $\Pi_n(\omega)$ of an element $\omega\in\Omega$ can indeed be ``observed''. For instance, when the sample space is an infinite product space, i.e.\ $(\Omega,{\cal F})=(S^\N,{\cal S}^{\otimes\N})$ for some measurable space $(S,{\cal S})$, then de facto one can only observe a finite-dimensional sample, say the first $n$ coordinates $(\omega_1,\ldots,\omega_n)$ of the infinite-dimensional sample $\omega=(\omega_1,\omega_2,\ldots)\in S^\N$. In this case it is obviously appealing to interpret $P_n$ as a conditional distribution of $a_n(\widehat T_n^*-\widehat T_n)$ given $\Pi_n$, where $\Pi_n:S^\N\times\Omega'\rightarrow S^n$ is given by
\begin{equation}\label{Def Pi n}
   \Pi_n((\omega_1,\omega_2,\ldots),\omega'):=(\omega_1,\ldots,\omega_n).
\end{equation}
Under additional mild assumptions this is indeed possible. This follows from the next Remark \ref{remark on P n} if we take there $\Pi_n$ as given in (\ref{Def Pi n}) and $(\Omega^{(n)},{\cal F}^{(n)})$ equal to $(S^n,{\cal S}^{\otimes n})$. Analogously one can regard $\widetilde P_n$ defined in (\ref{def of cond dist empirical f distance}) as a conditional distribution of $a_n(f(\widehat T_n^*)-f(\widehat T_n))$ given $\Pi_n$.

\begin{remarknorm}\label{remark on P n}
Let $(\Omega^{(n)},{\cal F}^{(n)})$ be a measurable space and $\Pi_n:\overline\Omega\rightarrow\Omega^{(n)}$ be an $(\overline{\cal F},{\cal F}^{(n)})$-measurable map for every $n\in\N$. Assume that for every $n\in\N$ the value $\Pi_n(\omega,\omega')$ depends only on $\omega$ and that there exist maps $\tau_n:\Omega^{(n)}\rightarrow\V$ and $\tau_n^*:\Omega^{(n)}\times\Omega'\rightarrow\V$ such that
\begin{equation}\label{def of tau star}
    \tau_n(\Pi_n(\omega,\omega'))=\widehat T_n(\omega)\quad\mbox{ and }\quad\tau_n^*(\Pi_n(\omega,\omega'),\omega')=\widehat T_n^{*}(\omega,\omega')\quad\mbox{ for all }\omega\in\Omega,\,\omega'\in\Omega'
\end{equation}
and
\begin{equation}\label{def of g n}
    g_n(\omega^{(n)},\omega'):=a_n\big(\tau_n^*(\omega^{(n)},\omega')-\tau_n(\omega^{(n)})\big),\qquad(\omega^{(n)},\omega')\in\Omega^{(n)}\times\Omega'
\end{equation}
provides an $({\cal F}^{(n)}\otimes{\cal F}',{\cal B}^\circ)$-measurable map $g_n:\Omega^{(n)}\times\Omega'\rightarrow\bE$. (This implies in particular that $a_n(\widehat T_n^*-\widehat T_n)$ takes values only in $\bE$ and is $(\overline{\cal F},{\cal B}^\circ)$-measurable). Then the map $P_n:\overline\Omega\times{\cal B}^\circ\rightarrow[0,1]$ defined by (\ref{def of cond dist empirical distance}) provides a conditional distribution of $a_n(\widehat T_n^*-\widehat T_n)$ given $\Pi_n$. This follows again from Lemma \ref{reresentation of P n} (with $X(\omega,\omega')=a_n(\widehat T_n^*(\omega,\omega')-\widehat T_n(\omega))$, $Y=\Pi_n$, and $g=g_n$).
{\hspace*{\fill}$\Diamond$\par\bigskip}
\end{remarknorm}


\section{Abstract delta-method for the bootstrap}\label{Delta-Method for the bootstrap}

Theorem \ref{modified delta method for the bootstrap} below establishes an abstract delta-method for the bootstrap for quasi-Hadamard differentiable maps. It uses the notation and definitions introduced in Section \ref{Auxiliaries}. More precisely, let $\V$, $(\bE,\|\cdot\|_\bE)$, $(\Omega,{\cal F},\pr)$, $(\Omega',{\cal F}',\pr')$, $(\overline{\Omega},\overline{{\cal F}},\overline{\pr})$, $\widehat T_n$, $\widehat T_n^*$, $P_n$, $f$, $\V_f$, $(\widetilde\bE,\|\cdot\|_{\widetilde\bE})$, and $\widetilde P_n$ be as in Section \ref{Auxiliaries}. As before assume that $(\widetilde\bE,\|\cdot\|_{\widetilde\bE})$ is separable, and that $\widehat T_n$ and $\widehat T_n^*$ take values only in $\V_f$.

As already discussed in the introduction, in statistical applications the role of $\widehat T_n$ is often played by the empirical distribution function of $n$ identically distributed random variables (sample), so that the plug-in estimator $f(\widehat T_n)$ can be represented as a function of the sample. This special case will be studied in detail in Section \ref{application to statistical functionals}. Due to the measurability problems discussed in the introduction we work with the open-ball $\sigma$-algebra ${\cal B}^\circ$ in our general setting. This is different from the conventional functional delta-method for the bootstrap in the form of \mbox{\citet[Theorems 3.9.11 and 3.9.13]{van der Vaart Wellner 1996}} where the measurability problem is overcome by using the concept of convergence in distribution in the Hoffmann-J{\o}rgensen sense. Moreover, compared to the conventional functional delta-method we work with a weaker notion of differentiability, namely with {\em quasi}-Hadamard differentiability. This kind of differentiability was introduced by \mbox{\cite{BeutnerZaehle2010}} and is recalled in Definition \ref{definition quasi hadamard} in the Appendix.

\begin{theorem}\label{modified delta method for the bootstrap}
{\bf (Delta-method for the bootstrap)} Let $\theta\in\V_f$. Let $\bE_0\subseteq\bE$ be a separable subspace and assume that $\bE_0\in {\cal B}^{\circ}$. Let $(a_n)$ be a sequence of positive real numbers tending to $\infty$, and consider the following conditions:
\begin{itemize}
    \item[(a)] $a_n(\widehat T_n-\theta)$ takes values only in $\bE$, is $({\cal F},{\cal B}^{\circ})$-measurable, and satisfies
    \begin{equation}\label{modified delta method for the bootstrap - assumption - 10}
         a_n(\widehat T_n-\theta)\,\leadsto^\circ\,\xi\qquad\mbox{in $(\bE,{\cal B}^{\circ},\|\cdot\|_{\bE})$}
    \end{equation}
    for some $(\bE,{\cal B}^{\circ})$-valued random variable $\xi$ on some probability space $(\Omega_0,{\cal F}_0,\pr_0)$ with $\xi(\Omega_0)\subseteq\bE_0$.
    \item[(b)] The map $f(\widehat T_n):\Omega\rightarrow\widetilde\bE$ is $({\cal F},\widetilde{\cal B})$-measurable.
    \item[(c)] The map $f$ is quasi-Hadamard differentiable at $\theta$ tangentially to $\bE_0\langle\bE\rangle$ with quasi-Hadamard derivative $\dot f_\theta$ in the sense of Definition \ref{definition quasi hadamard}.
    \item[(d)] The quasi-Hadamard derivative $\dot f_\theta$ can be extended from $\bE_0$ to $\bE$ such that the extension $\dot f_\theta:\bE\rightarrow\widetilde{\bE}$ is linear and $({\cal B}^{\circ},\widetilde{\cal B})$-measurable. Moreover, the extension $\dot f_\theta:\bE\rightarrow\widetilde{\bE}$ is continuous at every point of $\bE_0$.
    \item[(e)] The map $f(\widehat T_{n}^*):\overline\Omega\rightarrow\widetilde\bE$ is $(\overline{\cal F},\widetilde{\cal B})$-measurable.
    \item[(f)] $a_n(\widehat T_n^*-\theta)$ and $a_n(\widehat T_n^{*}-\widehat T_n)$ take values only in $\bE$ and are $(\overline{\cal F},{\cal B}^\circ)$-measurable, and $(\widehat T_n^{*})$ is almost surely a bootstrap version of $(\widehat T_n)$ w.r.t.\ the convergence in (\ref{modified delta method for the bootstrap - assumption - 10}) in the sense of Definition \ref{abstract bootstrap - definition - almost surely}. The latter means that
        \begin{equation}\label{modified delta method for the bootstrap - assumption - 15}
            a_n(\widehat T_n^{*}(\omega,\cdot)-\widehat T_n(\omega))\,\leadsto^\circ\,\xi\qquad\mbox{in $(\bE,\mathcal{B}^{\circ},\|\cdot\|_{\bE})$},\qquad\mbox{$\pr$-a.e.\ $\omega$}.
        \end{equation}
    \item[(f')] $a_n(\widehat T_n^*-\theta)$ and $a_n(\widehat T_n^{*}-\widehat T_n)$ take values only in $\bE$ and are $(\overline{\cal F},{\cal B}^\circ)$-measurable, and $(\widehat T_n^{*})$ is a bootstrap version in outer probability of $(\widehat T_n)$ w.r.t.\ the convergence in (\ref{modified delta method for the bootstrap - assumption - 10}) in the sense of Definition \ref{abstract bootstrap - definition - in outer probability}. The latter means that
        \begin{equation}\label{modified delta method for the bootstrap - assumption - 18}
            \lim_{n\to\infty}\pr^{\scriptsize{\sf out}}\big[\big\{\omega\in\Omega:\,d_{\scriptsize{\rm BL}}^\circ(P_n(\omega,\cdot),\mbox{\rm law}\{\xi\})\ge\delta\big\}\big]=\,0\quad\mbox{ for all }\delta>0.
        \end{equation}
\end{itemize}
Then the following assertions hold:
\begin{itemize}
    \item[(i)] If conditions (a)--(c) hold, then $a_n(f(\widehat T_n)-f(\theta))$ and $\dot f_\theta(\xi)$ are respectively $({\cal F},\widetilde{\cal B})$- and $({\cal F}_0,\widetilde{\cal B})$-measurable, and
    \begin{equation}\label{modified delta method for the bootstrap - assumption - 30}
        a_n(f(\widehat T_n)-f(\theta))\,\leadsto\,\dot f_\theta(\xi)\qquad\mbox{in $(\widetilde\bE,\widetilde{\cal B},\|\cdot\|_{{\widetilde\bE}})$}.
    \end{equation}
    \item[(ii)] If conditions (a)--(f) hold, then $a_n(f(\widehat T_n^*)-f(\widehat T_n))$ and $\dot f_\theta(\xi)$ are respectively $(\overline{\cal F},\widetilde{\cal B})$- and $({\cal F}_0,\widetilde{\cal B})$-measurable, and $(f(\widehat T_n^*))$ is a bootstrap version in probability of $(f(\widehat T_n))$ w.r.t.\ the convergence in (\ref{modified delta method for the bootstrap - assumption - 30}) in the sense of Definition \ref{abstract bootstrap - definition - in outer probability f}. The latter means that
        \begin{equation}\label{modified delta method for the bootstrap - assumption - 40}
            \lim_{n\to\infty}\pr\big[\big\{\omega\in\Omega:\,\widetilde{d}_{\scriptsize{\rm BL}}\big(\widetilde P_n(\omega,\cdot),\mbox{\rm law}\{\dot f_\theta(\xi)\}\big)\ge\delta\big\}\big]=\,0\quad\mbox{ for all }\delta>0.
        \end{equation}
    \item[(iii)]  Assertion (ii) still holds when assumption (f) is replaced by (f').
\end{itemize}
\end{theorem}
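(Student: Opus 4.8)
The plan is to deduce (iii) from assertion (ii) by a subsequence argument that upgrades the convergence in outer probability assumed in (f') to the almost-sure weak$^\circ$ convergence required by (f) along suitable subsequences. Observe first that the conclusion (\ref{modified delta method for the bootstrap - assumption - 40}) is exactly the statement that the nonnegative, $({\cal F},{\cal B}(\R_+))$-measurable variables $Y_n(\omega):=\widetilde d_{\scriptsize{\rm BL}}(\widetilde P_n(\omega,\cdot),\mbox{\rm law}\{\dot f_\theta(\xi)\})$ converge to $0$ in $\pr$-probability (their measurability was recorded after Definition \ref{abstract bootstrap - definition - in outer probability f}). I would therefore invoke the standard subsequence criterion: $Y_n\to 0$ in $\pr$-probability if and only if every subsequence $(n')$ admits a further subsequence $(n'')$ with $Y_{n''}\to 0$ for $\pr$-a.e.\ $\omega$. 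It thus suffices to fix an arbitrary subsequence $(n')$ and construct such an $(n'')$.

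Next I exploit (f'). Writing $X_n(\omega):=d_{\scriptsize{\rm BL}}^\circ(P_n(\omega,\cdot),\mbox{\rm law}\{\xi\})$, condition (\ref{modified delta method for the bootstrap - assumption - 18}) says $X_n\to 0$ in outer probability, and this persists along $(n')$. Hence I can select a subsequence $(m_k)\subseteq(n')$ with $\pr^{\scriptsize{\sf out}}[X_{m_k}\ge 1/k]\le 2^{-k}$; replacing each exceptional set $\{X_{m_k}\ge 1/k\}$ by a measurable cover (so that its $\pr$-measure equals the outer probability above) and applying Borel--Cantelli gives $X_{m_k}\to 0$ for $\pr$-a.e.\ $\omega$. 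Since $\mbox{\rm law}\{\xi\}$ concentrates on the separable subspace $\bE_0$ by (a), the equivalence recalled just before Theorem \ref{Portemanteau} identifies $X_{m_k}(\omega)\to 0$ with the convergence in distribution$^\circ$ $a_{m_k}(\widehat T_{m_k}^*(\omega,\cdot)-\widehat T_{m_k}(\omega))\leadsto^\circ\xi$. Consequently the almost-sure bootstrap statement (\ref{modified delta method for the bootstrap - assumption - 15}), i.e.\ condition (f), holds along the subsequence $(m_k)$.

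Finally I apply assertion (ii) along $(m_k)$. All of its hypotheses are stable under passing to a subsequence: (b)--(e) are index-free, condition (a) holds along $(m_k)$ because $a_{m_k}(\widehat T_{m_k}-\theta)\leadsto^\circ\xi$ is inherited from (\ref{modified delta method for the bootstrap - assumption - 10}), and (f) along $(m_k)$ was just established, while the kernels $P_{m_k},\widetilde P_{m_k}$ and the definitions they enter are defined index by index. Hence (ii) yields $Y_{m_k}\to 0$ in $\pr$-probability, and one further extraction produces $(n'')\subseteq(m_k)$ with $Y_{n''}\to 0$ for $\pr$-a.e.\ $\omega$. This is precisely what the subsequence criterion demands, so $Y_n\to 0$ in $\pr$-probability and (iii) follows.

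I expect the only genuine obstacle to lie in the second step: converting the outer-probability hypothesis (f') into honest $\pr$-almost-sure convergence of $X_{m_k}$ despite the possible non-measurability of $X_n$ (this non-measurability is exactly why (f') is phrased with $\pr^{\scriptsize{\sf out}}$). One must pass to measurable majorants of the exceptional sets, check that the Borel--Cantelli bound still controls a genuine $\pr$-null set, and then rely on the $d_{\scriptsize{\rm BL}}^\circ$/weak$^\circ$ equivalence, which is available only because the limiting law sits on the separable space $\bE_0$ and because $\widetilde\bE$ is separable. Once this translation is in place, the remaining steps are routine bookkeeping showing that every hypothesis of (ii) survives restriction to a subsequence.
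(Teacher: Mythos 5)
Your proposal addresses only assertion (iii), and it does so by \emph{assuming} assertion (ii); nowhere do you prove (i) or (ii), which carry essentially all of the content of the theorem. The paper's proof of (ii) is the substantive part of the argument: it introduces the intermediate kernel $\widetilde Q_n(\omega,\cdot)=\pr'\circ\{\dot f_\theta(a_n(\widehat T_n^*(\omega,\cdot)-\widehat T_n(\omega)))\}^{-1}$, splits $\widetilde{d}_{\scriptsize{\rm BL}}(\widetilde P_n(\omega,\cdot),\mbox{\rm law}\{\dot f_\theta(\xi)\})$ into the two pieces $\widetilde{d}_{\scriptsize{\rm BL}}(\widetilde P_n,\widetilde Q_n)$ and $\widetilde{d}_{\scriptsize{\rm BL}}(\widetilde Q_n,\mbox{\rm law}\{\dot f_\theta(\xi)\})$, controls the first via a chain of Markov/Fubini estimates that reduces it to the convergence in probability $a_n(f(\widehat T_n^*)-f(\widehat T_n))-\dot f_\theta(a_n(\widehat T_n^*-\widehat T_n))\rightarrow^{\sf p}0_{\widetilde\bE}$, and establishes the latter by proving joint convergence in distribution$^\circ$ of the pair $(a_n(\widehat T_n-\theta),a_n(\widehat T_n^*-\widehat T_n))$ and then applying the extended Continuous Mapping theorem \ref{Skorohod - CMT} and the delta-method Theorem \ref{modified delta method} twice. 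None of this machinery appears in your proposal, so as a proof of the stated theorem it has a genuine and large gap.

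That said, the part you do prove is correct, and your route to (iii) is genuinely different from (and arguably cleaner than) the paper's. The paper re-runs the proof of (ii) under (f'), replacing the Dominated Convergence step for $S_1(n)$ by an outer-integral argument (Lemma 3.3.4 of Dudley) and bounding $\widetilde{d}_{\scriptsize{\rm BL}}(\widetilde Q_n(\omega,\cdot),\mbox{\rm law}\{\dot f_\theta(\xi)\})$ directly by $(L_{f,\theta}\vee1)\,d_{\scriptsize{\rm BL}}^\circ(P_n(\omega,\cdot),\mbox{\rm law}\{\xi\})$ using the Lipschitz continuity of the extended derivative. You instead observe that (f') implies (f) along a subsequence of any given subsequence: pick measurable covers $\overline S_k\supseteq\{d_{\scriptsize{\rm BL}}^\circ(P_{m_k}(\cdot,\cdot),\mbox{\rm law}\{\xi\})\ge 1/k\}$ with summable measures, apply Borel--Cantelli, and use the equivalence (a)$\Leftrightarrow$(g) of Theorem \ref{Portemanteau} (valid since $\mbox{\rm law}\{\xi\}$ sits on the separable $\bE_0$) to recover (\ref{modified delta method for the bootstrap - assumption - 15}) along $(m_k)$; then (ii) applied to the re-indexed sequence plus the subsequence criterion for convergence in probability of the measurable variables $Y_n$ gives (\ref{modified delta method for the bootstrap - assumption - 40}). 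All hypotheses of (ii) are indeed stable under re-indexing, and the measurability of $Y_n$ needed for the subsequence criterion is secured by the separability of $\widetilde\bE$, as recorded after Definition \ref{abstract bootstrap - definition - in outer probability f}. So your argument would be a legitimate alternative proof of (iii) --- but only once (i) and (ii) have actually been established.
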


Recall that $(\bE,\|\cdot\|_\bE)$ was {\em not} assumed to be separable, so that the mapping $\omega\mapsto d_{\scriptsize{\rm BL}}^\circ(P_n(\omega,\cdot),\mbox{\rm law}\{\xi\})$ is not necessarily $({\cal F},{\cal B}(\R_+))$-measurable. Further note that the Counterexample 1.9.4 in \cite{van der Vaart Wellner 1996} (where $\pr^{\scriptsize{\sf out}}[|\xi_n-0|\ge\delta]=\pr[\eins_{B_n}^{\scriptsize{\sf out}}\ge\delta]=1$ obviously holds for every $n\in\N$ and $\delta\in(0,1)$, with $\xi_n:=\eins_{B_n}$) shows that in general $\pr$-a.s.\ convergence of a sequence $(\xi_n)$ of {\em non}-$({\cal F},{\cal B}(\R))$-measurable functions $\xi_n:\Omega\rightarrow\R$ does not imply convergence in outer probability of $(\xi_n)$. In particular it is not clear to us whether or not condition (f) implies condition (f'). For that reason we consider both conditions separately.

Note that in contrast to the conventional functional delta-method in the form of \mbox{\citet[Theorems 3.9.11 and 3.9.13]{van der Vaart Wellner 1996}} condition (a) of Theorem \ref{modified delta method for the bootstrap} does not involve convergence in distribution in the Hoffmann-J{\o}rgensen sense (based on outer integrals) and condition (f) of Theorem \ref{modified delta method for the bootstrap} does not involve the concept of convergence in outer probability. Thus assertion (ii) of Theorem \ref{modified delta method for the bootstrap} shows in particular that a comprehensive version of the functional delta-method for the bootstrap can be stated without using the concepts of outer integrals and outer probabilities.  Indeed, (part (ii) of) Theorem \ref{modified delta method for the bootstrap} in the form of (part (ii) of) Corollary \ref{modified delta method for the bootstrap - II} below (together with Lemmas \ref{measurability of tau n *} and \ref{measurability of tau n * - beta mixing}) covers plenty of classical plug-in estimators.


\section{Application to plug-in estimators of statistical functionals}\label{application to statistical functionals}

Let $\bD$ be the space of all \cadlag\ functions $v$ on $\R$ with finite sup-norm $\|v\|_\infty:=\sup_{t\in\R}|v(t)|$, and ${\cal D}$ be the $\sigma$-algebra on $\bD$ generated by the one-dimensional coordinate projections $\pi_t$, $t\in\R$, given by $\pi_t(v):=v(t)$. Let $\phi:\R\rightarrow[1,\infty)$ be a weight function, i.e.\ a continuous function being non-increasing  on $(-\infty,0]$ and non-decreasing on $[0,\infty)$. Let $\bD_{\phi}$ be the subspace of $\bD$ consisting of all $x\in\bD$ satisfying $\|x\|_\phi:=\|x\phi\|_\infty<\infty$ and $\lim_{|t|\to\infty} |x(t)|=0$. The latter condition automatically holds when $\lim_{|t|\to\infty} \phi(t)=\infty$. Let ${\cal D}_{\phi}:={\cal D}\cap\bD_{\phi}$ be the trace $\sigma$-algebra on $\bD_{\phi}$. The $\sigma$-algebra on $\bD_{\phi}$ generated by the $\|\cdot\|_\phi$-open balls will be denoted by ${\cal B}_{\phi}^\circ$. The following lemma shows that it coincides with ${\cal D}_{\phi}$.

\begin{lemma}\label{lemma proj eq ball}
${\cal D}_{\phi}={\cal B}_{\phi}^\circ$.
\end{lemma}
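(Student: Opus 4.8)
The plan is to prove the two inclusions separately; the inclusion ${\cal B}_\phi^\circ\subseteq{\cal D}_\phi$ is routine, while ${\cal D}_\phi\subseteq{\cal B}_\phi^\circ$ is the real content.

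For ${\cal B}_\phi^\circ\subseteq{\cal D}_\phi$ I would show that every $\|\cdot\|_\phi$-open ball lies in ${\cal D}_\phi$. Fix a center $x_0\in\bD_\phi$ and a radius $r>0$, and consider, for $x\in\bD_\phi$, the function $h(t):=|x(t)-x_0(t)|\,\phi(t)$. Since $x-x_0$ is \cadlag\ and $\phi$ is continuous, $h$ is \cadlag, so by right-continuity $h(t)=\lim_{s\downarrow t,\,s\in\Q}h(s)$ for every $t$, whence $\|x-x_0\|_\phi=\sup_{t\in\R}h(t)=\sup_{q\in\Q}|x(q)-x_0(q)|\,\phi(q)$. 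Each map $x\mapsto|x(q)-x_0(q)|\,\phi(q)$ factors through the projection $\pi_q$ and is therefore ${\cal D}_\phi$-measurable; the countable supremum over $q\in\Q$ is ${\cal D}_\phi$-measurable, so the ball $\{x:\|x-x_0\|_\phi<r\}$ belongs to ${\cal D}_\phi$. As the open balls generate ${\cal B}_\phi^\circ$, this gives the inclusion.

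The reverse inclusion ${\cal D}_\phi\subseteq{\cal B}_\phi^\circ$ is the main obstacle. Since ${\cal D}_\phi$ is generated by the coordinate projections restricted to $\bD_\phi$, it suffices to show that each $\pi_{t_0}$ is $({\cal B}_\phi^\circ,{\cal B}(\R))$-measurable. This cannot be deduced from continuity of $\pi_{t_0}$ alone, because $(\bD_\phi,\|\cdot\|_\phi)$ is \emph{nonseparable}: the indicators $\eins_{[a,b)}$ (all in $\bD_\phi$, being compactly supported and \cadlag) form an uncountable $1$-separated family, so ${\cal B}_\phi^\circ$ is in general strictly smaller than the Borel $\sigma$-algebra and preimages of open sets under continuous maps need not be ball-measurable. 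Instead I would exploit that ${\cal B}_\phi^\circ$ is generated by the distance maps $d_y:=\|\cdot-y\|_\phi$, $y\in\bD_\phi$ (their sublevel sets are open balls, and their superlevel sets are complements of closed balls, which are countable intersections of open balls), and recover $x(t_0)$ from countably many such distances.

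The key step is to use the spike reference functions $y_{c,\epsilon}:=c\,\eins_{[t_0,t_0+\epsilon)}\in\bD_\phi$, with $c,\epsilon$ ranging over the positive rationals. Using right-continuity of $x$ at $t_0$ and continuity of $\phi$, a short computation gives
\[
\lim_{\epsilon\downarrow 0}\big\|x-c\,\eins_{[t_0,t_0+\epsilon)}\big\|_\phi
=\max\Big(|x(t_0)-c|\,\phi(t_0),\ \sup_{t\neq t_0}|x(t)|\,\phi(t)\Big)=:L(c).
\]
Because $\|x\|_\phi<\infty$ while the local term $(c-x(t_0))\phi(t_0)\to\infty$ as $c\to\infty$, for all sufficiently large $c$ one has $L(c)=(c-x(t_0))\phi(t_0)$, hence $x(t_0)=\lim_{c\to\infty,\,c\in\Q}\big(c-L(c)/\phi(t_0)\big)$. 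Here $L(c)$ is a countable $\liminf$ (over rational $\epsilon$) of the ${\cal B}_\phi^\circ$-measurable maps $x\mapsto\|x-y_{c,\epsilon}\|_\phi$, and the outer limit runs over rational $c$, so $\pi_{t_0}$ is ${\cal B}_\phi^\circ$-measurable. Since $t_0$ was arbitrary, ${\cal D}_\phi\subseteq{\cal B}_\phi^\circ$, and together with the first inclusion this yields equality. I expect the only delicate point to be the displayed limit identity: the ``global'' part $\sup_{t\neq t_0}|x(t)|\phi(t)$ need not equal the full norm $\|x\|_\phi$, but this is harmless since the spike term eventually dominates, which is precisely what makes the extraction of $x(t_0)$ work.
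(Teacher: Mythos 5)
Your proposal is correct. Both inclusions go through: the identity $\|x-x_0\|_\phi=\sup_{q\in\Q}|x(q)-x_0(q)|\phi(q)$ (valid by right-continuity of $x-x_0$ and continuity of $\phi$) does put every open ball in ${\cal D}_\phi$, and your limit computation $L(c)=\max\big(|x(t_0)-c|\phi(t_0),\,\sup_{t\neq t_0}|x(t)|\phi(t)\big)$ is right, since the local supremum over $[t_0,t_0+\epsilon)$ converges to $|x(t_0)-c|\phi(t_0)$ by right-continuity while the global part increases to the supremum over $t\neq t_0$; for large rational $c$ the spike term dominates and $x(t_0)$ is recovered as an eventually constant limit of ${\cal B}_\phi^\circ$-measurable quantities. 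The paper uses the same underlying idea for the hard inclusion --- tall, thin spikes supported on $[t_0,t_0+1/n)$ detect the value $x(t_0)$ --- but packages it at the level of sets rather than functions: it exhibits $\pi_t^{-1}((a/\phi(t),\infty))$ directly as a countable union of open balls $B_n(x_n)$ centered at $x_n=(a+(n+1/n)\eins_{[t,t+1/n)})/\phi$, which requires the reduction to $\lim_{|t|\to\infty}\phi(t)=\infty$ so that these centers lie in $\bD_\phi$; your compactly supported spikes $c\,\eins_{[t_0,t_0+\epsilon)}$ avoid that reduction, at the cost of a two-parameter limiting argument instead of a one-line set identity. For the easy inclusion the paper instead writes $B_r(x)$ as $\bigcap_{t\in\Q}\pi_t^{-1}((x(t)-r/\phi(t),x(t)+r/\phi(t)))$; your formulation via the rational supremum is essentially the same countable reduction but is slightly cleaner, since the paper's displayed intersection can in principle pick up points $y$ with $\|x-y\|_\phi=r$ approached but not attained along rationals, whereas measurability of $x\mapsto\sup_{q\in\Q}|x(q)-x_0(q)|\phi(q)$ gives the open ball exactly. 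Both routes are sound; yours is marginally more robust on these two small points, the paper's is shorter.
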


\begin{proof}
Without of loss of generality we assume $\lim_{|t|\to\infty}\phi(t)=\infty$. We denote by $B_r(x)$ the $\|\cdot\|_\phi$-open ball around $x\in\bD_{\phi}$ with radius $r$, that is, $B_r(x):=\{y\in\bD_{\phi}:\|x-y\|_\phi<r\}$. On the one hand, for every $t\in\R$ and $a\in\R$ we have
\begin{equation}\label{BallBorelDudley-PROOF-10}
    \pi_t^{-1}((a/\phi(t),\infty))\,=\,\{x\in\bD_{\phi}:\,x(t)>a/\phi(t)\}\,=\,\bigcup_{n\in\N} B_n(x_n),
\end{equation}
where $x_{n}=x_{n,t,a}$ is defined by $x_{n}(s):=(a+(n+1/n)\eins_{[t,t+1/n)}(s))/\phi(s)$. Thus, $\pi_t^{-1}((b,\infty))$ lies in ${\cal B}_\phi^\circ$ for every $t\in\R$ and $b\in\R$. That is, $\pi_t$ is $({\cal B}_{\phi}^\circ,{\cal B}(\R))$-measurable. Hence, ${\cal D}_{\phi}\subseteq{\cal B}_{\phi}^\circ$. On the other hand, any open ball $B_r(x)$ can be represented as
$$
    B_r(x) = \bigcap_{t\in\Q}\{y\in\bD_{\phi}:|x(t)-y(t)|\phi(t)<r\}=\bigcap_{t\in\Q}\pi_t^{-1}\big((x(t)-r/\phi(t),x(t)+r/\phi(t))\big),
$$
and so it lies in ${\cal D}_{\phi}$. Hence, ${\cal B}_{\phi}^\circ\subseteq{\cal D}_{\phi}$.
\end{proof}

For any given distribution function $F$ on the real line, let $\bC_{\phi,F}\subseteq\bD_{\phi}$ be a $\|\cdot\|_\phi$-separable subspace and assume $\bC_{\phi,F}\in{\cal D}_{\phi}$. Moreover let $f:\bD(f)\rightarrow\R$ be a map defined on a set $\bD(f)$ of distribution functions of finite (not necessarily probability) Borel measures on $\R$. In particular, $\bD(f)\subset\bD$. In the following, $\bD$, $(\bD_{\phi},{\cal D}_{\phi},\|\cdot\|_\phi)$, $\bC_{\phi,F}$, $f$, $\bD(f)$, and $(\R,{\cal B}(\R),|\cdot|)$ will play the roles of $\V$, $(\bE,{\cal B}^{\circ},\|\cdot\|_{\bE})$, $\bE_0$, $f$, $\V_f$, and $(\widetilde\bE,\widetilde{\cal B},\|\cdot\|_{\widetilde\bE})$, respectively.

Let $(\Omega,{\cal F},\pr)$ be a probability space and $F\in\bD(f)$ be the distribution function of a Borel probability measure on $\R$. Let $(X_i)$ be a sequence of identically distributed real-valued random variables on $(\Omega,{\cal F},\pr)$ with distribution function $F$. Let $\widehat F_n:\Omega\rightarrow\bD$ be the empirical distribution function of $X_1,\ldots,X_n$, which will play the role of $\widehat T_n$. It is defined by
\begin{equation}\label{Def EmpDF}
    \widehat F_n:=\frac{1}{n}\sum_{i=1}^n\eins_{[X_i,\infty)}.
\end{equation}
Assume that $\widehat F_n$ takes values only in $\bD(f)$. Let $(\Omega',{\cal F}',\pr')$ be another probability space and set $(\overline\Omega,\overline{\cal F},\overline{\pr}):=(\Omega\times\Omega',{\cal F}\otimes{\cal F}',\pr\otimes\pr')$. Moreover let $\widehat F_n^*:\overline{\Omega}\rightarrow\bD$ be any map; see Section \ref{bootstrap results for empirical processes} for an illustration. Assume that $\widehat F_n^*$ take values only in $\bD(f)$. In the present setting Theorem \ref{modified delta method for the bootstrap} can be reformulated as follows.

\begin{corollary}\label{modified delta method for the bootstrap - II}
Let $F\in\bD(f)$. Let $(a_n)$ be a sequence of positive real numbers tending to $\infty$, and consider the following conditions:
\begin{itemize}
    \item[(a)] $a_n(\widehat F_n-F)$ takes values only in $\bD_{\phi}$ and satisfies
    \begin{equation}\label{modified delta method for the bootstrap - assumption - 10 - II}
         a_n(\widehat F_n-F)\,\leadsto^\circ\,B\qquad\mbox{in $(\bD_{\phi},{\cal D}_{\phi},\|\cdot\|_{\phi})$}
    \end{equation}
    for some $(\bD_{\phi},{\cal D}_{\phi})$-valued random variable $B$ on some probability space $(\Omega_0,{\cal F}_0,\pr_0)$ with $B(\Omega_0)\subseteq\bC_{\phi,F}$.
    \item[(b)] The map $f(\widehat F_n):\Omega\rightarrow\R$ is $({\cal F},{\cal B}(\R))$-measurable.
    \item[(c)] The map $f$ is quasi-Hadamard differentiable at $F$ tangentially to $\bC_{\phi,F}\langle\bD_{\phi}\rangle$ with quasi-Hadamard derivative $\dot f_F$ in the sense of Definition \ref{definition quasi hadamard}.
    \item[(d)] The quasi-Hadamard derivative $\dot f_F$ can be extended from $\bC_{\phi,F}$ to $\bD_{\phi}$ such that the extension $\dot f_F:\bD_{\phi}\rightarrow\R$ is linear and $({\cal D}_{\phi},{\cal B}(\R))$-measurable. Moreover, the extension $\dot f_F:\bD_{\phi}\rightarrow\R$ is continuous at every point of $\bC_{\phi,F}$.
    \item[(e)] The map $f(\widehat F_n^*):\overline\Omega\rightarrow\R$ is $(\overline{\cal F},{\cal B}(\R))$-measurable.
    \item[(f)] $a_n(\widehat F_n^{*}-\widehat F_n)$ takes values only in $\bD_{\phi}$ and is $(\overline{\cal F},{\cal D}_{\phi})$-measurable, and $(\widehat F_n^{*})$ is almost surely a bootstrap version of $(\widehat F_n)$ w.r.t.\ the convergence in (\ref{modified delta method for the bootstrap - assumption - 10 - II}) in the sense of Definition \ref{abstract bootstrap - definition - almost surely}. The latter means that
        \begin{equation}\label{modified delta method for the bootstrap - assumption - 15 - II}
            a_n(\widehat F_n^{*}(\omega,\cdot)-\widehat F_n(\omega))\,\leadsto^\circ\,B\qquad\mbox{in $(\bD_{\phi},{\cal D}_{\phi},\|\cdot\|_{\phi})$},\qquad\mbox{$\pr$-a.e.\ $\omega$}.
        \end{equation}
    \item[(f')] $a_n(\widehat F_n^{*}-\widehat F_n)$ takes values only in $\bD_{\phi}$ and is $(\overline{\cal F},{\cal D}_{\phi})$-measurable, and $(\widehat F_n^{*})$ is a bootstrap version in outer probability of $(\widehat F_n)$ w.r.t.\ the convergence in (\ref{modified delta method for the bootstrap - assumption - 10 - II}) in the sense of Definition \ref{abstract bootstrap - definition - in outer probability}. The latter means that (with $P_n$ defined as in (\ref{def of cond dist empirical distance}))
        \begin{equation}\label{modified delta method for the bootstrap - assumption - 18 - II}
            \lim_{n\to\infty}\pr^{\scriptsize{\sf out}}\big[\big\{\omega\in\Omega:\,d_{\scriptsize{\rm BL}}^\circ(P_n(\omega,\cdot),\mbox{\rm law}\{B\})\ge\delta\big\}\big]=\,0\quad\mbox{ for all }\delta>0,
        \end{equation}
\end{itemize}
Then the following assertions hold:
\begin{itemize}
    \item[(i)] If conditions (a)--(c) hold, then $a_n(f(\widehat F_n)-f(\theta))$ and $\dot f_F(B)$ are respectively $({\cal F},{\cal B}(\R))$- and $({\cal F}_0,{\cal B}(\R))$-measurable, and
    \begin{equation}\label{modified delta method for the bootstrap - assumption - 30 - II}
        a_n(f(\widehat F_n)-f(F))\,\leadsto\,\dot f_F(B)\qquad\mbox{in $(\R,{\cal B}(\R))$}.
    \end{equation}
    \item[(ii)] If conditions (a)--(f) hold, then $a_n(f(\widehat F_n^*)-f(\widehat F_n))$ and $\dot f_F(B)$ are respectively $(\overline{\cal F},{\cal B}(\R))$- and $({\cal F}_0,{\cal B}(\R))$-measurable, and $(f(\widehat F_n^*))$ is a bootstrap version in probability of $(f(\widehat F_n))$ w.r.t.\ the convergence in (\ref{modified delta method for the bootstrap - assumption - 30 - II}) in the sense of Definition \ref{abstract bootstrap - definition - in outer probability f}. The latter means that (with $\widetilde P_n$ defined as in (\ref{def of cond dist empirical f distance}))
        $$
            \lim_{n\to\infty}\pr\big[\big\{\omega\in\Omega:\,\widetilde d_{\scriptsize{\rm BL}}\big(\widetilde P_n(\omega,\cdot),\mbox{\rm law}\{\dot f_F(B)\}\big)\ge\delta\big\}\big]=\,0\quad\mbox{ for all }\delta>0.
        $$
    \item[(iii)]  Assertion (ii) still holds when assumption (f) is replaced by (f').
\end{itemize}
\end{corollary}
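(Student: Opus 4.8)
The plan is to deduce the corollary directly from Theorem \ref{modified delta method for the bootstrap} via the identifications announced just before the statement: take $\V=\bD$, $(\bE,{\cal B}^\circ,\|\cdot\|_\bE)=(\bD_{\phi},{\cal D}_{\phi},\|\cdot\|_\phi)$, $\bE_0=\bC_{\phi,F}$, $\V_f=\bD(f)$, $(\widetilde\bE,\widetilde{\cal B},\|\cdot\|_{\widetilde\bE})=(\R,{\cal B}(\R),|\cdot|)$, $\theta=F$, $\widehat T_n=\widehat F_n$, $\widehat T_n^*=\widehat F_n^*$, $\xi=B$, and $\dot f_\theta=\dot f_F$. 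Three standing hypotheses of the theorem are then immediate: $(\R,|\cdot|)$ is separable (so the required separability of $\widetilde\bE$ holds), $\bC_{\phi,F}$ is a separable subspace by assumption, and $\bC_{\phi,F}\in{\cal D}_{\phi}={\cal B}_{\phi}^\circ$ by assumption together with Lemma \ref{lemma proj eq ball}. Under these identifications and the equality ${\cal B}_{\phi}^\circ={\cal D}_{\phi}$ from Lemma \ref{lemma proj eq ball}, conditions (b), (c), (d), (e) of the corollary are exactly conditions (b), (c), (d), (e) of the theorem, and conclusions (i)--(iii) of the corollary are conclusions (i)--(iii) of the theorem. Thus the only genuine work is to supply the measurability statements that appear in conditions (a) and (f)/(f') of the theorem but are suppressed in the corollary.

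For condition (a), it remains to verify that $a_n(\widehat F_n-F)$ is $({\cal F},{\cal B}_{\phi}^\circ)$-measurable. By Lemma \ref{lemma proj eq ball} we have ${\cal B}_{\phi}^\circ={\cal D}_{\phi}$, and ${\cal D}_{\phi}$ is generated by the coordinate projections $\pi_t$. Hence it suffices to observe that $\omega\mapsto\pi_t(a_n(\widehat F_n(\omega)-F))=a_n(n^{-1}\sum_{i=1}^n\eins_{\{X_i(\omega)\le t\}}-F(t))$ is $({\cal F},{\cal B}(\R))$-measurable for every $t\in\R$, which holds because each $X_i$ is a real-valued random variable. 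Together with the assumed convergence in (\ref{modified delta method for the bootstrap - assumption - 10 - II}) this yields condition (a) of the theorem.

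For conditions (f) and (f'), the corollary only stipulates that $a_n(\widehat F_n^*-\widehat F_n)$ takes values in $\bD_{\phi}$ and is $(\overline{\cal F},{\cal D}_{\phi})$-measurable, whereas the theorem additionally demands the same of $a_n(\widehat F_n^*-F)$, the analogue of $a_n(\widehat T_n^*-\theta)$. I would obtain this from the decomposition
\begin{equation*}
    a_n(\widehat F_n^*-F)=a_n(\widehat F_n^*-\widehat F_n)+a_n(\widehat F_n-F).
\end{equation*}
The first summand lies in $\bD_{\phi}$ and is $(\overline{\cal F},{\cal D}_{\phi})$-measurable by assumption; the second lies in $\bD_{\phi}$ by condition (a) and is $(\overline{\cal F},{\cal D}_{\phi})$-measurable because it factors through the projection $\Pi$ of (\ref{Def Pi}) and is $({\cal F},{\cal D}_{\phi})$-measurable by the previous paragraph. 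Since $\bD_{\phi}$ is a vector space and addition on it is coordinatewise, the map $(x,y)\mapsto x+y$ is $({\cal D}_{\phi}\otimes{\cal D}_{\phi},{\cal D}_{\phi})$-measurable, whence the sum takes values in $\bD_{\phi}$ and is $(\overline{\cal F},{\cal D}_{\phi})$-measurable, as required. The remaining content of (f) (almost-sure bootstrap version) and of (f') (bootstrap version in outer probability) coincides verbatim with that of the theorem.

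With all hypotheses of Theorem \ref{modified delta method for the bootstrap} verified, assertions (i)--(iii) follow at once. The only delicate point is the interplay between the open-ball $\sigma$-algebra ${\cal B}_{\phi}^\circ$ required by the abstract theorem and the coordinate-projection $\sigma$-algebra ${\cal D}_{\phi}$ that is natural for the empirical process; Lemma \ref{lemma proj eq ball} resolves this by identifying the two, after which every measurability check reduces to a coordinatewise statement. I expect no other step to present an obstacle.
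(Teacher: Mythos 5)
Your proposal is correct and follows essentially the same route as the paper: reduce to Theorem \ref{modified delta method for the bootstrap} under the stated identifications and supply the two suppressed measurability checks, namely the $({\cal F},{\cal D}_{\phi})$-measurability of $a_n(\widehat F_n-F)$ (via coordinate projections and Lemma \ref{lemma proj eq ball}) and the $(\overline{\cal F},{\cal D}_{\phi})$-measurability of $a_n(\widehat F_n^*-F)$ via the sum decomposition. Your write-up merely spells out details the paper dismisses as ``easily seen.''
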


\begin{proof}
Corollary \ref{modified delta method for the bootstrap - II} is a consequence of Theorem \ref{modified delta method for the bootstrap}, because the measurability assumption in condition (a) and the first measurability assumption of condition (f) (respectively (f')) of Theorem \ref{modified delta method for the bootstrap} are automatically satisfied in the present setting. Indeed, $a_n(\widehat F_n-F)$ is easily seen to be $({\cal F},{\cal D}_{\phi})$-measurable, and the sum of two $(\overline{\cal F},{\cal D}_{\phi})$-measurable maps is clearly $(\overline{\cal F},{\cal D}_{\phi})$-measurable and we assumed here (through(f) (respectively (f'))) that $a_n(\widehat F_n^*-\widehat F_n)$ is $(\overline{\cal F},{\cal D}_{\phi})$-measurable.
\end{proof}

Conditions (e)--(f') of Corollary \ref{modified delta method for the bootstrap - II} will be illustrated in Sections \ref{bootstrap results for empirical processes - iid}--\ref{bootstrap results for empirical processes - beta mixing}. The following examples illustrate conditions (a)--(d) of Corollary \ref{modified delta method for the bootstrap - II}. See also Section \ref{bootstrap results for empirical processes - applications} for specific applications.

\begin{examplenorm}{\bf (for condition (a))}\label{example iid data}
Assume that $X_1,X_2,\dots$ are i.i.d.\ with distribution function $F$, and let $\phi$ be a weight function. If $\int\phi^2dF<\infty$, then Theorem 6.2.1 in \cite{ShorackWellner1986} shows that
$$
    \sqrt{n}(\widehat F_n-F)\,\leadsto^\circ\,B_{F}\qquad\mbox{in $(\bD_{\phi},{\cal D}_{\phi},\|\cdot\|_\phi)$},
$$
where $B_{F}$ is an $F$-Brownian bridge, i.e.\ a centered Gaussian process with covariance function $\Gamma(t_0,t_1)=F(t_0\wedge t_1)(1-F(t_0\vee t_1))$. Note that $B_{F}$ jumps where $F$ jumps and that $\lim_{|t|\to\infty}B_{F}(t)=0$. Thus, $B_{F}$ takes values only in the set $\bC_{\phi,F}\subset\bD_{\phi}$ consisting of all $x\in\bD_{\phi}$ whose discontinuities are also discontinuities of $F$. It was shown in \citet[Corollary B.4]{Kraetschmeretal2013} that the set $\bC_{\phi,F}$ is $\|\cdot\|_{\phi}$-separable and contained in ${\cal D}_{\phi}$.
{\hspace*{\fill}$\Diamond$\par\bigskip}
\end{examplenorm}

\begin{examplenorm}{\bf (for condition (a))}\label{example weakly dependent data}
Let $\phi$ be any weight function, $(X_i)$ be strictly stationary and $\beta$-mixing with distribution function $F$, and assume that $\ex[\phi(X_1)^p]<\infty$ for some $p>2$ and that the mixing coefficients satisfy $\beta_n=o(n^{-p/(p-2)}(\log n)^{2(p-1)/(p-2)})$. Then
$$
    \sqrt{n}(\widehat F_{n}-F)\,\leadsto^\circ\,\widetilde B_{F}\qquad\mbox{in $(\bD_{\phi},{\cal D}_{\phi},\|\cdot\|_{\phi})$},
$$
where $\widetilde B_F$ is a centered Gaussian process with covariance function $\Gamma(t_0,t_1)=F(t_0\wedge t_1)(1-F(t_0\vee t_1))+\sum_{i=0}^1\sum_{k=2}^{\infty}\covi(\eins_{\{X_1 \le t_i\}}, \eins_{\{X_k \le t_{1-i}\}})$. The result follows by verifying the assumptions of Theorem 2.1 in \citet{ArconesYu1994}. We will verify these assumptions in the proof of Theorem \ref{bootstrap results of Radulovic} below. Note that $\widetilde B_{F}$ jumps where $F$ jumps and that $\lim_{|x|\to\infty}\widetilde B_{F}(x)=0$. Thus, $\widetilde B_{F}$ takes values only in the $\|\cdot\|_{\phi}$-separable and ${\cal D}_{\phi}$-measurable set $\bC_{\phi,F}$ introduced in Example \ref{example iid data}. For illustration, note that many GARCH processes are strictly stationary and $\beta$-mixing; see, for instance, \citet[Chapter 3]{FrancqZakoian2010} and \cite{Boussamaetal2011}.
{\hspace*{\fill}$\Diamond$\par\bigskip}
\end{examplenorm}

Further examples for condition (a) can be found in \cite{BeutnerZaehle2010,BeutnerZaehle2012,BeutnerZaehle2013}, \cite{BeutnerWuZaehle2012}, and \cite{Buchsteiner2015}.

\begin{examplenorm}{\bf (for condition (b))}\label{Example DR Functional}
Let $g$ be a continuous concave distortion function as introduced before (\ref{def dist risk meas}). For every real-valued random variable $X$ (on some given atomless probability space) satisfying $\int_0^\infty g(1-F_{|X|}(x)\big)\,dx<\infty$ the {\em distortion risk measure} associated with $g$ is defined by $\rho_g(X):=f_g(F_X)$ with $f_g$ as in (\ref{def dist risk meas}). Here $F_X$ and $F_{|X|}$ denote the distribution functions of $X$ and $|X|$, respectively. The set ${\cal X}_g$ of all random variables $X$ satisfying the above integrability condition provides a linear subspace of $L^1$; this follows from \citet[Proposition 9.5]{Denneberg1994} and \citet[Proposition 4.75]{FoellmerSchied2011}. It is known that $\rho_g$ is a law-invariant coherent risk measure; see, for instance, \cite{WangDhaene1998}. If specifically $g(s)=(s/\alpha)\wedge 1$ for any fixed $\alpha\in(0,1)$, then we have ${\cal X}_g=L^1$ and $\rho_g$ is nothing but the Average Value at Risk at level $\alpha$.

The {\em risk functional} $f_g:\bD(f_g)\rightarrow\R$ corresponding to $\rho_g$ was already introduced in (\ref{def dist risk meas}), where $\bD(f_g)$ is the set of all distribution functions of the random variables of ${\cal X}_g$. Now, the mapping $\omega\mapsto\widehat F_n(\omega,t)=\frac{1}{n}\sum_{i=1}^n\eins_{[X_i(\omega),\infty)}(t)$ is $({\cal F},{\cal B}(\R))$-measurable for every $t\in\R$. Due to the monotonicity of $g$ also the mapping  $\omega\mapsto g(\widehat F_n(\omega,t))$ is $({\cal F},{\cal B}(\R))$-measurable for every $t\in\R$. By the right-continuity of the mapping $t\mapsto g(\widehat F_n(\omega,t))$ for every fixed $\omega\in\Omega$ we obtain in particular that the mapping $(\omega,t)\mapsto g(\widehat F_n(\omega,t))$ is $({\cal F}\otimes{\cal B}(\R),{\cal B}(\R))$-measurable. Fubini's theorem then implies that the mapping $\omega\mapsto f_g(\widehat F_n(\omega,\cdot))$ is $({\cal F},{\cal B}(\R))$-measurable. So we have in particular that condition (b) of Corollary \ref{modified delta method for the bootstrap - II} holds.
{\hspace*{\fill}$\Diamond$\par\bigskip}
\end{examplenorm}

\begin{examplenorm}{\bf (for conditions (c)--(d))}\label{Example HD of DR Functional}
Let $f_g:\bD(f_g)\rightarrow\R$ be as in Example \ref{Example DR Functional}. Let $F\in\bD(f_g)$ with $0<F(\cdot)<1$, and $\phi$ be a weight function satisfying the integrability condition
\begin{equation}\label{Example HD of DR Functional - Int Cond}
    \int_{-\infty}^\infty\frac{g(\gamma F(t))}{F(t)\,\phi(t)}\,dt\,<\,\infty\qquad\mbox{for some }\gamma\in(0,1).
\end{equation}
Assume that the set of points $t\in\R$ for which $g$ is not differentiable at $F(t)$ has Lebesgue measure zero. Then Theorem 2.7 in \cite{Kraetschmeretal2013} shows that the functional $f_g$ is quasi-Hadamard differentiable at $F$ tangentially to $\bC_{\phi,F}\langle\bD_{\phi}\rangle$ with quasi-Hadamard derivative $\dot f_{g;F}:\bC_{\phi,F}\rightarrow\R$ given by
$$
   \dot f_{g;F}(x)\,:=\,\int_{-\infty}^\infty g'(F(t))\,x(t)\,dt,\qquad x\in\bC_{\phi,F},
$$
where $g'$ denotes the right-sided derivative of $g$ and $\bC_{\phi,F}$ is as in Example \ref{example iid data}. Recall that $\bC_{\phi,F}$ is $\|\cdot\|_\phi$-separable and contained in ${\cal D}_{\phi}$; cf.\ Corollary B.4 in \cite{Kraetschmeretal2013}. The derivative $\dot f_{g;F}$ can be extended to $\bD_{\phi}$ through
$$
   \dot f_{g;F}(x)\,:=\,\int_{-\infty}^\infty g'(F(t))\,x(t)\,dt,\qquad x\in\bD_{\phi},
$$
and the extension is linear and continuous on $\bD_{\phi}$. The linearity is obvious and the continuity is ensured by part (ii) of Lemma 4.1 in \cite{Kraetschmeretal2013}. Thus, condition (c) of Corollary \ref{modified delta method for the bootstrap - II} holds. Moreover, using arguments as in Example \ref{Example DR Functional}, one can easily show that the extension $\dot f_{g;F}:\bD_{\phi}\rightarrow\R$ is also $({\cal D}_{\phi},{\cal B}(\R))$-measurable. That is, condition (d) of Corollary \ref{modified delta method for the bootstrap - II} holds too.
{\hspace*{\fill}$\Diamond$\par\bigskip}
\end{examplenorm}


\section{Bootstrap results for empirical processes}\label{bootstrap results for empirical processes}

In the following two subsections, we will give examples for bootstrap versions $(\widehat T_n^*)$ of $(\widehat T_n)$ in the sense of Definitions \ref{abstract bootstrap - definition - almost surely} and \ref{abstract bootstrap - definition - in outer probability} in the context of Section \ref{application to statistical functionals}, i.e.\ in the case where $\widehat T_n$ is given by an empirical distribution function $\widehat F_n$ of real-valued random variables. As mentioned in the introduction these examples can be combined with the quasi-Hadamard differentiability of statistical functionals to lead to bootstrap consistency for the corresponding plug-in estimators. Examples include empirical distortion risk measures as well as U- and V-statistics which will be discussed in Section \ref{bootstrap results for empirical processes - applications}.


\subsection{I.i.d.\ observations}\label{bootstrap results for empirical processes - iid}

We will adopt the notation introduced in Section \ref{application to statistical functionals}. In particular, $(X_i)$ will be a sequence of identically distributed real-valued random variables on $(\Omega,{\cal F},\pr)$ with distribution function $F$, and $\widehat F_n$ will be given by (\ref{Def EmpDF}). Let $(W_{ni})$ be a triangular array of nonnegative real-valued random variables on $(\Omega',{\cal F}',\pr')$ such that $(W_{n1},\ldots,W_{nn})$ is an exchangeable random vector for every $n\in\N$, and define the map $\widehat F_n^*:\overline\Omega\rightarrow\bD$ by
\begin{equation}\label{example for tau}
    \widehat F_n^*(\omega,\omega')\,:=\,\frac{1}{n} \sum_{i=1}^n W_{ni} (\omega')\,\eins_{[X_i(\omega),\infty)}.
\end{equation}
Note that the sequence $(X_i)$ and the triangular array $(W_{ni})$ regarded as families of random variables on the product space $(\overline\Omega,\overline{\cal F},\overline\pr):=(\Omega\times\Omega',{\cal F}\otimes{\cal F}',\pr\otimes\pr')$ are independent. Of course, we will tacitly assume that $(\Omega',{\cal F}',\pr')$ is rich enough to host all of the random variables described in (a)--(b) in Theorem \ref{bootstrap results of VdV-W}.

\begin{lemma}\label{measurability of tau n *}
$a_n(\widehat F_n^*-\widehat F_n)$ takes values only in $\bD_{\phi}$ and is $(\overline{\cal F},{\cal D}_{\phi})$-measurable. That is, the first part of condition (f) (respectively (f')) of Corollary \ref{modified delta method for the bootstrap - II} holds true.
\end{lemma}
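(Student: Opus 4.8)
The plan is to treat the two assertions---membership in $\bD_\phi$ and $(\overline{\cal F},{\cal D}_\phi)$-measurability---separately, after first rewriting the object explicitly. From (\ref{example for tau}) and (\ref{Def EmpDF}),
\begin{equation*}
  a_n\big(\widehat F_n^*(\omega,\omega')-\widehat F_n(\omega)\big)\,=\,\frac{a_n}{n}\sum_{i=1}^n\big(W_{ni}(\omega')-1\big)\,\eins_{[X_i(\omega),\infty)},
\end{equation*}
so for each fixed $(\omega,\omega')$ this is a finite real-linear combination of the bounded \cadlag\ indicator functions $\eins_{[X_i(\omega),\infty)}$, hence a bounded \cadlag\ step function with finitely many jumps; in particular it lies in $\bD$.

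First I would verify membership in $\bD_\phi$. The essential point is that, although no single $\eins_{[X_i,\infty)}$ belongs to $\bD_\phi$ (it equals $1$ on a half-line and so has infinite $\|\cdot\|_\phi$ when $\phi\to\infty$, and it does not vanish at $+\infty$), the particular combination above has compact support. Indeed, it vanishes on $(-\infty,\min_i X_i(\omega))$ because all indicators are then $0$, and on $[\max_i X_i(\omega),\infty)$ it equals $\frac{a_n}{n}\big(\sum_{i=1}^n W_{ni}(\omega')-n\big)$, which is $0$ by the normalization $\sum_{i=1}^n W_{ni}=n$ of the exchangeable bootstrap weights. Being a bounded function supported on the compact interval $[\min_iX_i(\omega),\max_iX_i(\omega)]$, on which the continuous weight $\phi$ is bounded, it has finite $\|\cdot\|_\phi$ and trivially satisfies $\lim_{|t|\to\infty}|\cdot|=0$; hence it lies in $\bD_\phi$. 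This cancellation at the right tail---caused by the coefficients $W_{ni}-1$ summing to zero---is the step I expect to be the crux, since it is exactly where the weight normalization is indispensable and where membership would otherwise fail.

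It remains to establish $(\overline{\cal F},{\cal D}_\phi)$-measurability, which is routine. Since ${\cal D}_\phi$ is the trace on $\bD_\phi$ of ${\cal D}=\sigma(\pi_t:t\in\R)$, it is generated by the restricted coordinate projections $\pi_t|_{\bD_\phi}$, $t\in\R$ (this is what underlies Lemma \ref{lemma proj eq ball}). Therefore, the map having already been shown to take values in $\bD_\phi$, it suffices by the standard generator criterion to show that, for each fixed $t\in\R$, the real-valued map
\begin{equation*}
  (\omega,\omega')\,\longmapsto\,\pi_t\big(a_n(\widehat F_n^*-\widehat F_n)(\omega,\omega')\big)\,=\,\frac{a_n}{n}\sum_{i=1}^n\big(W_{ni}(\omega')-1\big)\,\eins_{\{X_i(\omega)\le t\}}
\end{equation*}
is $(\overline{\cal F},{\cal B}(\R))$-measurable. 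Each summand is a product of the ${\cal F}'$-measurable factor $W_{ni}(\omega')-1$ and the ${\cal F}$-measurable factor $\eins_{\{X_i(\omega)\le t\}}$ (measurable because $X_i$ is a random variable), so each summand, and hence the finite sum, is measurable with respect to $\overline{\cal F}={\cal F}\otimes{\cal F}'$. This yields the claimed measurability and completes the proof; the assertion about the first part of condition (f)/(f') of Corollary \ref{modified delta method for the bootstrap - II} is then merely a restatement.
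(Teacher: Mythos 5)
Your proof is correct and, for the measurability half, follows essentially the same route as the paper: the paper likewise reduces to the coordinate projections, checking that $(\omega,\omega')\mapsto a_n(\widehat F_n^*((\omega,\omega'),t)-\widehat F_n(\omega,t))$ is $(\overline{\cal F},{\cal B}(\R))$-measurable for each fixed $t$ and then passing to $(\overline{\cal F},{\cal D})$- and hence $(\overline{\cal F},{\cal D}_{\phi})$-measurability, which is exactly your generator argument in view of Lemma \ref{lemma proj eq ball}. Where you genuinely add something is the membership claim: the paper dismisses it with ``obviously takes values only in $\bD_\phi$'', whereas you correctly isolate the crux, namely that the coefficients $W_{ni}-1$ must sum to zero so that the step function vanishes to the right of $\max_iX_i(\omega)$; without this cancellation the function would equal a nonzero constant on a right half-line and would violate both $\|\cdot\|_\phi<\infty$ (for unbounded $\phi$) and the requirement $\lim_{|t|\to\infty}|x(t)|=0$ built into the definition of $\bD_\phi$. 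One caveat: the normalization $\sum_{i=1}^nW_{ni}=n$ that you invoke is not among the stated hypotheses of Section \ref{bootstrap results for empirical processes - iid} (only nonnegativity and exchangeability of $(W_{n1},\ldots,W_{nn})$ are assumed there), so strictly speaking you are using an assumption the lemma does not announce --- but it is one the lemma cannot do without, and it does hold in both settings of Theorem \ref{bootstrap results of VdV-W}: multinomial weights with parameters $n$ and $p_1=\cdots=p_n=1/n$ sum to $n$ by construction, and $\sum_{i=1}^nY_i/\overline{Y}_n=n$ for the Bayesian bootstrap. So your argument is sound where it matters and more careful than the paper's at the one nontrivial point; just record the normalization of the weights explicitly as a standing assumption.
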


\begin{proof}
First of all note that $a_n(\widehat F_n^*((\omega,\omega'),t)-\widehat F_n(\omega,t))$ can be written as
$$
    a_n\Big(\frac{1}{n}\sum_{i=1}^n W_{ni}(\omega')\eins_{[X_i(\omega),\infty)}(t)-\frac{1}{n}\sum_{i=1}^n\eins_{[X_i(\omega),\infty)}(t)\Big)=:\,\Xi_n((\omega,\omega'),t)
$$
for all $t\in\R$ and $(\omega,\omega')\in\overline\Omega$. The mapping $(\omega,\omega')\mapsto \Xi_n((\omega,\omega'),t)$ is $(\overline{\cal F},{\cal B}(\R))$-measurable for every $t\in\R$, and the mapping $t\mapsto \Xi_n((\omega,\omega'),t)$ is right-continuous for every $(\omega,\omega')\in\overline\Omega$. It follows that the mapping $(\omega,\omega')\mapsto \Xi_n((\omega,\omega'),\cdot)$ form $\overline\Omega$ to $\bD$ is $(\overline{\cal F},{\cal D})$-measurable. Further, $\Xi_n((\omega,\omega'),\cdot)$ obviously takes values only in $\bD_\phi$ for every $(\omega,\omega')\in\overline\Omega$. Thus $\Xi_n$ can indeed be seen as an $(\overline{\cal F},{\cal D}_{\phi})$-measurable map from $\overline\Omega$ to $\bD_\phi$ ($\subseteq\bD$).
\end{proof}

The proof of the following Theorem \ref{bootstrap results of VdV-W} strongly relies on Section 3.6.2 in \citet{van der Vaart Wellner 1996}. In fact, the elaborations in Section 3.6.2 in \citet{van der Vaart Wellner 1996} yield slightly stronger results compared to those of Theorem \ref{bootstrap results of VdV-W}, because van der Vaart and Wellner work in a more general framework. More precisely, they establish {\em outer} almost sure bootstrap results for the empirical process w.r.t.\ convergence in distribution in the {\em Hoffmann-J{\o}rgensen sense}. The first result on Efron's bootstrap for the empirical process of i.i.d.\ random variables was given by \citet[Theorem 4.1]{BickelFreedman1981} for the uniform sup-norm, that is, for $\phi\equiv 1$. \cite{Gaenssler1986} extended this result to Vapnik--\v{C}ervonenkis classes. For a version of Efron's bootstrap in a very general set-up, see also \citet[Theorem 2.4]{GineZinn1990}.

\begin{theorem}\label{bootstrap results of VdV-W}
Assume that the random variables $X_1,X_2,\ldots$ are i.i.d., their distribution function $F$ satisfies $\int\phi^2dF<\infty$, and one of the following two settings is met.
\begin{itemize}
    \item[(a)] {\bf (Efron's bootstrap)} The random vector $(W_{n1},\ldots,W_{nn})$ is multinomially distributed according to the parameters $n$ and $p_1=\cdots=p_n=\frac{1}{n}$ for every $n\in\N$.
    \item[(b)] {\bf (Bayesian bootstrap)} $W_{ni}=Y_i/\overline{Y}_n$ for every $n\in\N$ and $i=1,\ldots,n$, where $\overline{Y}_n:=\frac{1}{n}\sum_{j=1}^nY_j$ and $(Y_j)$ is any sequence of nonnegative i.i.d.\ random variables on $(\Omega',{\cal F}',\pr')$ with distribution $\mu$ which satisfies $\int_0^\infty{\mu}[(x,\infty)]^{1/2}\,dx<\infty$ and whose standard deviation coincides with its mean and is strictly positive.
\end{itemize}
Then (condition (a) and) the second part of condition (f) of Corollary \ref{modified delta method for the bootstrap - II} hold for $a_n=\sqrt{n}$, $B=B_F$ and $\widehat F_n^*$ defined in (\ref{example for tau}), where $B_F$ is an $F$-Brownian bridge, i.e.\ a centered Gaussian process with covariance function $\Gamma(t_0,t_1)=F(t_0\wedge t_1)\overline{F}(t_0\vee t_1)$.
\end{theorem}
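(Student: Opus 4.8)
The claim bundles two statements: condition~(a) of Corollary~\ref{modified delta method for the bootstrap - II} and the second part of condition~(f), i.e.\ the conditional bootstrap convergence~(\ref{modified delta method for the bootstrap - assumption - 15 - II}). The first is immediate: it is precisely the content of Example~\ref{example iid data}, so under the present i.i.d.\ assumption with $\int\phi^2\,dF<\infty$ Theorem~6.2.1 in \cite{ShorackWellner1986} gives $\sqrt n(\widehat F_n-F)\leadsto^\circ B_F$ in $(\bD_{\phi},{\cal D}_{\phi},\|\cdot\|_\phi)$, with $B_F$ supported on the $\|\cdot\|_\phi$-separable set $\bC_{\phi,F}\in{\cal D}_{\phi}$. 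The real work is~(\ref{modified delta method for the bootstrap - assumption - 15 - II}), and the plan is to derive it from the exchangeable-bootstrap machinery of Section~3.6.2 in \cite{van der Vaart Wellner 1996}.

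First I would recast the bootstrapped process as a function-indexed empirical process. Writing $\sqrt n(\widehat F_n^*-\widehat F_n)(t)=\tfrac{1}{\sqrt n}\sum_{i=1}^n(W_{ni}-1)\eins_{\{X_i\le t\}}$, the quantity controlled by the $\|\cdot\|_\phi$-norm is $\sup_{t}\big|\tfrac{1}{\sqrt n}\sum_{i}(W_{ni}-1)\,\phi(t)\eins_{\{X_i\le t\}}\big|$, that is, the bootstrapped empirical process indexed by the class $\mathcal F_\phi:=\{\phi(t)\eins_{(-\infty,t]}:t\in\R\}$. The obstacle is that $\mathcal F_\phi$ has an infinite envelope (since $\phi(t)\to\infty$), so the bootstrap central limit theorems of \cite{van der Vaart Wellner 1996} do not apply to $\mathcal F_\phi$ directly.

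I would overcome this by truncation. On each bounded interval $[-m,m]$ the weight is bounded by $\max\{\phi(-m),\phi(m)\}$, so the subclass $\{\phi(t)\eins_{(-\infty,t]}:|t|\le m\}$ has a finite, square-integrable envelope and is $P$-Donsker, and exchangeability of $(W_{n1},\dots,W_{nn})$ lets me invoke the conditional (outer almost sure) bootstrap CLT of Section~3.6.2 in \cite{van der Vaart Wellner 1996} there. For the tails $|t|>m$ I would use the moment condition: since $\phi$ is monotone on each half-line, $\phi(t)^2\,\pr[X>t]\le\ex[\phi(X)^2\eins_{\{X>t\}}]\to 0$ as $t\to\infty$ (and symmetrically on the left), so $\int\phi^2\,dF<\infty$ forces the conditional variance, hence the conditional contribution of the tails to the $\|\cdot\|_\phi$-norm, to be uniformly negligible as $m\to\infty$. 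Combining the compact-part limit with the uniform tail bound and letting $m\to\infty$ would yield conditional weak$^\circ$ convergence of $\sqrt n(\widehat F_n^*-\widehat F_n)$ to $B_F$ in the full weighted norm.

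It then remains to check the weight conditions case by case and to pass to~(\ref{modified delta method for the bootstrap - assumption - 15 - II}). For Efron's bootstrap the multinomial weights satisfy the variance, Lindeberg, and $L_{2,1}$-type conditions of the exchangeable-bootstrap CLT (this is the classical empirical bootstrap); for the Bayesian bootstrap $W_{ni}=Y_i/\overline Y_n$ the hypotheses that the standard deviation of $Y_1$ equals its mean and that $\int_0^\infty\mu[(x,\infty)]^{1/2}\,dx<\infty$ are exactly what is needed to match the variance normalization and the $L_{2,1}$ condition. Finally, because $B_F$ concentrates on the separable set $\bC_{\phi,F}$ and ${\cal D}_{\phi}={\cal B}_{\phi}^\circ$ by Lemma~\ref{lemma proj eq ball}, convergence in the bounded Lipschitz distance is equivalent to weak$^\circ$ convergence (Theorem~\ref{Portemanteau}) and the maps involved are genuinely ${\cal D}_{\phi}$-measurable, so the outer almost sure conclusion of \cite{van der Vaart Wellner 1996} sharpens to the honest $\pr$-a.e.\ weak$^\circ$ convergence required by Definition~\ref{abstract bootstrap - definition - almost surely}. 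The hardest step is the weighted-norm tightness via truncation, where the unbounded envelope of $\mathcal F_\phi$ must be tamed by $\int\phi^2\,dF<\infty$; the translation from the Hoffmann-J{\o}rgensen outer framework of \cite{van der Vaart Wellner 1996} to the open-ball framework is then routine once measurability is in place.
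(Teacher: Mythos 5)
Your starting premise---that the exchangeable-bootstrap CLT of Section~3.6.2 in \citet{van der Vaart Wellner 1996} ``does not apply to $\mathcal F_\phi$ directly'' because the envelope is infinite---is not correct, and it sends you down a route with a real gap. The hypothesis of Theorem~3.6.13 in \citet{van der Vaart Wellner 1996} is a square-integrability condition on the \emph{centered} envelope $\overline{f}(t)=\sup_{x}\big(\phi(x)\eins_{(-\infty,x]}(t)-\phi(x)F(x)\big)$, not on $\sup_x\phi(x)\eins_{(-\infty,x]}(t)$. Since $\int\phi^2\,dF<\infty$ forces $\sup_{t\le 0}F(t)^2\phi(t)^2<\infty$ and $\sup_{t>0}(1-F(t))^2\phi(t)^2<\infty$, the centered envelope is finite, Borel measurable (a supremum over rational indices), and satisfies $\ex[\overline f(X_1)^2]<\infty$. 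This is exactly what the paper verifies, so Theorem~3.6.13 applies to $\F_\phi=\{\phi(x)\eins_{(-\infty,x]}:x\in\R\}$ in one step: one checks (1)~the Donsker property plus a measurability condition for $\F_{\phi,\delta}$ (handled by approximating from the countable subclass indexed by rationals), (2)~the centered-envelope moment bound just described, and (3)~the weight conditions (3.6.8), which hold for both settings by Examples~3.6.10 and~3.6.12 of \citet{van der Vaart Wellner 1996}. No truncation is needed.

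Your truncation scheme, as written, does not close. To pass from the compactly supported classes $\{\phi(t)\eins_{(-\infty,t]}:|t|\le m\}$ to the full weighted norm you must control $\sup_{|t|>m}$ of the \emph{conditional} bootstrapped process uniformly in $n$, and bounding the conditional variance at a single point $t$ (via $\phi(t)^2\pr[X>t]\to0$) does not bound a supremum over an uncountable tail index set; you would need a maximal inequality or an asymptotic-equicontinuity argument for the weighted tail process, plus a careful interchange of the limits $n\to\infty$ and $m\to\infty$ (e.g.\ a conditional version of the convergence-together theorem), none of which you supply. The remaining ingredients of your sketch---verification of the weight conditions for the multinomial and Bayesian weights, and the passage from outer-almost-sure Hoffmann-J{\o}rgensen convergence to $\pr$-a.e.\ convergence in distribution$^\circ$ (the latter via Proposition~1.1 of \citet{Dudley2010})---do match the paper, but the central analytic step should be replaced by the direct verification of the hypotheses of Theorem~3.6.13.
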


\begin{proof}
The claim of Theorem \ref{bootstrap results of VdV-W} would follow from the second assertion of Theorem 3.6.13 in \citet{van der Vaart Wellner 1996} with ${\cal F}=\F_\phi:=\{\phi(x)\eins_{(-\infty,x]}:x\in\R\}$ if we could show that the assumptions of Theorem 3.6.13 in \citet{van der Vaart Wellner 1996} are fulfilled in each of the settings (a)--(b). At this point we stress the facts that convergence in distribution in the Hoffmann-J{\o}rgensen sense implies convergence in distribution$^\circ$ for the open-ball $\sigma$-algebra and that outer almost sure convergence (as defined in part (iii) of Definition 1.9.1 in \citet{van der Vaart Wellner 1996}) implies almost sure convergence (i.e.\ convergence almost everywhere) in the classical sense. The latter follows from Proposition 1.1 in \citet{Dudley2010}.

In Theorem 3.6.13 in \citet{van der Vaart Wellner 1996} it is assumed that the following three assertions hold:
\begin{itemize}
    \item[1)] $\F_\phi$ is a Donsker class w.r.t.\ $\pr$, and $(t_1,\ldots,t_n)\mapsto\sup_{f\in\F_{\phi,\delta}}|\sum_{i=1}^n\lambda_if(t_i)|$ is a measurable mapping on the completion of $(\R^n,{\cal B}(\R^n),\pr_{X_1}^{\otimes n})$ for every $\delta>0$, $\lambda_1,\ldots,\lambda_n\in\R$ and $n\in\N$. Here we set $\F_{\phi,\delta}:=\{f_1-f_2\,:\,f_1,f_2\in\F_\phi,\,\rho_\pr(f_1-f_2)<\delta\}$ with $\rho_\pr(f):=\vari_\pr[f(X_1)]^{1/2}$, where $\vari_\pr$ refers to the variance w.r.t.\ $\pr$.
    \item[2)] $\ex_\pr^{\sf out}[\overline{f}(X_1)^2]<\infty$ for the envelope function $\overline{f}(t):=\sup_{f\in\F_\phi}(f(t)-\ex_\pr[f(X_1)])$, where $\ex_\pr^{\sf out}$ refers to the outer expectation w.r.t.\ $\pr$.
    \item[3)] $(W_{n1},\ldots,W_{n,n})$ is an exchangeable nonnegative random vector for every $n\in\N$, and the triangular array $(W_{ni})$ satisfies condition (3.6.8) in \citet{van der Vaart Wellner 1996}.
\end{itemize}
We will now verify 1)--3).

1): The assumption $\int\phi^2dF<\infty$ 
ensures that $\F_\phi$ is a Donsker class w.r.t.\ $\pr$; cf.\ Example \ref{example iid data}. To verify the second part of assertion 1), let $\delta>0$ arbitrary but fixed and $f\in\F_{\phi,\delta}$ with $\rho_\pr(f)<\delta$. Now, $f$ has the representation $f=\phi(x_1)\eins_{(-\infty,x_1]}-\phi(x_2)\eins_{(-\infty,x_2]}$ for some $x_1,x_2\in\R$, and
\begin{eqnarray*}
    \rho_\pr(f)
    & = & \vari_\pr\big[\phi(x_1)\eins_{(-\infty,x_1]}(X_1)-\phi(x_2)\eins_{(-\infty,x_2]}(X_1)\big]\\
    & = & \phi(x_1)^2F(x_1)(1-F(x_1))+\phi(x_2)^2F(x_2)(1-F(x_2))\\
    & & -\phi(x_1)\phi(x_2)F(x_1\wedge x_2)(1-F(x_1\wedge x_2))
\end{eqnarray*}
depends (right) continuously on $(x_1,x_2)$. So we can find a sequence $(g_m)$ in the countable subclass $\G_{\phi,\delta}:=\{g_{q_1,q_2}=\phi(q_1)\eins_{(-\infty,q_1]}-\phi(q_2)\eins_{(-\infty,q_2]}:q_1,q_2\in\Q,\,\rho_\pr(g_{q_1,q_2})<\delta\}$ of $\F_{\phi,\delta}$ such that $g_m(t)\rightarrow f(t)$ for every $t\in\R$. For instance, $g_m:=g_{q_{1,m},q_{2,m}}$ for any sequences $(q_{1,m})$ and $(q_{2,m})$ in $\Q$ such that $q_{1,m}\searrow x_1$, $q_{2,m}\searrow x_2$ and $\rho_\pr(g_{q_{1,m},q_{2,m}})<\delta$. As discussed in Example 2.3.4 in \citet{van der Vaart Wellner 1996} this implies that the second part of assertion 1) holds.

2): We first of all note that in the present setting the outer expectation $\ex_\pr^{\sf out}$ can be replaced by the classical expectation $\ex_\pr$ w.r.t.\ $\pr$.  Indeed, the envelope function $\overline{f}$ can be written as
$$
    \overline{f}(t)\,=\,\sup_{x\in\R}(\eins_{(-\infty,x]}(t)-F(x))\phi(x)\,=\,\sup_{q\in\Q}(\eins_{(-\infty,q]}(t)-F(q))\phi(q)
$$
and is thus Borel measurable. So it remains to show $\ex[\overline{f}(X_1)^2]<\infty$. To this end, we note that the assumption $\int\phi^2dF<\infty$ implies
$$
    M_1:=\sup_{t \leq 0} F(t)^2\phi(t)^2<\infty\qquad \mbox{and}\qquad M_2:=\sup_{t > 0} (1-F(t))^2\phi(t)^2<\infty.
$$
Furthermore, for $t \leq 0$ we have
$$
    (\eins_{(-\infty,x]}(t)-F(x))^2\phi(x)^2\,=\,
    \left\{\begin{array}{rcc}
        (1-F(x))^2\phi(x)^2 & , & t\le x\\
        F(x)^2\phi(x)^2 & , &  t>x
    \end{array}
    \right.
$$
and so, since the mapping $x \mapsto (1-F(x))^2 \phi(x)^2$ is non-increasing on $[t,0]$,
$$
    \overline{f}(t)^2\,=\,\sup_{x \in \R}(\eins_{(-\infty,x]}(t)-F(x))^2\phi(x)^2\,\le\,\max\{M_1,(1-F(t))^2\phi(t)^2,M_2\}\,=:\,g(t).
$$
For $t>0$ we obtain similarly
$$
    \overline{f}(t)^2\,=\,\sup_{x \in \R}(\eins_{(-\infty,x]}(t)-F(x))^2\phi(x)^2\, \le\, \max\{M_1,F(t)^2\phi(t)^2, M_2\}\,=:\,g(t),
$$
because the mapping $x \mapsto(F(x) \phi(x))^2$ is non-decreasing on $(0,t]$. Hence, $\ex[\overline{f}(X_1)^2]\le\ex[g(X_1)^2]<\infty$ due to our assumption $\int\phi^2dF<\infty$.

3): Examples 3.6.10 and 3.6.12 in \citet{van der Vaart Wellner 1996} show that assertion 3) holds in each of the settings (a)--(b).
\end{proof}


\subsection{Stationary, $\beta$-mixing observations}\label{bootstrap results for empirical processes - beta mixing}

As in Section \ref{bootstrap results for empirical processes - iid}, we will adopt the notation introduced in Section \ref{application to statistical functionals}. In particular, $(X_i)$ will be a sequence of identically distributed real-valued random variables on $(\Omega,{\cal F},\pr)$ with distribution function $F$, and $\widehat F_n$ will be given by (\ref{Def EmpDF}). Let $(\ell_n)$ be a sequence of integers such that $\ell_n\nearrow\infty$ as $n\rightarrow\infty$, and $\ell_n<n$ for all $n\in\N$. Set $k_n:=\lfloor n/\ell_n\rfloor$ for all $n\in\N$. Let $(I_{nj})_{n\in\N,\,1\le j\le k_n}$ be a triangular array of random variables on $(\Omega',{\cal F}',\pr')$ such that $I_{n1},\ldots,I_{nk_n}$ are i.i.d.\ according to the uniform distribution on $\{1,\ldots,n\}$ for every $n\in\N$. Define the map $\widehat F_n^*:\overline\Omega\rightarrow\bD$ by
\begin{equation}\label{example for tau - beta mixing}
    \widehat F_n^*(\omega,\omega')\,:=\,\frac{1}{n}\sum_{i=1}^{n}W_{ni}(\omega')\eins_{[X_i(\omega),\infty)}
\end{equation}
with
\begin{equation}\label{example for tau - beta mixing - 2}
    W_{ni}(\omega')\, := \, \sum_{j=1}^{k_n}\Big(\eins_{\{I_{nj}\le i\le (I_{nj}+\ell_n-1)\wedge n\}}(\omega')+\eins_{\{I_{nj}+\ell_n-1 > n,\,1 \le i \le I_{nj}+\ell_n-1-n\}}(\omega')\Big).
\end{equation}
Note that, as before, the sequence $(X_i)$ and the triangular array $(W_{ni})$ regarded as families of random variables on the product space $(\overline\Omega,\overline{\cal F},\overline\pr):=(\Omega\times\Omega',{\cal F}\otimes{\cal F}',\pr\otimes\pr')$ are independent.

At an informal level this means that given a sample $X_1,\ldots,X_n$, we pick $k_n$ blocks of length $\ell_n$ in the (artificially) extended sample $X_1,\ldots,X_n,X_{n+1},\ldots,X_{n+\ell_n-1}$ (with $X_{n+i}:=X_i$, $i=1,\ldots,\ell_n-1$) where the start indices $I_{n1},I_{n2},\ldots,I_{nk_n}$ are chosen independently and uniformly in the set of all indices $\{1,\ldots,n\}$:
\begin{center}
\begin{tabular}{ll}
    block $1$: \qquad & $X_{I_{n1}},X_{I_{n1}+1},\ldots,X_{I_{n1}+\ell_n-1}$\\
    block $2$: \qquad & $X_{I_{n2}},X_{I_{n2}+1},\ldots,X_{I_{n2}+\ell_n-1}$\\
    & $\vdots$\\
    block $k_{n}$: \qquad & $X_{I_{nk_n}},X_{I_{nk_n}+1},\ldots,X_{I_{nk_n}+\ell_n-1}$
\end{tabular}
\end{center}
The bootstrapped empirical distribution function $\widehat F_n^*$ is then defined to be the distribution function of the discrete finite (not necessarily probability) measure with atoms $X_1,\ldots,X_n$ carrying masses $W_{n1},\ldots,W_{nn}$ respectively, where $W_{ni}$ specifies the number of blocks which contain $X_i$.

\begin{lemma}\label{measurability of tau n * - beta mixing}
$a_n(\widehat F_n^*-\widehat F_n)$ takes values only in $\bD_{\phi}$ and is $(\overline{\cal F},{\cal D}_{\phi})$-measurable. That is, the first part of condition (f) (respectively (f')) of Corollary \ref{modified delta method for the bootstrap - II} holds true.
\end{lemma}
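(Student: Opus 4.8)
The plan is to follow the proof of Lemma~\ref{measurability of tau n *} almost verbatim; the single genuinely new point is the measurability of the block weights $W_{ni}$ from~(\ref{example for tau - beta mixing - 2}). For $t\in\R$ and $(\omega,\omega')\in\overline\Omega$ I would again set
$$
  \Xi_n((\omega,\omega'),t):=a_n\Big(\frac1n\sum_{i=1}^n W_{ni}(\omega')\,\eins_{[X_i(\omega),\infty)}(t)-\frac1n\sum_{i=1}^n\eins_{[X_i(\omega),\infty)}(t)\Big)
$$
and check: (i) for each fixed $t$, $(\omega,\omega')\mapsto\Xi_n((\omega,\omega'),t)$ is $(\overline{\cal F},{\cal B}(\R))$-measurable; (ii) for each fixed $(\omega,\omega')$, $t\mapsto\Xi_n((\omega,\omega'),t)$ is right-continuous and bounded, so that $\Xi_n((\omega,\omega'),\cdot)$ is an element of $\bD$; and (iii) this element in fact lies in $\bD_\phi$. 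Since ${\cal D}$ is generated by the coordinate projections $\pi_t$, claim (i) already yields that $(\omega,\omega')\mapsto\Xi_n((\omega,\omega'),\cdot)$ is $(\overline{\cal F},{\cal D})$-measurable; together with (ii) this gives a $(\overline{\cal F},{\cal D})$-measurable map into $\bD$, and (iii) upgrades it to an $(\overline{\cal F},{\cal D}_\phi)$-measurable map into $\bD_\phi$, which is the assertion.

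For (i), the only difference from the i.i.d.\ case is the more involved dependence of $W_{ni}$ on $\omega'$. I would note that each $W_{ni}$ in~(\ref{example for tau - beta mixing - 2}) is a finite sum over $j=1,\ldots,k_n$ of indicators of events such as $\{I_{nj}\le i\le(I_{nj}+\ell_n-1)\wedge n\}$ and $\{I_{nj}+\ell_n-1>n,\ 1\le i\le I_{nj}+\ell_n-1-n\}$; as $i,\ell_n,n,k_n$ are deterministic and each $I_{nj}$ is a random variable on $(\Omega',{\cal F}',\pr')$, these events lie in ${\cal F}'$, so $W_{ni}$ is a nonnegative integer-valued, hence $({\cal F}',{\cal B}(\R))$-measurable, function of $\omega'$. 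Consequently, for fixed $t$ each summand $(\omega,\omega')\mapsto W_{ni}(\omega')\eins_{[X_i(\omega),\infty)}(t)$ is a product of an ${\cal F}'$-measurable and an ${\cal F}$-measurable factor, hence $\overline{\cal F}={\cal F}\otimes{\cal F}'$-measurable; a finite sum of such terms minus $\widehat F_n(\cdot,t)$ gives (i). Claim (ii) is immediate, since $\Xi_n((\omega,\omega'),\cdot)$ is a finite linear combination of the right-continuous, bounded step functions $\eins_{[X_i(\omega),\infty)}$ and is therefore \cadlag\ with finite sup-norm.

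The step I would scrutinise is (iii). The function $\Xi_n((\omega,\omega'),\cdot)$ is a step function with finitely many jumps that vanishes for $t$ below $\min_i X_i(\omega)$, so finiteness of $\|\cdot\|_\phi$ and the left-tail condition are unproblematic; the issue is the right tail, where $\Xi_n((\omega,\omega'),t)\to a_n\big(\frac1n\sum_{i=1}^n W_{ni}(\omega')-1\big)$ as $t\to+\infty$. Since $\sum_{i=1}^n W_{ni}=k_n\ell_n$, this limit equals $a_n(k_n\ell_n/n-1)$, which is zero precisely when the blocks exactly tile the index set, i.e.\ $k_n\ell_n=n$; note that the measure underlying $\widehat F_n^*$ was explicitly allowed to be non-probability. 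I therefore expect the crux to be ensuring this vanishing right tail: granting $k_n\ell_n=n$ (equivalently total resampled mass one), $\Xi_n((\omega,\omega'),\cdot)$ is compactly supported, so $\lim_{|t|\to\infty}|\Xi_n((\omega,\omega'),t)|=0$ and $\|\Xi_n((\omega,\omega'),\cdot)\|_\phi<\infty$, whence $\Xi_n((\omega,\omega'),\cdot)\in\bD_\phi$ and the proof closes exactly as in Lemma~\ref{measurability of tau n *}. Absent that convention, I would instead have to verify that the construction still forces the right tail to vanish, which is the one place the argument is not purely routine.
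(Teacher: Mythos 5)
Your proposal follows exactly the route the paper takes: the paper's entire proof of Lemma \ref{measurability of tau n * - beta mixing} is the single sentence that the proof of Lemma \ref{measurability of tau n *} applies ``with the obvious modifications'', and your steps (i)--(ii) supply precisely those modifications (namely the $({\cal F}',{\cal B}(\R))$-measurability of the block weights $W_{ni}$ as finite sums of indicators of ${\cal F}'$-events). The concern you raise in step (iii), however, is not addressed by the paper either, and it is well founded: each block contributes exactly $\ell_n$ to $\sum_{i=1}^n W_{ni}$, so $\sum_{i=1}^n W_{ni}=k_n\ell_n$ with $k_n=\lfloor n/\ell_n\rfloor$, and hence $a_n(\widehat F_n^*-\widehat F_n)(t)\to a_n(k_n\ell_n/n-1)=-a_n(n\bmod\ell_n)/n$ as $t\to+\infty$, which is nonzero whenever $\ell_n$ does not divide $n$. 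In that case the path fails the defining requirements of $\bD_\phi$ (the tail condition $\lim_{|t|\to\infty}|x(t)|=0$, and for unbounded $\phi$ already $\|x\|_\phi<\infty$), so for fixed $n$ the claim that $a_n(\widehat F_n^*-\widehat F_n)$ takes values only in $\bD_\phi$ requires either the convention $k_n\ell_n=n$ or a renormalization of $\widehat F_n^*$ by $k_n\ell_n$ in place of $n$. Your argument is therefore at least as complete as the paper's; the one step you could not close is a genuine gap in the source, not in your proof.
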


\begin{proof}
The proof of Lemma \ref{measurability of tau n *} with the obvious modifications also applies to Lemma \ref{measurability of tau n * - beta mixing}.
\end{proof}

The bootstrap method induced by the bootstrapped empirical distribution function $\widehat F_n^*$ defined in (\ref{example for tau - beta mixing})--(\ref{example for tau - beta mixing - 2}) is the so-called {\em circular bootstrap}; see, for instance, \citet{PolitisRomano1992} and \citet{Radulovic1996}. The circular bootstrap is only a slight modification of the moving blocks bootstrap that was independently introduced by \cite{Kuensch1989} in the context of the sample mean and by \citet{Liu1992}. \cite{Buehlmann1994,Buehlmann1995}, \cite{NaikNimbalkarRajarshi1994}, and \cite{Radulovic1996} extended K\"unsch's approach to empirical processes of strictly stationary, mixing observations. \cite{Doukhan(2014)} extended Shao's so-called dependent wild bootstrap for smooth functions of the sample  mean (cf.\ \cite{Shao2010}) to the empirical process of strictly stationary and $\beta$-mixing observations. For an application of the delta-method based on the notion of quasi-Hadamard differentiability the most interesting results are those that allow for weight functions $\phi$ with $\lim_{|x|\rightarrow \infty}\phi(x) \rightarrow \infty$.  The following result is derived from Theorem 1 in \cite{Radulovic1996}.

\begin{theorem}{\bf (Circular bootstrap)}\label{bootstrap results of Radulovic}
Denote by $F$ the distribution function of $X_1$ and assume that the following conditions hold:
\begin{itemize}
    \item[(a)] $\int\phi^p\,dF<\infty$ for some $p>2$.
    \item[(b)] The sequence of random variables $(X_i)$ is strictly stationary and $\beta$-mixing with mixing coefficients $(\beta_i)$ satisfying $\beta_i={\cal O}(i^{-b})$ for some $b>p/(p-2)$.
    \item[(c)] The block length $\ell_n$ satisfies $\ell_n={\cal O}(n^{\gamma})$ for some $\gamma\in(0,\frac{p-2}{2(p-1)})$.
\end{itemize}
Then (condition (a) and) the second part of condition (f') of Corollary \ref{modified delta method for the bootstrap - II} hold for $a_n=\sqrt{n}$, $B=\widetilde B_F$ and $\widehat F_n^*$ defined in (\ref{example for tau - beta mixing}), where $\widetilde B_F$ is a centered Gaussian process with covariance function $\Gamma(t_0,t_1)=F(t_0\wedge t_1)(1-F(t_0\vee t_1))+\sum_{i=0}^1\sum_{k=2}^{\infty}\covi(\eins_{\{X_1 \le t_i\}}, \eins_{\{X_k \le t_{1-i}\}})$.
\end{theorem}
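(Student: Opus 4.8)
The plan is to establish both claims --- the unconditional weak$^\circ$ convergence required in condition~(a) of Corollary~\ref{modified delta method for the bootstrap - II} and the bootstrap consistency required in the second part of condition~(f') --- by reducing each of them to a statement about the empirical process indexed by the weighted indicator class $\F_\phi:=\{\phi(x)\eins_{(-\infty,x]}:x\in\R\}$ already used in the proof of Theorem~\ref{bootstrap results of VdV-W}, and then invoking the $\beta$-mixing functional central limit theorem of \citet{ArconesYu1994} for~(a) and the block-bootstrap consistency theorem of \citet{Radulovic1996} for~(f'). First I would record the identity $\|\sqrt n(\widehat F_n-F)\|_\phi=\sup_{f\in\F_\phi}\big|\frac{1}{\sqrt n}\sum_{i=1}^n(f(X_i)-\ex[f(X_1)])\big|$, which follows from $\widehat F_n(t)=\frac1n\sum_{i=1}^n\eins_{(-\infty,t]}(X_i)$ and identifies the $\|\cdot\|_\phi$-geometry on $\bD_\phi$ with the uniform geometry of the $\F_\phi$-indexed process. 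By Lemma~\ref{lemma proj eq ball} the open-ball $\sigma$-algebra ${\cal B}_\phi^\circ$ equals ${\cal D}_\phi$, and by Example~\ref{example weakly dependent data} (see also \citet[Corollary~B.4]{Kraetschmeretal2013}) the limit $\widetilde B_F$ concentrates on the $\|\cdot\|_\phi$-separable set $\bC_{\phi,F}$; hence, via Theorem~\ref{Portemanteau}, weak$^\circ$ convergence to $\mathrm{law}\{\widetilde B_F\}$ and convergence of the bounded-Lipschitz distance $d_{\scriptsize{\rm BL}}^\circ(\cdot,\mathrm{law}\{\widetilde B_F\})$ to zero are equivalent. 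This reduces everything to verifying the hypotheses of the two cited results for the single class $\F_\phi$.

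For condition~(a) I would verify the assumptions of Theorem~2.1 in \citet{ArconesYu1994}, as announced in Example~\ref{example weakly dependent data}. The permissibility of $\F_\phi$ is obtained from the countable dense subclass argument of step~1) of the proof of Theorem~\ref{bootstrap results of VdV-W}. The envelope-moment requirement amounts to $\ex[\overline f(X_1)^p]<\infty$ for $\overline f(t)=\sup_{x\in\R}|\phi(x)(\eins_{(-\infty,x]}(t)-F(x))|$, which follows from assumption~(a), i.e.\ $\int\phi^p\,dF<\infty$, by repeating the monotonicity estimate of step~2) of that proof with the exponent $2$ replaced by $p$ (note $\phi(t)^pF(t)\le\int\phi^p\,dF$ for $t\le0$ and the symmetric bound for $t>0$). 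The bracketing/entropy condition holds because $\{\eins_{(-\infty,x]}:x\in\R\}$ is a VC class and $L^2(F)$-brackets for $\F_\phi$ can be built from $F$-quantile partitions together with the monotonicity of $\phi$ on each half-line, giving polynomially growing bracketing numbers. Since assumption~(b) imposes $\beta_i={\cal O}(i^{-b})$ with $b>p/(p-2)$, which is strictly stronger than the rate required in Example~\ref{example weakly dependent data}, Theorem~2.1 in \citet{ArconesYu1994} yields $\sqrt n(\widehat F_n-F)\leadsto^\circ\widetilde B_F$ in $(\bD_\phi,{\cal D}_\phi,\|\cdot\|_\phi)$ with $\widetilde B_F$ supported on $\bC_{\phi,F}$, which is condition~(a).

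For the bootstrap assertion I would apply Theorem~1 in \citet{Radulovic1996} to the class $\F_\phi$, after checking that the weights $W_{ni}$ of (\ref{example for tau - beta mixing - 2}) are exactly the circular moving-block weights of that paper, i.e.\ $W_{ni}$ counts how many of the $k_n$ independently and uniformly placed blocks of length $\ell_n$ cover index $i$ in the wrapped sample $X_1,\dots,X_{n+\ell_n-1}$. Radulovic's theorem states that, conditionally on the data, the $\F_\phi$-indexed bootstrap process converges to the same Gaussian limit in the bounded-Lipschitz sense, in (outer) probability, under a bracketing condition on the class and the envelope-moment, mixing-rate and block-length conditions --- the first three of which are exactly those verified in the previous paragraph, while the block-length restriction $\ell_n={\cal O}(n^\gamma)$ with $\gamma<\frac{p-2}{2(p-1)}$ is assumption~(c). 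Translating this conclusion back through the identification recorded in the first paragraph and the equivalences there gives precisely $\pr^{\scriptsize{\sf out}}\big[\{\omega:d_{\scriptsize{\rm BL}}^\circ(P_n(\omega,\cdot),\mathrm{law}\{\widetilde B_F\})\ge\delta\}\big]\to0$ for every $\delta>0$, which is the second part of condition~(f').

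The step I expect to be the main obstacle is the treatment of the \emph{unbounded} envelope of $\F_\phi$: because $\lim_{|t|\to\infty}\phi(t)=\infty$, the class is not uniformly bounded, so neither Theorem~2.1 of \citet{ArconesYu1994} nor Theorem~1 of \citet{Radulovic1996} applies off the shelf. The core of the work is to show that the single moment condition $\int\phi^p\,dF<\infty$ simultaneously controls the envelope moment $\ex[\overline f(X_1)^p]$ and the $L^2(F)$-bracketing entropy of the weighted class tightly enough for the prescribed mixing rate ($b>p/(p-2)$) and block-length rate ($\gamma<\frac{p-2}{2(p-1)}$) to carry the asymptotic equicontinuity of the (bootstrapped) process over $\F_\phi$; in particular one must confirm that Radulovic's argument remains valid for this non-uniformly bounded class. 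A secondary, bookkeeping-type difficulty is matching the class-indexed conditional-convergence conclusions to the open-ball and outer-probability formulation of condition~(f'), for which Lemma~\ref{lemma proj eq ball}, the separability of $\bC_{\phi,F}$, and Theorem~\ref{Portemanteau} are the needed tools.
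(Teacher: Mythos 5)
Your overall plan --- reduce both claims to statements about an empirical process indexed by a weighted indicator class and then cite Arcones--Yu for condition (a) and Radulovic's Theorem 1 for condition (f') --- is exactly the paper's strategy, and you correctly flag the unbounded envelope as the main obstacle. But you do not resolve it, and for the class you actually propose, $\{\phi(x)\eins_{(-\infty,x]}:x\in\R\}$, the envelope step fails outright: the \emph{uncentered} envelope required by Radulovic's theorem is $\overline f(t)=\sup_{x}\phi(x)\eins_{(-\infty,x]}(t)=\sup_{x\ge t}\phi(x)=\infty$ for every $t$, since $\phi(x)\to\infty$ as $x\to\infty$. The monotonicity estimate from step 2) of the proof of Theorem \ref{bootstrap results of VdV-W} cannot simply be ``repeated with exponent $p$'': that estimate bounds the envelope of the \emph{centered} class $\sup_x(f(t)-\ex[f(X_1)])$, and it is the centering that makes it finite there. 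The paper's resolution is not to extend Radulovic's argument to an unbounded envelope but to change the indexing class: it sets $f_x:=\phi(x)\eins_{(-\infty,x]}$ only for $x\le 0$ and $f_x:=-\phi(x)\eins_{(x,\infty)}$ for $x>0$. This leaves the identification of $x\mapsto\sqrt n(\widehat F_n(x)-F(x))\phi(x)$ and $x\mapsto\sqrt n(\widehat F_n^*(x)-\widehat F_n(x))\phi(x)$ with the $\F_\phi$-indexed increments intact, but by the monotonicity of $\phi$ on each half-line the envelope becomes $\overline f(t)=\phi(t)$, and then $\int\overline f^{\,p}\,dF<\infty$ is precisely assumption (a). Without this (or an equivalent) device your proof cannot go through.

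A second, smaller mismatch: you propose to verify an $L^2(F)$-bracketing condition, whereas both Theorem 2.1 of Arcones--Yu and Theorem 1 of Radulovic, as invoked in the paper, are applied to VC-subgraph classes. The bulk of the paper's proof is a combinatorial argument showing that the \emph{modified} class $\F_\phi$ is a VC-subgraph class with $V({\cal C}_{\F_\phi})\le 3$; this is genuinely nontrivial because the class mixes functions of two different shapes (nonnegative left-indicators for $x\le0$ and nonpositive right-indicators for $x>0$), and the case analysis over the sign of the shattering points occupies most of the argument. Your bracketing sketch neither matches the hypotheses of the cited theorems nor substitutes for this verification.
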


A similar result that allows to verify condition (f) of Corollary \ref{modified delta method for the bootstrap - II} (where in (\ref{modified delta method for the bootstrap - assumption - 15 - II}) the empirical distribution function $\widehat F_n$ is replaced by the conditional expectation of $\widehat F_n^*$) can be found in \citet[Theorem 1]{Buehlmann1995}.

\bigskip

\begin{proof}{\bf of Theorem \ref{bootstrap results of Radulovic}}\,
It was shown in \citet[Theorem 2.1]{ArconesYu1994} that under conditions (a)--(b) of Theorem \ref{bootstrap results of Radulovic} the condition (a) of Corollary \ref{modified delta method for the bootstrap - II} is satisfied; see also Example \ref{example weakly dependent data}. In the following we will show that under assumption (a) of Theorem \ref{bootstrap results of Radulovic} the following two assumptions of Theorem 1 in \cite{Radulovic1996} are met for the class of functions $\F_{\phi}:=\{f_x: x \in \R\}$ with $f_x(\cdot):=\phi(x)\eins_{(-\infty,x]}(\cdot)$ for $x \leq 0$ and $f_x(\cdot):=-\phi(x)\eins_{(x,\infty)}(\cdot)$ for $x>0$:
\begin{itemize}
    \item[1)] $\F_\phi$ is a VC-subgraph class.
    \item[2)] $\int \overline{f}^{\,p} dF<\infty$ for the envelope function $\overline{f}(t):=\sup_{x\in\R}|f_x(t)|$.
\end{itemize}
The other assumptions of Theorem 1 in \cite{Radulovic1996} are just our assumptions (b) and (c). Then, since we may identify the maps $x \mapsto\sqrt{n}(\widehat F_n(x)- F(x))\phi(x)$ and $x \mapsto\sqrt{n}(\widehat F_n^*(x)- \widehat F_n(x))\phi(x)$ with respectively $f_x \mapsto\sqrt{n}(\int f_x d\widehat F_n -\int f_x dF)$ and $f_x \mapsto\sqrt{n}(\int f_x d\widehat F_n^* -\int f_x d\widehat F_n)$, Theorem 1 in \cite{Radulovic1996} implies that condition (f') of Corollary \ref{modified delta method for the bootstrap - II} is satisfied too.

Before verifying 1), let us recall the definition of VC-subgraph class; cf., for instance, \citet[Section 2.6]{van der Vaart Wellner 1996}. First recall that the VC-index of a collection ${\cal C}$ of subsets of a nonempty set $\boldsymbol{Y}$ is defined by $V({\cal C}):=\inf\{n:{\rm m}^{{\cal C}}(n)<2^n\}$ with the convention $\inf\emptyset:=\infty$, where
\begin{eqnarray}\label{eq vc class def}
    {\rm m}^{{\cal C}}(n)\,:=\,\max_{y_1,\ldots,y_n\in\boldsymbol{Y}}\,\#\{C\cap\{y_1,\ldots,y_n\}:\,C\in{\cal C}\}.
\end{eqnarray}
A collection ${\cal C}$ is said to be a {\em VC-class} if $V({\cal C})<\infty$. A class $\F$ of functions $f:\R\rightarrow\R$ is said to be a {\em VC-subgraph class} if the collection ${\cal C}_\F:=\{\{(x,t)\in\R^2:t<f(x)\}:f\in\F\}$ is a VC-class of sets in $\boldsymbol{Y}:=\R^2$.

1): We will show that $\F_{\phi}$ is a VC-subgraph class with $V({\cal C}_{\F_\phi})\le 3$. For $V({\cal C}_{\F_\phi})\le 3$ it suffices to show that ${\rm m}^{{\cal C}_{\F_\phi}}(3)<2^3$. Note that that ${\rm m}^{{\cal C}_{\F_\phi}}(3)<2^3$ means that for every choice of $y_1,y_2,y_3\in\R^2$ there exists at least one of the $2^3$ subsets of $\{y_1,y_2,y_3\}$ which cannot be represented as $C\cap\{y_1,y_2,y_3\}$ for any $C\in{\cal C}_{\F_\phi}$. By way of contradiction assume that there exist $y_1=(x_1,t_1)$, $y_2=(x_2,t_2)$, $y_3=(x_3,t_3)$ in $\R^2$ such that every subset of $\{y_1,y_2,y_3\}$ has the representation $C\cap\{y_1,y_2,y_3\}$ for some $C\in{\cal C}_{\F_\phi}$. Then, in particular, there exist $C_{12}, C_{13}, C_{23} \in {\cal C}_{\F_\phi} $ such that
\begin{eqnarray}\label{vc class in terms of sets}
    C_{12} \cap \{(x_1,t_1),(x_2,t_2),(x_3,t_3)\} & = & \{(x_1,t_1),(x_2,t_2)\}, \nonumber \\
    C_{13} \cap \{(x_1,t_1),(x_2,t_2),(x_3,t_3)\} & = & \{(x_1,t_1),(x_3,t_3)\}, \nonumber \\
    C_{23} \cap \{(x_1,t_1),(x_2,t_2),(x_3,t_3)\} & = & \{(x_2,t_2),(x_3,t_3)\}.
\end{eqnarray}
We may and do assume without loss of generality that $x_1 \le x_2 \le x_3$. Then, if (\ref{vc class in terms of sets}) held true, there would exist $x_{12},x_{13},x_{23}\in\R$ such that
\begin{align}\label{vc class in terms of functions}
    & t_1 <   f_{x_{12}}(x_1),\qquad t_2 <   f_{x_{12}}(x_2),\qquad t_3 \geq f_{x_{12}}(x_3), \nonumber\\
    & t_1 <   f_{x_{13}}(x_1),\qquad t_2 \ge f_{x_{13}}(x_2),\qquad t_3 <    f_{x_{13}}(x_3), \nonumber \\
    & t_1 \ge f_{x_{23}}(x_1),\qquad t_2 <   f_{x_{23}}(x_2),\qquad t_3 <    f_{x_{23}}(x_3).
\end{align}
First assume $x_{12} \leq 0$. In this case we have $f_{x_{12}}(\cdot)=\eins_{(-\infty,x_{12}]}(\cdot)\phi(x_{12})$ and thus $t_3\ge0$ (due to $t_3\ge f_{x_{12}}(x_3)$). But then $f_{x_{13}}$ and $f_{x_{23}}$ are also of the form $f_{x_{13}}(\cdot)=\eins_{(-\infty,x_{13}]}(\cdot)\phi(x_{13})$ and $f_{x_{23}}(\cdot)=\eins_{(-\infty,x_{23}]}(\cdot)\phi(x_{23})$, because $t_3 < f_{x_{13}}(x_3)$, $t_3 < f_{x_{23}}(x_3)$, and functions of the form $f_x(\cdot)=-\eins_{(x,\infty)}\phi(x)$ take values only in $(-\infty,-1]\cup\{0\}$. From the second and the third line of (\ref{vc class in terms of functions}) we can now conclude that $f_{x_{13}}(x_1)=f_{x_{13}}(x_3)$, $x_3\le x_{13}$, and $f_{x_{23}}(x_2)=f_{x_{23}}(x_3)$, $x_3\le x_{23}$, respectively. It follows that
\begin{eqnarray}\label{vc subclass contradiction part 1}
    f_{x_{13}}(x_1)=f_{x_{13}}(x_2)\qquad\mbox{and}\qquad f_{x_{23}}(x_1)=f_{x_{23}}(x_2),
\end{eqnarray}
because $x_2 \le x_3$ (which implies $x_2 \in (-\infty,x_{13}]$) and $x_1 \le x_2$ (which implies $x_1 \in (-\infty,x_{23}]$). On the other hand, by (\ref{vc class in terms of functions}) we obviously have
\begin{eqnarray}\label{vc subclass contradiction part 3}
    f_{x_{13}}(x_1) > f_{x_{23}}(x_1)\qquad \mbox{and}\qquad f_{x_{23}}(x_2) > f_{x_{13}}(x_2).
\end{eqnarray}
But (\ref{vc subclass contradiction part 1}) and (\ref{vc subclass contradiction part 3}) contradict each other.

Now assume $x_{12}>0$. This implies that $f_{x_{12}}$ takes values only in $(-\infty,-1]\cup\{0\}$, and therefore $f_{x_{12}}(x_1)\le0$ and $f_{x_{12}}(x_2)\le0$. It follows that $t_1<0$ and $t_2<0$. The latter two inequalities imply $f_{x_{23}}(x_1)<0$ and $f_{x_{13}}(x_2)<0$, respectively. It follows that $x_{23}>0$ and $x_{13}>0$, because otherwise $f_{x_{23}}$ or $f_{x_{13}}$ would take values only in $\{0\}\cup[1,\infty)$. In particular, $t_3<0$ (since $t_3<f_{x_{23}}(x_3)$). That is, we have $t_1,t_2,t_3<0$ and  $f_{x_{12}}(\cdot)=-\eins_{(x_{12},\infty)}(\cdot)\phi(x_{12})$, $f_{x_{13}}(\cdot)=-\eins_{(x_{13},\infty)}(\cdot)\phi(x_{13})$, $f_{x_{23}}(\cdot)=-\eins_{(x_{23},\infty)}(\cdot)\phi(x_{23})$. From the third line of (\ref{vc class in terms of functions}) we first conclude that $x_1 > x_{23}$, because $t_1<0$ (so that $t_1 \geq f_{x_{23}}(x_1)$ is only possible if $x_1 > x_{23}$). Then we also have $x_2 > x_{23}$ and $x_3 > x_{23}$, because $x_3 \geq x_2 \geq x_1$. This implies  $f_{x_{23}}(x_1)=f_{x_{23}}(x_2)=f_{x_{23}}(x_3)$, and we conclude from the third line of (\ref{vc class in terms of functions}) that $t_1 > t_2$. Similarly, from the second line of (\ref{vc class in terms of functions}) we obtain $t_2 > t_3$. Summarizing we must have
\begin{eqnarray}\label{vc class pick one set}
    0 > t_1 > t_2 > t_3.
\end{eqnarray}
Recall that we assumed (by way of contradiction) that $y_1=(x_1,t_1)$, $y_2=(x_2,t_2)$, $y_3=(x_3,t_3)$ are such that every subset of $\{y_1,y_2,y_3\}$ has the representation $C\cap\{y_1,y_2,y_3\}$ for some $C\in{\cal C}_{\F_\phi}$. In particular, there exists a set $C_{2|1,3} \in \mathcal{C}_{\mathbb{F}_{\phi}}$ with
$$
    C_{2|1,3} \cap \{(x_1,t_1),(x_2,t_2),(x_3,t_3)\}\,=\,\{(x_2,t_2)\}.
$$
That is, there exists some $x_{2|1,3}\in\R$ such that
\begin{eqnarray}\label{vc class pick one set II}
    t_1 \geq f_{x_{2|1,3}}(x_1),\qquad t_2 < f_{x_{2|1,3}}(x_2),\qquad t_3 \geq f_{x_{2|1,3}}(x_3).
\end{eqnarray}
Since $t_1<0$, we must have $x_{2|1,3}>0$ (i.e.\ $f_{x_{2|1,3}}(\cdot)=-\eins_{(x_{2|1,3},\infty)}(\cdot)\phi(x_{2|1,3})$) and $x_1>x_{2|1,3}$. The latter inequality implies in particular $x_2>x_{2|1,3}$ and $x_3>x_{2|1,3}$, because $x_3 \geq x_2 \geq x_1$. Hence $f_{x_{2|1,3}}(x_1)=f_{x_{2|1,3}}(x_2)=f_{x_{2|1,3}}(x_3)$. In view of (\ref{vc class pick one set II}), this gives $t_2 <  t_3$. But this contradicts (\ref{vc class pick one set}).

2): The envelope function $\overline{f}$ is given by $\overline{f}(t)=\phi(t)$ for $t \leq 0$ and by $\overline{f}(t)=\phi(t-)=\phi(t)$ (recall that $\phi$ is continuous) for $t>0$. Then under assumption (a) the integrability condition 2) holds.
\end{proof}


\subsection{Some applications}\label{bootstrap results for empirical processes - applications}

In this section we discuss two specific examples. First we rigorously treat the case of empirical distortion risk measures. Thereafter we informally discuss bootstrap results for U- and V-statistics.

1) Let $f_g:\bD(f_g)\rightarrow\R$ be the distortion risk functional associated with a continuous concave distortion function as in (\ref{def dist risk meas}) and Example \ref{Example DR Functional}, and let $\phi:\R\rightarrow[1,\infty)$ be any continuous function. Let $F\in\bD(f_g)$ satisfy the integrability condition (\ref{Example HD of DR Functional - Int Cond}). Let $(X_i)$ be a strictly stationary sequence of real-valued random variables on some probability space $(\Omega,{\cal F},\pr)$ with distribution function $F$. Let $\widehat F_n$ be the empirical distribution function of $X_1,\ldots,X_n$ defined by (\ref{Def EmpDF}). If $X_1,X_2,\ldots$ are independent, $\int\phi^2\,dF<\infty$, and $\widehat F_n^*$ is as in Theorem \ref{bootstrap results of VdV-W} (on some extension $(\overline\Omega,\overline{\cal F},\overline{\pr})=(\Omega\times\Omega',{\cal F}\otimes{\cal F}',\pr\otimes\pr')$ of the original probability space), then Corollary \ref{modified delta method for the bootstrap - II}, Example \ref{example iid data}, Examples \ref{Example DR Functional}--\ref{Example HD of DR Functional}, and Theorem \ref{bootstrap results of VdV-W} show that $(f_g(\widehat F_n^*))$ is a bootstrap version in probability of $(f_g(\widehat F_n))$. This bootstrap consistency can also be obtained by results on L-statistics by \citet{Helmersetal1990} and \citet{Gribkova2002}. However, the latter results rely on the independence of $X_1,X_2,\ldots$. To the best of our knowledge so far there do not exit general results on bootstrap consistency for empirical distortion risk measures associated with continuous concave distortion functions when the data $X_1,X_2,\ldots$ are dependent. On the other hand, our theory admits such results. Indeed, if the sequence $(X_i)$ is $\beta$-mixing with mixing rate as in condition (b) of Theorem \ref{bootstrap results of Radulovic}, $\int\phi^p\,dF<\infty$ for some $p>2$, and $\widehat F_n^*$ is as in Theorem \ref{bootstrap results of Radulovic}, then Corollary \ref{modified delta method for the bootstrap - II}, Example \ref{example weakly dependent data}, Examples \ref{Example DR Functional}--\ref{Example HD of DR Functional}, and Theorem \ref{bootstrap results of Radulovic} show that $(f_g(\widehat F_n^*))$ is a bootstrap version in probability of $(f_g(\widehat F_n))$. We emphasize that the results by \citet[Chapter 4.4]{Lahiri2003} for $\alpha$-mixing data do not cover this bootstrap consistency, because Lahiri assumes Fr\'{e}chet differentiable for $f_g$ which fails for continuous concave distortion functions $g$.

2) Let $f_h:\bD(f_h)\rightarrow\R$ be the V-functional corresponding to a given Borel measurable function $h:\R^2\rightarrow\R$ (sometimes referred to as kernel) which is given by
\begin{equation}\label{def of U}
    f_h(F):=\iint h(x_1,x_2)\,dF(x_1)dF(x_2),
\end{equation}
where $\bD(f_h)$ denotes the set of all distribution functions on the real line for which the double integral in (\ref{def of U}) exists. It was shown in  Theorem 4.1 in \cite{BeutnerZaehle2012} that subject to some regularity conditions on $h$ and $F$ the V-functional $f_h$ is quasi-Hadamard differentiable at $F$ w.r.t.\ a suitable nonuniform sup-norm. Similar as in Example \ref{Example HD of DR Functional} it can be shown that condition (d) of Corollary \ref{modified delta method for the bootstrap - II} holds for the quasi-Hadamard derivative of $f_h$. Then again, if $(X_i)$ is a stationary $\beta$-mixing sequence of random variables with distribution function $F$ and mixing rate as in condition (b) of Theorem \ref{bootstrap results of Radulovic}, $\int\phi^p\,dF<\infty$ for some $p>2$, and $\widehat F_n^*$ is as in Theorem \ref{bootstrap results of Radulovic}, Corollary \ref{modified delta method for the bootstrap - II} shows that $(f_h(\widehat F_n^*))$ is a bootstrap version in probability of $(f_h(\widehat F_n))$. Other approaches to show bootstrap consistency for non-degenerate U- and V-statistics can be found, for example, in \citet{ArconesGine1992}, \citet{Janssen1994}, and \citet{DehlingWendler2010} (yet another approach was exemplified for the variance by \citet{Dudley1990}); see also \citet{BuecherKojadinovic2015} who use results of \citet{DehlingWendler2010}. Among other things \citet[Theorem 2.1]{DehlingWendler2010} also establish bootstrap consistency for non-degenerate U- and V-statistics for $\beta$-mixing sequences. Whereas their approach requires an additional integrability condition on $(X_1,X_k)$, our approach (based on Corollary \ref{modified delta method for the bootstrap - II} that we just outlined) requires stronger regularity conditions on the kernel $h$. Looking at condition (b) in Theorem \ref{bootstrap results of Radulovic} and the condition on the mixing coefficient in \citet[Theorem 2.1]{DehlingWendler2010}, it seems that both approaches impose the same condition on the mixing coefficient. Thus, the approach based on Corollary \ref{modified delta method for the bootstrap - II} may supplement the results in \cite{DehlingWendler2010}.


\section{Proof of Theorem \ref{modified delta method for the bootstrap}}\label{Proof of Abstract Delta Method}

We start with a convention and a general remark. We will equip the product space $\overline{\bE}:=\bE\times\bE$ with the metric $\overline{d}((x_1,x_2),(y_1,y_2)):=\max\{\|x_1-y_1\|_{\bE};\|x_2-y_2\|_{\bE}\}$, and denote the corresponding open-ball $\sigma$-algebra on $\overline\bE$ by $\overline{\cal B}^\circ$. Note that $\overline{\cal B}^\circ\subseteq{\cal B}^\circ\otimes{\cal B}^\circ$, because any $\overline d$-open ball in $\overline\bE$ is the product of two $\|\cdot\|_\bE$-open balls in $\bE$. Analogously the product space $\overline{\widetilde{\bE}}:=\widetilde\bE\times\widetilde\bE$ will be equipped with the metric $\overline{\widetilde d}((\widetilde x_1,\widetilde x_2),(\widetilde y_1,\widetilde y_2)):=\max\{\|\widetilde x_1-\widetilde y_1\|_{\widetilde\bE};\|\widetilde x_2-\widetilde y_2\|_{\widetilde\bE}\}$. By the separability of $(\widetilde\bE,\|\cdot\|_{\widetilde\bE})$ the corresponding Borel $\sigma$-algebra $\overline{\widetilde{\cal B}}$ coincides with the product $\sigma$-algebra $\widetilde{\cal B}\otimes\widetilde{\cal B}$; cf.\ \citet[Proposition 4.1.7]{Dudley2002}. So the couple $(\xi_1,\xi_2)$ is an $(\overline{\widetilde\bE},\overline{\widetilde{\cal B}})$-valued random variable when $\xi_1$ and $\xi_2$ are $(\widetilde\bE,\widetilde{\cal B})$-valued random variables. In particular, $h(\xi_1,\xi_2)$ is an $(\widetilde\bE,\widetilde{\cal B})$-valued random variable when $h:\overline{\widetilde{\bE}}\to\widetilde\bE$ is continuous. Since the addition and the multiplication by constants in normed vector spaces are continuous, we have in particular that a linear combination of two $(\widetilde\bE,\widetilde{\cal B})$-valued random variables is again an $(\widetilde\bE,\widetilde{\cal B})$-valued random variable. This fact will be used frequently in the sequel without further mentioning.

(i): By assumption (b) we have that $f(\widehat T_n)$ is $({\cal F},\widetilde{\cal B})$-measurable. This implies that $a_n(f(\widehat T_n)-f(\theta))$ is $({\cal F},\widetilde{\cal B})$-measurable for every $n\in\N$, because we assumed that $(\widetilde\bE,\|\cdot\|_{\widetilde\bE})$ is separable. Now, assertion (i) directly follows from the functional delta-method in the form of Theorem \ref{modified delta method}.

(ii): Recall that $\widehat T_n$ will frequently be seen as a map defined on the extension $\overline\Omega$ of $\Omega$. From the above we therefore have that $f(\widehat T_n)$ is $(\overline{\cal F},\widetilde{\cal B})$-measurable. Moreover, $f(\widehat T_n^*)$ is $(\overline{\cal F},\widetilde{\cal B})$-measurable due assumption (e). In particular, the map $a_n(f(\widehat T_n^*)-f(\widehat T_n))$ is $(\overline{\cal F},\widetilde{\cal B})$-measurable, because we assumed that $(\widetilde\bE,\|\cdot\|_{\widetilde\bE})$ is separable. By assumptions (a) and (d) we also have that the map $\dot f_\theta(\xi)$ is $(\mathcal{F}_{0},\widetilde{\cal B})$-measurable, and by assumptions (d) and (f) we have that the map $\dot f_\theta(a_n(\widehat T_n^*-\widehat T_n))$ is $(\overline{\cal F},\widetilde{\cal B})$-measurable.

To verify (\ref{modified delta method for the bootstrap - assumption - 40}), we will adapt the arguments of Section 3.9.3 in \citet{van der Vaart Wellner 1996}. First note that $\widetilde Q_n$ defined by
$$
    \widetilde Q_n(\omega,\widetilde A)\,:=\,\pr'\circ\big\{\dot f_\theta\big(a_n(\widehat T_n^*(\omega,\cdot)-\widehat T_n(\omega))\big)\big\}^{-1}[\widetilde A],\qquad\omega\in\Omega,\,\widetilde A\in\widetilde{\cal B}
$$
provides a conditional distribution of $\dot f_\theta(a_n(\widehat T_n^*-\widehat T_n))$ given $\Pi$. This follows from Lemma \ref{reresentation of P n} (with $X(\omega,\omega')=g(\omega,\omega')=\dot f_\theta(a_n(\widehat T_n^*(\omega,\omega')-\widehat T_n(\omega)))$ and $Y=\Pi$). Now, let $\delta>0$ be arbitrary but fixed. For (\ref{modified delta method for the bootstrap - assumption - 40}) it suffices to show that
\begin{equation}\label{proof of modified delta method for the bootstrap - 20}
    \lim_{n\to\infty}\,\pr\Big[\Big\{\omega\in\Omega:\,\widetilde{d}_{\scriptsize{\rm BL}}\big(\widetilde P_n(\omega,\cdot),\widetilde Q_n(\omega,\cdot)\big)\ge\frac{\delta}{2}\Big\}\Big]=\,0
\end{equation}
and
\begin{equation}\label{proof of modified delta method for the bootstrap - 30}
    \lim_{n\to\infty}\,\pr\Big[\Big\{\omega\in\Omega:\,\widetilde d_{\scriptsize{\rm BL}}\big(\widetilde Q_n(\omega,\cdot),\mbox{\rm law}\{\dot f_\theta(\xi)\}\big)\ge\frac{\delta}{2}\Big\}\Big]=\,0.
\end{equation}
Note that the maps $\omega\mapsto \widetilde d_{\scriptsize{\rm BL}}(\widetilde P_n(\omega,\cdot),\widetilde Q_n(\omega,\cdot))$ and $\omega\mapsto \widetilde d_{\scriptsize{\rm BL}}(\widetilde Q_n(\omega,\cdot),\mbox{\rm law}\{\dot f_\theta(\xi)\})$ are $({\cal F},{\cal B}(\R_+))$-measurable, because $(\widetilde\bE,\|\cdot\|_{\widetilde\bE})$ was assumed to be separable. For the latter map one can argue as subsequent to Definition \ref{abstract bootstrap - definition - in outer probability}. For the former map one can argue in the same way, noting that $(\widetilde{\cal M}_1,\widetilde d_{\scriptsize{\rm BL}})$ is separable (cf.\ Remark \ref{remark appendix b separable implies metric} and Theorem \ref{aliprantis}) and that the metric distance of two random variables in a separable metric space is also measurable (cf.\ \citet[Lemma 6.1]{Klenke2014}). In particular, the events in (\ref{proof of modified delta method for the bootstrap - 20}) and (\ref{proof of modified delta method for the bootstrap - 30}) are ${\cal F}$-measurable.

We first show (\ref{proof of modified delta method for the bootstrap - 30}). By (\ref{modified delta method for the bootstrap - assumption - 15}) in assumption (f), the Continuous Mapping theorem in the form of \citet[Theorem 6.4]{Billingsley1999} (along with $\pr_0\circ\xi^{-1}[\bE_0]=1$ and the continuity of $\dot f_\theta$), and the implication (a)$\Rightarrow$(g) in the Portmanteau theorem \ref{Portemanteau}, we have
$$
    \lim_{n\to\infty}\widetilde d_{\scriptsize{\rm BL}}\big(\widetilde Q_n(\omega,\cdot),\mbox{\rm law}\{\dot f_\theta(\xi)\}\big)\,=\,0\qquad\mbox{$\pr$-a.e.\ $\omega$}.
$$
Since almost sure convergence of real-valued random variables implies convergence in probability, we arrive at (\ref{proof of modified delta method for the bootstrap - 30}).

To verify (\ref{proof of modified delta method for the bootstrap - 20}), we set
$$
    \eta_{n}(\omega,\omega')\, := \, a_n\big(f(\widehat T_n^*(\omega,\omega'))-f(\widehat T_n(\omega))\big)-\dot f_\theta\big(a_n(\widehat T_n^*(\omega,\omega')-\widehat T_n(\omega))\big)
$$
and
$$
    \eta_{n,\widetilde h}(\omega,\omega') \,  := \, \widetilde h\Big(a_n\big(f(\widehat T_n^*(\omega,\omega'))-f(\widehat T_n(\omega))\big)\Big)-\widetilde h\Big(\dot f_\theta\big(a_n(\widehat T_n^*(\omega,\omega')-\widehat T_n(\omega))\big)\Big)
$$
for every $\widetilde h\in\widetilde{\rm BL}_1$ with $\widetilde{\rm BL}_1$ as defined before (\ref{def bl metric - add on}). We then obtain
\begin{eqnarray}
    \lefteqn{\pr\Big[\Big\{\omega\in\Omega:\,\widetilde d_{\scriptsize{\rm BL}}\big(\widetilde P_n(\omega,\cdot),\widetilde Q_n(\omega,\cdot)\big)\ge\frac{\delta}{2}\Big\}\Big]}\nonumber\\
    & = & \pr\Big[\Big\{\omega\in\Omega:\,\sup_{\widetilde h\in\widetilde{\rm BL}_1}\Big|\int\widetilde h(\widetilde x)\,\widetilde P_n(\omega,d\widetilde x)-\int\widetilde h(\widetilde x)\,\widetilde Q_n(\omega,d\widetilde x)\Big|\ge\frac{\delta}{2}\Big\}\Big]\nonumber\\
    & = & \pr\Big[\Big\{\omega\in\Omega:\,\sup_{\widetilde h\in\widetilde{\rm BL}_1}\Big|\int\widetilde h\Big(a_n\big(f(\widehat T_n^*(\omega,\omega'))-f(\widehat T_n(\omega))\big)\Big)\,\pr'[d\omega']\nonumber\\
    & & \qquad\qquad\qquad\qquad-\,\int\widetilde h\Big(\dot f_\theta\big(a_n(\widehat T_n^*(\omega,\omega')-\widehat
    T_n(\omega))\big)\Big)\,\pr'[d\omega']\Big|\,\ge\,\frac{\delta}{2}\Big\}\Big]\nonumber\\
    & = & \pr\Big[\Big\{\omega\in\Omega:\,\sup_{\widetilde h\in\widetilde{\rm BL}_1}\Big|\int \eta_{n,\widetilde h}(\omega,\omega')\,\pr'[d\omega']\Big|\,\ge\,\frac{\delta}{2}\Big\}\Big]\nonumber\\
    & = & \pr^{\sf out}\Big[\Big\{\omega\in\Omega:\,\sup_{\widetilde h\in\widetilde{\rm BL}_1}\Big|\int \eta_{n,\widetilde h}(\omega,\omega')\,\pr'[d\omega']\Big|\,\ge\,\frac{\delta}{2}\Big\}\Big]\nonumber\\
    & \le & \pr^{\sf out}\Big[\Big\{\omega\in\Omega:\,\sup_{\widetilde h\in\widetilde{\rm BL}_1}\int \big| \eta_{n,\widetilde h}(\omega,\omega')\big|\,\pr'[d\omega']\,\ge\,\frac{\delta}{2}\Big\}\Big]\nonumber\\
    & = & \pr^{\sf out}\Big[\Big\{\omega\in\Omega:\,\sup_{\widetilde h\in\widetilde{\rm BL}_1}\Big(\int \big| \eta_{n,\widetilde h}(\omega,\omega')\big|\,\eins_{\{|\eta_{n,\widetilde h}|<\delta/4\}}(\omega,\omega')\,\pr'[d\omega']\nonumber\\
    & & \qquad\qquad\qquad\qquad+\int \big| \eta_{n,\widetilde h}(\omega,\omega')\big|\,\eins_{\{|\eta_{n,\widetilde h}|\ge\delta/4\}}(\omega,\omega')\,\pr'[d\omega']\Big)\,\ge\,\frac{\delta}{2}\Big\}\Big]\nonumber\\
    & \le & \pr^{\sf out}\Big[\Big\{\omega\in\Omega:\,\frac{\delta}{4}+\sup_{\widetilde h\in\widetilde{\rm BL}_1}\int \big| \eta_{n,\widetilde h}(\omega,\omega')\big|\,\eins_{\{|\eta_{n,\widetilde h}|\ge\delta/4\}}(\omega,\omega')\,\pr'[d\omega']\,\ge\,\frac{\delta}{2}\Big\}\Big]\nonumber\\
    & \le & \pr^{\sf out}\Big[\Big\{\omega\in\Omega:\,\sup_{\widetilde h\in\widetilde{\rm BL}_1}\int 2\,\eins_{\{|\eta_{n,\widetilde h}|\ge\delta/4\}}(\omega,\omega')\,\pr'[d\omega']\,\ge\,\frac{\delta}{4}\Big\}\Big]\nonumber\\
    & \le & \pr^{\sf out}\Big[\Big\{\omega\in\Omega:\,2\int \eins_{\{\|\eta_{n}\|_{\widetilde\bE}\ge\delta/4\}}(\omega,\omega')\,\pr[d\omega']\,\ge\,\frac{\delta}{4}\Big\}\Big],\label{proof of modified delta method for the bootstrap - 33}
\end{eqnarray}
where the second last and the last step are ensured by $\|\widetilde h\|_\infty\le 1$ and the Lipschitz continuity of $h$ (with Lipschitz constant $1$), respectively. We have seen above that the maps $a_n(f(\widehat T_n)-f(\widehat T_n^*))$ and $\dot f_\theta(a_n(\widehat T_n^*-\widehat T_n))$ are $(\overline{\cal F},\widetilde{\cal B})$-measurable. Since $(\widetilde\bE,\|\cdot\|_{\widetilde\bE})$ is separable, we can conclude that the map $\eta_n$ is $(\overline{\cal F},\widetilde{\cal B})$-measurable. Since the map $\|\cdot\|_{\widetilde\bE}:\widetilde\bE\rightarrow\R_+$ is continuous and thus $(\widetilde{\cal B},{\cal B}(\R_+))$-measurable, we obtain that the map $\|\eta_n\|_{\widetilde\bE}:\overline{\Omega}\rightarrow\R_+$ is $(\overline{\cal F},{\cal B}(\R_+))$-measurable. By Fubini's theorem we can conclude that the map
$$
    \omega\,\longmapsto\,\int \eins_{\{\|\eta_{n}\|_{\widetilde\bE}\ge\delta/4\}}(\omega,\omega')\,\pr'[d\omega']
$$
is $({\cal F},{\cal B}(\R_+))$-measurable. Therefore, we may replace the outer probability $\pr^{\sf out}$ by the ordinary probability $\pr$ in the last line of (\ref{proof of modified delta method for the bootstrap - 33}). So we obtain
\begin{eqnarray}
    \lefteqn{\pr\Big[\Big\{\omega\in\Omega:\,\widetilde d_{\scriptsize{\rm BL}}\big(\widetilde P_n(\omega,\cdot),\widetilde Q_n(\omega,\cdot)\big)\ge\frac{\delta}{2}\Big\}\Big]}\nonumber\\
    & \le & \pr\Big[\Big\{\omega\in\Omega:\,2\int \eins_{\{\|\eta_{n}\|_{\widetilde\bE}\ge\delta/4\}}(\omega,\omega')\,\pr'[d\omega']\,\ge\,\frac{\delta}{4}\Big\}\Big]\nonumber\\
    & = & \pr\Big[\Big\{\omega\in\Omega:\,\pr'\Big[\Big\{\omega'\in\Omega':\,\|\eta_{n}(\omega,\omega')\|_{\widetilde\bE}\ge\frac{\delta}{4}\Big\}\Big]\ge\,\frac{\delta}{8}\Big\}\Big]\nonumber\\
    & \le & \frac{8}{\delta}\int\pr'\Big[\Big\{\omega'\in\Omega':\,\|\eta_{n}(\omega,\omega')\|_{\widetilde\bE}\ge\frac{\delta}{4}\Big\}\Big]\,\pr[d\omega]\nonumber\\
    & = & \frac{8}{\delta}\,\overline\pr\Big[\Big\{(\omega,\omega')\in\overline\Omega:\,\|\eta_{n}(\omega,\omega')\|_{\widetilde\bE}\ge\frac{\delta}{4}\Big\}\Big]\nonumber\\
    & = & \frac{8}{\delta}\,\overline\pr\Big[\big\|a_n\big(f(\widehat T_n^*)-f(\widehat T_n)\big)-\,\dot f_\theta\big(a_n(\widehat T_n^*-\widehat T_n)\big)\big\|_{\widetilde\bE}\ge\frac{\delta}{4}\Big],\nonumber
\end{eqnarray}
where for the third and the fourth step we used respectively Markov's inequality and the representation of the product measure $\overline\pr=\pr\otimes\pr'$ as given in \citet[Formula (23.3)]{Bauer2001}. Thus, it remains to show that
\begin{equation}\label{proof of modified delta method for the bootstrap - 35}
    a_n\big(f(\widehat T_n^*)-f(\widehat T_n)\big)-\dot f_\theta\big(a_n(\widehat T_n^*-\widehat T_n)\big)\,\rightarrow^{\sf p}\,0_{\widetilde\bE}
    \qquad\mbox{in $(\widetilde\bE,\|\cdot\|_{\widetilde\bE})$ w.r.t.\ }\overline{\pr},
\end{equation}
where $\rightarrow^{\sf p}$ refers to convergence in probability and  $0_{\widetilde\bE}$ denotes the null in $\widetilde\bE$. To prove (\ref{proof of modified delta method for the bootstrap - 35}), we note that by assumption (b) we have that $a_n(\widehat T_n-\theta)$ converges in distribution$^\circ$ to some separable random variable, $\xi$. So we may apply part (ii) of Theorem \ref{modified delta method} to obtain
\begin{equation}\label{proof of modified delta method for the bootstrap - 40}
    a_n\big(f(\widehat T_n)-f(\theta)\big)-\dot f_\theta\big(a_n(\widehat T_n-\theta)\big)\,\rightarrow^{\sf p}\,0_{\widetilde\bE}
    \qquad\mbox{in $(\widetilde\bE,\|\cdot\|_{\widetilde\bE})$ w.r.t.\ }\overline{\pr},
\end{equation}
where condition (g) of Theorem \ref{modified delta method} holds since $(\widetilde\bE,\|\cdot\|_{\widetilde\bE})$ was assumed to be separable (cf.\ the discussion at the beginning of the proof).
Further, in the following we will show that $a_n(\widehat T_n^*-\theta)$ converges in distribution$^\circ$ to some separable random variable too. So we may apply part (ii) of Theorem \ref{modified delta method} once more to obtain
\begin{equation}\label{proof of modified delta method for the bootstrap - 50}
    a_n\big(f(\widehat T_n^*)-f(\theta)\big)-\dot f_\theta\big(a_n(\widehat T_n^*-\theta)\big)\,\rightarrow^{\sf p}\,0_{\widetilde\bE}
    \qquad\mbox{in $(\widetilde\bE,\|\cdot\|_{\widetilde\bE})$ w.r.t.\ }\overline{\pr}.
\end{equation}
Now, (\ref{proof of modified delta method for the bootstrap - 40}), (\ref{proof of modified delta method for the bootstrap - 50}) and the linearity of $\dot f_\theta$ imply (\ref{proof of modified delta method for the bootstrap - 35}). 

It remains to show that $a_n(\widehat T_n^*-\theta)$ converges in distribution$^\circ$ to some separable random variable. For this it suffices to show that $(a_n(\widehat T_n-\theta),a_n(\widehat T_n^*-\widehat T_n))$ converges in distribution$^\circ$ to $(\xi_1,\xi_2)$ in $(\overline{\bE},\overline{\cal B}^\circ,\overline{d})$, where $(\xi_1,\xi_2)$ is an $(\overline{\bE},\overline{\cal B}^\circ)$-valued random variable (on some probability space) which takes values only in $\overline{\bE}_0:=\bE_0\times\bE_0$. In fact, the extended Continuous Mapping theorem \ref{Skorohod - CMT} applied to the functions $h_n:\overline{\bE}\rightarrow\bE$ and $h_0:\overline{\bE}_0\rightarrow\bE_0\subseteq\bE$ given by respectively $h_n(x,y):=x+y$ and $h_0(x,y):=x+y$ then implies that $a_n(\widehat T_n^*-\theta)=a_n(\widehat T_n^*-\widehat T_n)+a_n(\widehat T_n-\theta)$ converges in distribution$^\circ$ to the separable random variable $\xi_1+\xi_2$. For the application of the extended Continuous Mapping theorem note that $h_n(a_n(\widehat T_n^*-\widehat T_n),a_n(\widehat T_n-\theta))=a_n(\widehat T_n^*-\theta)$ is $(\overline{\cal F},{\cal B}^\circ)$-measurable by the first part of condition (g) and that the map $h_0:\overline{\bE}_0\rightarrow\bE$ is continuous and $(\overline{\cal B}_0,{\cal B}^\circ)$-measurable for $\overline{\cal B}_0:=\overline{\cal B}^\circ\cap\overline{\bE}_0=\overline{\cal B}\cap\overline{\bE}_0$. For the latter measurability take into account that $\overline{\bE}_0$ is separable w.r.t.\ $\overline{d}$ and argue as at the beginning of the proof. Also note that $(a_n(\widehat T_n-\theta),a_n(\widehat T_n^*-\widehat T_n))$ can be seen as an $(\overline{\bE},\overline{\cal B}^\circ)$-valued random variable, because it is obviously $({\cal F},{\cal B}^\circ\otimes{\cal B}^\circ)$-measurable and $\overline{\cal B}^\circ\subseteq{\cal B}^\circ\otimes{\cal B}^\circ$.

To show that $(a_n(\widehat T_n-\theta),a_n(\widehat T_n^*-\widehat T_n))$ converges in distribution$^\circ$ to some separable random element $(\xi_1,\xi_2)$, we will adapt some of the arguments of the proof of Theorem 2.2 in \citet{Kosorok2008} where weak convergence is understood in the Hoffmann-J{\o}rgensen sense. Let $(\Omega_1\times\Omega_2,{\cal F}_1\otimes{\cal F}_2,\pr_1\otimes\pr_2):=(\overline{\bE},{\cal B}^\circ\otimes{\cal B}^\circ,(\pr_0\circ\xi^{-1})\otimes(\pr_0\circ\xi^{-1}))$ (with $\xi$ and $\pr_0$ as in condition (b)) and $\xi_i$ be the $i$-th coordinate projection on $\Omega_1\times\Omega_2=\overline{\bE}$, $i=1,2$. Then $(\xi_1,\xi_2)$ can be seen as an $(\overline{\bE},\overline{\cal B}^\circ)$-valued random variable on $(\Omega_1\times\Omega_2,{\cal F}_1\otimes{\cal F}_2,\pr_1\otimes\pr_2)$, because by $\overline{\cal B}^\circ\subseteq{\cal B}^\circ\otimes{\cal B}^\circ$ it is clearly $({\cal B}^\circ\otimes{\cal B}^\circ,\overline{\cal B}^\circ)$-measurable. In view of the implication (f)$\Rightarrow$(a) in the Portmanteau theorem \ref{Portemanteau}, for the convergence in distribution$^\circ$ of the pair $(a_n(\widehat T_n-\theta),a_n(\widehat T_n^*-\widehat T_n))$ to the random variable $(\xi_1,\xi_2)$ it suffices to show that
$$
    \int \overline{h}\big(a_n(\widehat T_n-\theta),a_n(\widehat T_n^*-\widehat T_n)\big)\,d(\pr\otimes\pr')\,\longrightarrow\,\int \overline{h}(\xi_1,\xi_2)\,d(\pr_1\otimes\pr_2)
$$
for every $\overline{h}\in\overline{\rm BL}_1^\circ$, where $\overline{\rm BL}_1^\circ$ denotes the set of all real-valued functions on $\overline\bE=\bE\times\bE$ that are $(\overline{\cal B}^\circ,{\cal B}(\R))$-measurable, bounded by $1$ and Lipschitz continuous with Lipschitz constant $1$ (as defined before (\ref{def bl metric - add on})). So, let $\overline{h}\in\overline{{\rm BL}}_1^\circ$. We have
\begin{eqnarray*}
    \lefteqn{\Big|\int\overline{h}(a_n(\widehat T_n-\theta),a_n(\widehat T_n^*-\widehat T_n))\,d(\pr\otimes\pr')-\int\overline{h}(\xi_1,\xi_2)\,d(\pr_1\otimes\pr_2)\Big|}\\
    & \le & \Big|\int\overline{h}(a_n(\widehat T_n-\theta),a_n(\widehat T_n^*-\widehat T_n))\,d(\pr\otimes\pr')-\int\overline{h}(a_n(\widehat T_n-\theta),\xi_2)\,d(\pr\otimes\pr_2)\Big|\\
    & & +\,\Big|\int\overline{h}(a_n(\widehat T_n-\theta),\xi_2)\,d(\pr\otimes\pr_2)-\int\overline{h}(\xi_1,\xi_2)\,d(\pr_1\otimes\pr_2)\big]\Big|\\
    & =: & S_1(n)\,+\,S_2(n).
\end{eqnarray*}
For every $x_2\in\bE$, define the function $h_{x_2}:\bE\rightarrow\R$ by $h_{x_2}(x_1):=\overline{h}(x_1,x_2)$ and note that $h_{x_2}$ is bounded, continuous, and $({\cal B}^{\circ},{\cal B}(\R))$-measurable. The latter measurability means that $h_{x_2}^{-1}(B)=(\overline{h}^{-1}(B))_{x_2}:=\{x_1\in\bE:(x_1,x_2)\in\overline{h}^{-1}(B)\}$ lies in ${\cal B}^\circ$ for every $B\in{\cal B}(\R)$. By the $(\overline{{\cal B}}^\circ,{\cal B}(\R))$-measurability of $\overline{h}$ the set $\overline{h}^{-1}(B)$ lies in $\overline{{\cal B}}^\circ$. By Lemma 23.1 of \cite{Bauer2001} the set $A_{x_2}:=\{x_1\in\bE:(x_1,x_2)\in A\}$ lies in ${\cal B}^\circ$ for every $A\in{\cal B}^\circ\otimes{\cal B}^\circ$. Thus, in view of $\overline{\cal B}^\circ\subseteq{\cal B}^\circ\otimes{\cal B}^\circ$, it follows that the functions $h_{x_2}$ is indeed $({\cal B}^{\circ},{\cal B}(\R))$-measurable. Now, with the help of Fubini's theorem we obtain
\begin{eqnarray*}
    \lefteqn{S_2(n)}\\
    & \le & \int\Big|\int\overline{h}(a_n(\widehat T_n(\omega)-\theta),\xi_2(\omega_2))\,\pr[d\omega]-\int \overline{h}(\xi_1(\omega_1),\xi_2(\omega_2))\,\pr_1(d\omega_1)\Big|\,\pr_2[d\omega_2]\\
    & = & \int\Big|\int h_{\xi_2(\omega_2)}(a_n(\widehat T_n(\omega)-\theta))\,\pr[d\omega]-\int h_{\xi_2(\omega_2)}(\xi_1(\omega_1))\,\pr_1(d\omega_1)\Big|\,\pr_2[d\omega_2].
\end{eqnarray*}
In view of assumption (a), the integrand of the outer integral converges to $0$ for every $\omega_2$. So, since 
$\|h_{x_2}(\cdot)\|_\infty\le 1$ for every $x_2\in\bE$, the Dominated Convergence theorem implies that the summand $S_2(n)$ converges to $0$.
For every $x_1\in\bE$, define the function $h_{x_1}:\bE\rightarrow\R$ by $h_{x_1}(x_2):=\overline{h}(x_1,x_2)$ and note that $h_{x_1}\in{\rm BL}_1^\circ$ for every $x_1\in\bE$ (for the measurability of $h_{x_1}$ one can argue as for $h_{x_2}$ above). With the help of Fubini's theorem we obtain
\begin{eqnarray}\label{eq proof main thm before (iii)}
    S_1(n)
    & \le & \int\Big|\int\overline{h}(a_n(\widehat T_n(\omega)-\theta),a_n(\widehat T_n^*(\omega,\omega')-\widehat T_n(\omega)))\,\pr'[d\omega'] \nonumber \\
    & & \qquad\qquad\qquad-\int\overline{h}(a_n(\widehat T_n(\omega)-\theta),\xi_2(\omega_2))\,\pr_2[d\omega_2]\Big|\,\pr[d\omega]\nonumber \\
    & = & \int\Big|\int h_{a_n(\widehat T_n(\omega)-\theta)}(a_n(\widehat T_n^*(\omega,\omega')-\widehat T_n(\omega)))\,\pr'[d\omega'] \nonumber \\
    & & \qquad\qquad\qquad-\int h_{a_n(\widehat T_n(\omega)-\theta)}(\xi_2(\omega_2))\,\pr_2[d\omega_2]\Big|\,\pr[d\omega] \nonumber \\
    & \le & \int\sup_{m\in\N}\Big|\int h_{a_m(\widehat T_m(\omega)-\theta)}(a_n(\widehat T_n^*(\omega,\omega')-\widehat T_n(\omega)))\,\pr'[d\omega'] \nonumber\\
    & & \qquad\qquad\qquad-\int h_{a_m(\widehat T_m(\omega)-\theta)}(\xi_2(\omega_2))\,\pr_2[d\omega_2]\Big|\,\pr[d\omega].
\end{eqnarray}
The integrand of the outer integral is bounded above by $d_{\scriptsize{\rm BL}}^\circ(P_n(\omega,\cdot),\mbox{\rm law}\{\xi_2\})$. So it follows by the second part of assumption (f) and the implication (a)$\Rightarrow$(g) in the Portmanteau theorem \ref{Portemanteau} that the integrand of the outer integral converges to $0$ for $\pr$-a.e.\ $\omega$. In view of  $\|h_{a_m(\widehat T_m(\omega)-\theta)}(\cdot)\|_\infty\le 1$ for every $m\in\N$, the Dominated Convergence theorem implies that the summand $S_1(n)$ converges to $0$ too. This completes the proof of part (ii).

(iii): One can proceed as for the proof of part (ii). It again suffices to show (\ref{proof of modified delta method for the bootstrap - 20}) and (\ref{proof of modified delta method for the bootstrap - 30}). The proof of (\ref{proof of modified delta method for the bootstrap - 20}) can be transferred nearly verbatim. The convergence of the upper bound in (\ref{eq proof main thm before (iii)}) to zero was justified by the classical Dominated Convergence theorem. This time one has to use slightly different arguments. The upper bound in (\ref{eq proof main thm before (iii)}) is bounded above by
$$
    \int^{\scriptsize{\sf out}}\sup_{h\in {\rm BL}_1^\circ}\Big|\int h(a_n(\widehat T_n^*(\omega,\omega')-\widehat T_n(\omega)))\,\pr'[d\omega']-\int h(\xi_2(\omega_2))\,\pr_2[d\omega_2]\Big|\,\pr[d\omega],
$$
which equals
$$
    \int^{\scriptsize{\sf out}} d_{{\rm BL}}^\circ(P_n(\omega,\cdot),{\rm law}\{\xi\})\,\pr[d\omega].
$$
Here $\int^{\scriptsize{\sf out}}$ refers to the outer integral (outer expectation). By (\ref{modified delta method for the bootstrap - assumption - 18}) in assumption (f'), the integrand of the latter integral converges to $0$ in outer probability. Lemma 3.3.4 in \cite{Dudley1999} then implies
$$
    \limsup_{n\to\infty}\int^{\scriptsize{\sf out}} d_{{\rm BL}}^\circ(P_n(\omega,\cdot),{\rm law}\{\xi\})\,\pr[d\omega]\,\le\,0.
$$
It follows that the summand $S_1(n)$ again converges to $0$. This gives (\ref{proof of modified delta method for the bootstrap - 20}).

It remains to show that (\ref{proof of modified delta method for the bootstrap - 30}) can also be derived from assumption (f'). We have
\begin{eqnarray}
    \lefteqn{\pr\Big[\Big\{\omega\in\Omega:\,\widetilde d_{\scriptsize{\rm BL}}\big(\widetilde Q_n(\omega,\cdot),\mbox{\rm law}\{\dot f_\theta(\xi)\}\big)\ge\frac{\delta}{2}\Big\}\Big]}\nonumber\\
    & = & \pr\Big[\Big\{\omega\in\Omega:\,\sup_{\widetilde h\in\widetilde{\rm BL}_1}\Big|\int\widetilde h(\widetilde x)\,\widetilde Q_n(\omega,d\widetilde x)-\int\widetilde h(\widetilde x)\,\mbox{\rm law}\{\dot f_\theta(\xi)\}[d\widetilde x]\Big|\ge\frac{\delta}{2}\Big\}\Big]\nonumber\\
    & = & \pr\Big[\Big\{\omega\in\Omega:\,\sup_{\widetilde h\in\widetilde{\rm BL}_1}\Big|\int\widetilde h\Big(\dot f_\theta\big(a_n(\widehat T_n^*(\omega,\omega')-\widehat
    T_n(\omega))\big)\Big)\,\pr'[d\omega']\nonumber\\
    & & \qquad\qquad\qquad\qquad-\,\int\widetilde h\big(\dot f_\theta(\xi(\omega_0))\big)\,\pr_0[d\omega_0]\Big|\ge\,\frac{\delta}{2}\Big\}\Big]\nonumber\\
    & = & \pr\Big[\Big\{\omega\in\Omega:\,\sup_{\widetilde h\in\widetilde{\rm BL}_1}\Big|\int\widetilde h\circ\dot f_\theta\big(a_n(\widehat T_n^*(\omega,\omega')-\widehat
    T_n(\omega))\big)\,\pr'[d\omega']\nonumber\\
    & & \qquad\qquad\qquad\qquad-\,\int\widetilde h\circ\dot f_\theta\big(\xi(\omega_0)\big)\,\pr_0[d\omega_0]\Big|\ge\,\frac{\delta}{2}\Big\}\Big]\nonumber\\
    & = & \pr\Big[\Big\{\omega\in\Omega:\,\sup_{\widetilde h\in\widetilde{\rm BL}_1}\Big|\int\widetilde h\circ\dot f_\theta(x)\,P_n(\omega,dx)\nonumber\\
    & & \qquad\qquad\qquad\qquad-\,\int\widetilde h\circ\dot f_\theta(x)\,\mbox{\rm law}\{\xi\}[dx]\Big|\ge\,\frac{\delta}{2}\Big\}\Big]\nonumber\\
    & \le & \pr^{\sf out}\Big[\Big\{\omega\in\Omega:\,\sup_{h\in{\rm BL}_1^\circ}\Big|\int h(x)\,P_n(\omega,dx)\,-\,\int h(x)\,\mbox{\rm law}\{\xi\}[dx]\Big|\ge\,\frac{\delta}{2(L_{f,\theta}\vee 1)}\Big\}\Big]\nonumber\\
    & = & \pr^{\sf out}\Big[\Big\{\omega\in\Omega:\,d_{\scriptsize{\rm BL}}^\circ\big(P_n(\omega,\cdot),\mbox{\rm law}\{\xi\}\big)\ge\frac{\delta}{2(L_{f,\theta}\vee 1)}\Big\}\Big],\label{proof of modified delta method for the bootstrap - 60}
\end{eqnarray}
where $L_{f,\theta}>0$ denotes the Lipschitz constant of the linear and continuous (thus Lipschitz continuous) map $\dot f_\theta$. The last line in (\ref{proof of modified delta method for the bootstrap - 60}) converges to $0$ as $n\rightarrow\infty$ by (\ref{modified delta method for the bootstrap - assumption - 18}) in assumption (f'). This gives (\ref{proof of modified delta method for the bootstrap - 30}), and the proof is complete.
\proofendsign


\appendix

\section{Weak topology and weak convergence for the open-ball $\sigma$-algebra}\label{sec weak conv for open ball}

Let $(\bE,d)$ be a metric space and ${\cal B}^\circ$ be the $\sigma$-algebra on $\bE$ generated by the open balls $B_r(x):=\{y\in\bE:d(x,y)<r\}$, $x\in\bE$, $r>0$. We will refer to ${\cal B}^\circ$ as {\em open-ball $\sigma$-algebra}. If $(\bE,d)$ is separable, then ${\cal B}^\circ$ coincides with the Borel $\sigma$-algebra ${\cal B}$. If $(\bE,d)$ is not separable, then ${\cal B}^\circ$ might be strictly smaller than ${\cal B}$ and thus a continuous real-valued function on $\bE$ is not necessarily $({\cal B}^\circ,{\cal B}(\R))$-measurable. Let $C_{\rm b}^\circ$ be the set of all bounded, continuous and $({\cal B}^\circ,{\cal B}(\R))$-measurable real-valued functions on $\bE$, and ${\cal M}_1^\circ$ be the set of all probability measures on $(\bE,{\cal B}^\circ)$. For every $f\in C_{\rm b}^\circ$ we consider the mapping
$$
    \pi_f:{\cal M}_1^\circ\longrightarrow\R,\qquad\mu\longmapsto\int f\,d\mu.
$$
The {\em weak$^\circ$ topology} ${\cal O}_{\rm w}^\circ$ on ${\cal M}_1^\circ$ is defined to be the topology ${\cal O}(\F)$ generated by the class of functions $\F:=\{\pi_f:f\in C_{\rm b}^\circ\}$. That is, ${\cal O}_{\rm w}^\circ:={\cal O}({\cal S}_\F):=\bigcap_{\mbox{\scriptsize{${\cal O}$ topology on ${\cal M}_1^\circ$ with ${\cal O}$}}\supseteq{\cal S}_\F}{\cal O}$ for the system ${\cal S}_\F:=\{\pi_f^{-1}(G'):f\in C_{\rm b}^\circ,\,G'\in{\cal O}_\R\}$, where ${\cal O}_\R$ is the usual topology of open sets in $\R$. In other words, the weak$^\circ$ topology is the coarsest topology on ${\cal M}_1^\circ$ w.r.t.\ which each of the maps $\pi_f$, $f\in C_{\rm b}^\circ$, is continuous. A sequence $(\mu_n)$ in ${\cal M}_1^\circ$ converges to some $\mu_0\in{\cal M}_1^\circ$ in the weak$^\circ$ topology ${\cal O}_{\rm w}^\circ$ if and only if
$$
    \int f\,d\mu_n\,\longrightarrow\,\int f\,d\mu_0\qquad\mbox{for all }f\in C_{\rm b}^\circ;
$$
see, for instance, Lemma 2.52 in \cite{AliprantisBorder1999} (take into account that every sequence is a net). In this case, we also say that $(\mu_n)$ {\em converges weak$^\circ$ly} to $\mu_0$ and write $\mu_n\Rightarrow^\circ\mu_0$. It is worth mentioning that two probability measures $\mu_0,\nu_0\in{\cal M}_1^\circ$ coincide if $\mu_0[\bE_0]=\nu_0[\bE_0]=1$ for some separable $\bE_0\in{\cal B}^\circ$ and $\int f\,d\mu_0=\int f\,d\nu_0$ for all uniformly continuous $f\in C_{\rm b}^\circ$; see, for instance, \citet[Theorem 6.2]{Billingsley1999}.

\begin{remarknorm}\label{remark appendix B coincide if separable} Recall that ${\cal B}^\circ={\cal B}$ when $(\bE,d)$ is separable. In this case we suppress the superscript $^\circ$ and write simply $C_{\rm b}$, ${\cal M}_1$, weak, ${\cal O}_{\rm w}$, and $\Rightarrow$ instead of $C_{\rm b}^\circ$, ${\cal M}_1^\circ$, weak$^\circ$, ${\cal O}_{\rm w}^\circ$, and $\Rightarrow^\circ$, respectively.
{\hspace*{\fill}$\Diamond$\par\bigskip}
\end{remarknorm}

Denote by ${\rm BL}_1^\circ$ the set of all $({\cal B}^\circ,{\cal B}(\R))$-measurable functions $f:\bE\rightarrow\R$ satisfying $|f(x)-f(y)|\le d(x,y)$ for all $x,y\in\bE$ and $\sup_{x\in\bE}|f(x)|\le 1$. Note that ${\rm BL}_1^\circ$ is contained in the set of all uniformly continuous functions in $C_{\rm b}^\circ$. The {\em bounded Lipschitz distance} on ${\cal M}_1^\circ$ is defined by
\begin{equation}\label{def bl metric - add on}
    d_{\scriptsize{\rm BL}}^\circ(\mu,\nu)\,:=\,\sup_{f\in{\rm BL}_1^\circ}\Big|\int f\,d\mu-\int f\,d\nu\Big|\,.
\end{equation}
It is easily seen that the mapping $d_{\scriptsize{\rm BL}}^\circ:{\cal M}_1^\circ\times{\cal M}_1^\circ\to\R_+$ satisfies the axioms of a pseudo-metric on ${\cal M}_1^\circ$, i.e.\ that it is symmetric and satisfies $d_{\scriptsize{\rm BL}}^\circ(\mu,\mu)=0$ as well as the triangle inequality.

\begin{remarknorm}\label{remark appendix b separable implies metric}
If $(\bE,d)$ is separable, then we again suppress the superscript $^\circ$ and write simply ${\rm BL}_1$ and $d_{\scriptsize{\rm BL}}$ instead of ${\rm BL}_1^\circ$ and $d_{\scriptsize{\rm BL}}^\circ$, respectively. In this case the bounded Lipschitz distance $d_{\scriptsize{\rm BL}}$ provides even a metric on ${\cal M}_1$, because ${\rm BL}_1$ is separating in ${\cal M}_1$; the latter follows from the proof of Theorem 1.2 in \cite{Billingsley1999}.
{\hspace*{\fill}$\Diamond$\par\bigskip}
\end{remarknorm}

\begin{theorem}\label{Portemanteau}
{\bf (Portmanteau theorem)} Let $\mu_n\in{\cal M}_1^\circ$, $n\in\N_0$, and assume that $\mu_0[\bE_0]=1$ for some separable $\bE_0\in{\cal B}^\circ$. Then the following conditions are equivalent:
\begin{itemize}
    \item[(a)] $\mu_n\Rightarrow^\circ\mu_0$.
    \item[(b)] $\int f\,d\mu_n\rightarrow\int f\,d\mu_0$ for all uniformly continuous $f\in C_{\rm b}^\circ$.
    \item[(c)] $\limsup_{n\to\infty}\mu_n[F]\le\mu_0[F]$ for all closed $F\in{\cal B}^\circ$.
    \item[(d)] $\liminf_{n\to\infty}\mu_n[G]\ge\mu_0[G]$ for all open $G\in{\cal B}^\circ$.
    \item[(e)] $\mu_n[A]\rightarrow\mu_0[A]$ for every $A\in{\cal B}^\circ$ for which ${\cal B}^\circ$ contains an open set $G$ and a closed set $F$ such that $G\subseteq A\subseteq F$ and $\mu_0[F\setminus G]=0$.
    \item[(f)] $\int f\,d\mu_n\rightarrow\int f\,d\mu_0$ for all $f\in {\rm BL}_1^\circ$.
    \item[(g)] $d_{\scriptsize{\rm BL}}^\circ(\mu_n,\mu_0)\rightarrow0$.
\end{itemize}
\end{theorem}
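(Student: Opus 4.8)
The plan is to prove all seven conditions equivalent through a single cycle, disposing first of the directions that are immediate from the characterisation of $\Rightarrow^\circ$ recorded before the theorem. Since every uniformly continuous function in $C_{\rm b}^\circ$ and every member of ${\rm BL}_1^\circ$ belongs to $C_{\rm b}^\circ$, condition (a) yields (b) and (f) at once; and (g)$\Rightarrow$(f) is trivial because $d_{\scriptsize{\rm BL}}^\circ(\mu_n,\mu_0)\to0$ forces $\big|\int f\,d\mu_n-\int f\,d\mu_0\big|\to0$ for each fixed $f\in{\rm BL}_1^\circ$. The real content is therefore the chain (b)$\Rightarrow$(c)$\Rightarrow$(d)$\Rightarrow$(e)$\Rightarrow$(a) together with a way of attaching (f) and (g) to it.

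For (b)$\Rightarrow$(c), given a closed $F\in{\cal B}^\circ$ I would dominate $\eins_F$ by the functions $g_k:=(1-k\,d(\cdot,F))^+$, which are bounded, $k$-Lipschitz, and decrease pointwise to $\eins_F$; then $\limsup_n\mu_n[F]\le\int g_k\,d\mu_n\to\int g_k\,d\mu_0$, and letting $k\to\infty$ gives $\le\mu_0[F]$ by monotone convergence. Condition (d) is (c) read on complements, and (e) follows by squeezing: for a set $A$ trapped between the open $G$ and closed $F$ of (e) with $\mu_0[F\setminus G]=0$ one has $\limsup_n\mu_n[A]\le\limsup_n\mu_n[F]\le\mu_0[F]=\mu_0[G]\le\liminf_n\mu_n[G]\le\liminf_n\mu_n[A]$. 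For (e)$\Rightarrow$(a) I would take any $f\in C_{\rm b}^\circ$ and use a layer-cake decomposition: the sets $\{f>t\}$ are open and the sets $\{f\ge t\}$ are closed, both lie in ${\cal B}^\circ$, and they differ only by $\{f=t\}$, which is $\mu_0$-null for all but countably many $t$; hence $\{f>t\}$ is a continuity set in the sense of (e) for almost every $t$, and dominated convergence turns the convergence $\mu_n[\{f>t\}]\to\mu_0[\{f>t\}]$ into $\int f\,d\mu_n\to\int f\,d\mu_0$ for every $f\in C_{\rm b}^\circ$, which is precisely (a). To bring in the bounded-Lipschitz conditions I would observe that $g_k/(k\vee1)\in{\rm BL}_1^\circ$, so the computation proving (b)$\Rightarrow$(c) runs verbatim with (f) in place of (b); this yields (f)$\Rightarrow$(c), and combined with the cycle and the trivial (g)$\Rightarrow$(f) it remains only to establish (a)$\Rightarrow$(g).

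Two points require genuine care, and both arise from the possible non-separability of $(\bE,d)$; both are defused by the hypothesis that $\mu_0[\bE_0]=1$ with $\bE_0$ separable. The first is that $d(\cdot,F)$ above need not be $({\cal B}^\circ,{\cal B}(\R))$-measurable on $\bE$, since $\{d(\cdot,F)<r\}$ is in general an uncountable union of balls. I would circumvent this by exploiting that $\mu_0$ ignores $\bE\setminus\bE_0$: the dominating approximants can be assembled from countably many balls centred at a dense sequence of $\bE_0$ and rational radii, so that they are $({\cal B}^\circ,{\cal B}(\R))$-measurable on all of $\bE$ while still dominating $\eins_F$ and decreasing to it $\mu_0$-almost everywhere; since the left estimate uses only the domination and the right one only integrates against $\mu_0$, nothing is lost. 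The second --- and the step I expect to be the main obstacle --- is (a)$\Rightarrow$(g), where pointwise convergence of $\int f\,d\mu_n$ must be upgraded to convergence uniform over the whole family ${\rm BL}_1^\circ$. Here the plan is the classical equicontinuity argument: given $\varepsilon>0$, separability of $\bE_0$ furnishes a totally bounded $T\subseteq\bE_0$ with $\mu_0[T]>1-\varepsilon$, and passing to a small open enlargement of $T$ and invoking (d) bounds $\limsup_n\mu_n$ of its complement by $\varepsilon$; a finite net of $T$ then reduces ${\rm BL}_1^\circ$ on the mass-carrying part to a finite family of test values in $[-1,1]^N$, on which convergence is automatically uniform, while $\|f\|_\infty\le1$ and the common Lipschitz modulus control the net error and the tail. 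The delicacy is precisely that this uniformisation must be carried out for all of ${\rm BL}_1^\circ$ simultaneously, reconciling the nonseparable ambient space with the separable support of $\mu_0$.
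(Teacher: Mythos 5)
Your proof is correct, but it is a self-contained reconstruction of what the paper obtains almost entirely by citation: the paper's proof consists of invoking Theorem 6.3 of Billingsley (1999) for the equivalence of (a)--(e), noting that (b)$\Rightarrow$(f) is trivial and that Billingsley's Lipschitz approximant (6.1), rescaled so as to lie in ${\rm BL}_1^\circ$, converts his proof of (b)$\Rightarrow$(c) into a proof of (f)$\Rightarrow$(c), and citing Example IV.3.22 of Pollard (1984) for (a)$\Leftrightarrow$(g). Your chain (b)$\Rightarrow$(c)$\Rightarrow$(d)$\Rightarrow$(e)$\Rightarrow$(a), your rescaling remark for (f)$\Rightarrow$(c), and your totally-bounded-core plus finite-net uniformisation for (a)$\Rightarrow$(g) are exactly the arguments behind those citations, so what your route buys is self-containedness (and an explicit check that the measurability conventions match), at the cost of length. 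The one step you should pin down precisely is the measurability repair in (b)$\Rightarrow$(c): a function built directly as a supremum of bumps over countably many balls centred at a dense sequence of $\bE_0$ cannot dominate $\eins_F$, since points of $F$ far from $\bE_0$ are missed by every such ball of controlled radius. The construction that does work (and is in effect what Billingsley's (6.1) amounts to) is dual: inscribe countably many balls with centres in the dense sequence of $\bE_0$ and rational radii into the open set $\bE\setminus F$, let $G'\in{\cal B}^\circ$ be their union, so that $G'\subseteq\bE\setminus F$ while $G'\supseteq(\bE\setminus F)\cap\bE_0$; approximate $\eins_{G'}$ from below by ${\cal B}^\circ$-measurable Lipschitz functions $u_j\uparrow\eins_{G'}$, and set $h_j:=1-u_j$. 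These $h_j$ are ball-measurable, dominate $\eins_F$ everywhere, and decrease to $\eins_{\bE\setminus G'}$, which coincides with $\eins_F$ on $\bE_0$ and hence $\mu_0$-a.e. --- precisely the three properties your sketch postulates, but obtained by inner approximation of the complementary open set rather than outer approximation of $F$. With that clarification your argument goes through, including its ${\rm BL}_1^\circ$-rescaled version for (f)$\Rightarrow$(c).
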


\begin{proof}
The equivalence of the conditions (a), (b), (c), (d), and (e) is known from Theorem 6.3 of \cite{Billingsley1999}, and the implications (b)$\Rightarrow$(f) is trivial. The arguments in the proof of (b)$\Rightarrow$(c) in Theorem 6.3 of \cite{Billingsley1999} also prove the implication (f)$\Rightarrow$(c). Indeed, the function $f$ defined in (6.1) in \cite{Billingsley1999} is bounded by $1$ and Lipschitz continuous with Lipschitz constant $\varepsilon^{-1}$, $\varepsilon f$ is an element of ${\rm BL}_1^\circ$ for $\varepsilon\in(0,1]$, and $\int f\,d\mu_n\rightarrow\int f\,d\mu$ if and only if $\int \varepsilon f\,d\mu_n\rightarrow\int\varepsilon f\,d\mu$. Finally, the equivalence of (a) and (g) was discussed in Example IV.3.22 of \citet{Pollard1984}.
\end{proof}

The following Theorem \ref{aliprantis} is a special case of Theorem 15.12 in \citet{AliprantisBorder1999}. Recall that a topological space is separable if it contains a countable dense subset; a subset is dense in a topological space if its closure coincides with the whole space.

\begin{theorem}\label{aliprantis}
The topological space $({\cal M}_1,{\cal O}_{\rm w})$ is metrizable and separable if $(\bE,d)$ is separable.
\end{theorem}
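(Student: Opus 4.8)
The cleanest route is to produce an explicit metric generating ${\cal O}_{\rm w}$ together with a countable dense set (the statement is also a special case of Theorem 15.12 in \citet{AliprantisBorder1999}). Since $(\bE,d)$ is separable, ${\cal M}_1^\circ={\cal M}_1$ consists of all Borel probability measures, Remark \ref{remark appendix b separable implies metric} shows that $d_{\scriptsize{\rm BL}}$ is a genuine metric, and the equivalence (a)$\Leftrightarrow$(g) in the Portmanteau theorem \ref{Portemanteau} shows that $d_{\scriptsize{\rm BL}}$-convergence of sequences coincides with weak convergence $\Rightarrow$. The only substantive work is to promote this \emph{sequential} equivalence to an identity of \emph{topologies}, and the decisive idea is to reduce the supremum over the infinite class ${\rm BL}_1$ defining $d_{\scriptsize{\rm BL}}$ to a supremum over finitely many test functions.

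Since ${\cal O}_{\rm w}$ is generated by the bounded continuous functions $C_{\rm b}$ it depends only on the topology of $\bE$, so I would first replace $d$ by a topologically equivalent \emph{totally bounded} metric $\rho$ (obtained by embedding the separable space $\bE$ into the Hilbert cube and pulling back its metric) and work with the bounded Lipschitz distance $d_{\scriptsize{\rm BL}}$ and the unit class ${\rm BL}_1$ \emph{of this metric} $\rho$. Let $\hat\bE$ be the completion of $(\bE,\rho)$; it is complete and totally bounded, hence compact. The class ${\rm BL}_1$ is then uniformly bounded and $\rho$-equicontinuous, so by the Arzel\`a--Ascoli theorem (its members extend to equicontinuous functions on the compact $\hat\bE$) it is relatively compact, hence \emph{totally bounded}, in the supremum norm. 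This is the crucial gain: for every $\eta>0$ there is a finite set $f_1,\dots,f_N\in{\rm BL}_1$ such that every $f\in{\rm BL}_1$ lies within supremum distance $\eta$ of some $f_i$.

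With this finite net I would verify $\tau_{d_{\scriptsize{\rm BL}}}={\cal O}_{\rm w}$. For $\tau_{d_{\scriptsize{\rm BL}}}\subseteq{\cal O}_{\rm w}$, fix $\mu_0$ and $\varepsilon>0$, choose the net for $\eta=\varepsilon/4$, and observe that
\[
    d_{\scriptsize{\rm BL}}(\mu,\mu_0)\,=\,\sup_{f\in{\rm BL}_1}\Big|\!\int f\,d(\mu-\mu_0)\Big|\,\le\,\max_{i\le N}\Big|\!\int f_i\,d(\mu-\mu_0)\Big|+2\eta .
\]
Hence the basic weak neighbourhood $\{\mu:\,|\int f_i\,d(\mu-\mu_0)|<\varepsilon/4,\ i\le N\}$, which is ${\cal O}_{\rm w}$-open because each $f_i\in C_{\rm b}$, is contained in the ball $\{\mu:\,d_{\scriptsize{\rm BL}}(\mu,\mu_0)<\varepsilon\}$; thus every ball is a weak neighbourhood of each of its points. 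For the reverse inclusion ${\cal O}_{\rm w}\subseteq\tau_{d_{\scriptsize{\rm BL}}}$ it suffices that each generating functional $\pi_f$, $f\in C_{\rm b}$, be $d_{\scriptsize{\rm BL}}$-continuous, and since $\tau_{d_{\scriptsize{\rm BL}}}$ is metric this reduces to sequential continuity: if $d_{\scriptsize{\rm BL}}(\mu_n,\mu_0)\to0$ then $\mu_n\Rightarrow\mu_0$ by Portmanteau (g)$\Rightarrow$(a), whence $\int f\,d\mu_n\to\int f\,d\mu_0$. This establishes metrizability.

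Finally, for separability I would take a countable $\rho$-dense set $\{x_j\}\subseteq\bE$ and let $\mathcal D$ be the countable family of atomic measures $\sum_i q_i\delta_{x_{j_i}}$ with nonnegative rational weights summing to $1$. Given $\mu\in{\cal M}_1$ and $\varepsilon>0$, I would partition $\bE$ into finitely many Borel sets of $\rho$-diameter below $\varepsilon$, relocate the mass of each to a point $x_{j_i}$ of that set and rationalise the weights; since every $f\in{\rm BL}_1$ varies by at most the diameter over each piece, the resulting measure is within $d_{\scriptsize{\rm BL}}$-distance $O(\varepsilon)$ of $\mu$, so $\mathcal D$ is dense. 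The main obstacle throughout is the total-boundedness step of the second paragraph: passing to an equivalent totally bounded metric and invoking Arzel\`a--Ascoli is precisely what replaces the infinite supremum in $d_{\scriptsize{\rm BL}}$ by a finite one and thereby converts the sequential Portmanteau equivalence into the topological identity $\tau_{d_{\scriptsize{\rm BL}}}={\cal O}_{\rm w}$; this is the only place where separability of $\bE$ is used in an essential way.
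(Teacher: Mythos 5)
Your argument is correct, but note that the paper does not actually prove Theorem \ref{aliprantis}: it is stated as a special case of Theorem 15.12 in \citet{AliprantisBorder1999} and simply cited. What you have written is a sound, self-contained proof along the standard textbook lines: pass to a topologically equivalent totally bounded metric $\rho$ via the Hilbert-cube embedding (this is where separability of $\bE$ enters), use Arzel\`a--Ascoli on the compact completion to make the unit bounded-Lipschitz class of $\rho$ totally bounded in sup-norm, reduce the supremum defining $d_{\scriptsize{\rm BL}}$ to a finite net of test functions so that every $d_{\scriptsize{\rm BL}}$-ball is a weak neighbourhood of its centre, obtain the reverse inclusion from sequential continuity of the $\pi_f$ together with first countability of a metric topology and the implication (g)$\Rightarrow$(a) of Theorem \ref{Portemanteau}, and exhibit the countable dense family of rational convex combinations of Dirac measures. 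The one caveat is cosmetic but worth recording: the metric you construct is the bounded Lipschitz distance associated with the auxiliary metric $\rho$, not with the original $d$, so your argument proves the theorem as stated (metrizability and separability of $({\cal M}_1,{\cal O}_{\rm w})$) but does not by itself give the paper's subsequent Corollary that $d_{\scriptsize{\rm BL}}$ built from the original $d$ generates ${\cal O}_{\rm w}$; your finite-net step genuinely needs total boundedness and fails for a general separable $d$, which is why the paper derives that Corollary separately from Theorem \ref{aliprantis} combined with the equivalence (a)$\Leftrightarrow$(g) of the Portmanteau theorem and the fact that two metrizable topologies with the same convergent sequences coincide. In short, the paper buys the result by citation; you buy it by an explicit construction, and both are legitimate.
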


The bounded Lipschitz distance $d_{\scriptsize{\rm BL}}$ provides a metric on ${\cal M}_1$ when $(\bE,d)$ is separable; cf.\ Remark \ref{remark appendix b separable implies metric}.  Also recall that the topology generated by a metric consists of all $d$-open subsets of the underlying space. As a consequence of Theorem \ref{aliprantis} and the Portmanteau theorem \ref{Portemanteau} we obtain the following well known result.

\begin{corollary}
If $(\bE,d)$ is separable, then the bounded Lipschitz distance $d_{\scriptsize{\rm BL}}$ generates the weak topology ${\cal O}_{\rm w}$ on ${\cal M}_1$.
\end{corollary}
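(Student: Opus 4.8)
The plan is to show that the topology $\tau_{\scriptsize{\rm BL}}$ generated by the metric $d_{\scriptsize{\rm BL}}$ on ${\cal M}_1$ coincides with the weak topology ${\cal O}_{\rm w}$. Recall that $d_{\scriptsize{\rm BL}}$ is a genuine metric in the present separable setting, by Remark \ref{remark appendix b separable implies metric}, so $\tau_{\scriptsize{\rm BL}}$ is metrizable by construction; on the other hand, ${\cal O}_{\rm w}$ is metrizable by Theorem \ref{aliprantis}. The key observation is that two metrizable topologies on the same underlying set agree as soon as they possess the same convergent sequences with the same limits. Indeed, a metrizable topology is sequential, so a set is closed precisely when it is sequentially closed; and whether a set is sequentially closed depends only on the convergent sequences of the topology. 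Hence coincidence of convergent sequences forces coincidence of the closed sets, and therefore of the two topologies. This reduces the claim to a statement about sequential convergence.

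First I would record what convergence means in each topology. By the characterization of weak$^\circ$ convergence recalled after the definition of ${\cal O}_{\rm w}^\circ$ (specialized to the separable case, where the superscript is dropped), a sequence $(\mu_n)$ converges to $\mu_0$ in ${\cal O}_{\rm w}$ if and only if $\int f\,d\mu_n\to\int f\,d\mu_0$ for all $f\in C_{\rm b}$, that is, if and only if $\mu_n\Rightarrow\mu_0$. On the other hand, $(\mu_n)$ converges to $\mu_0$ in $\tau_{\scriptsize{\rm BL}}$ precisely when $d_{\scriptsize{\rm BL}}(\mu_n,\mu_0)\to0$.

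Next I would invoke the Portmanteau theorem \ref{Portemanteau}. Since $(\bE,d)$ is separable, $\bE$ itself is a separable set lying in ${\cal B}^\circ={\cal B}$, so the hypothesis $\mu_0[\bE_0]=1$ of Theorem \ref{Portemanteau} holds with $\bE_0:=\bE$ for every $\mu_0\in{\cal M}_1$. The equivalence of conditions (a) and (g) there then yields that $\mu_n\Rightarrow\mu_0$ if and only if $d_{\scriptsize{\rm BL}}(\mu_n,\mu_0)\to0$. Combining this with the previous paragraph, convergence in ${\cal O}_{\rm w}$ and convergence in $\tau_{\scriptsize{\rm BL}}$ mean exactly the same thing, with the same limits (uniqueness of limits being automatic, as both topologies are Hausdorff). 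By the reduction of the first paragraph, $\tau_{\scriptsize{\rm BL}}={\cal O}_{\rm w}$, which is the assertion.

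I do not expect a serious obstacle here: the entire argument rests on the Portmanteau equivalence (a)$\Leftrightarrow$(g) together with the elementary topological fact that a metrizable topology is determined by its convergent sequences. The one point that genuinely needs the separability hypothesis is the metrizability of ${\cal O}_{\rm w}$ supplied by Theorem \ref{aliprantis}; without it, agreement of convergent sequences would not be enough to conclude that the topologies coincide, since a non-first-countable topology is not recoverable from its sequential convergence. Everything else is routine.
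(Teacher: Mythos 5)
Your proof is correct and takes essentially the same route as the paper's: both reduce the claim to the fact that two metrizable topologies coincide once they have the same convergent sequences (you argue this via sequentiality of metrizable spaces, the paper via first countability and Theorem 2.40 of Aliprantis--Border), and both then combine the metrizability of ${\cal O}_{\rm w}$ from Theorem \ref{aliprantis} with the equivalence (a)$\Leftrightarrow$(g) of the Portmanteau theorem \ref{Portemanteau}. Your explicit check that the separability hypothesis of Theorem \ref{Portemanteau} is satisfied with $\bE_0:=\bE$ is a welcome detail the paper leaves implicit.
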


\begin{proof}
First, two topologies ${\cal O}$ and ${\cal O}'$ on a nonempty set coincide if and only if the identity is a homeomorphism w.r.t.\ ${\cal O}$ and ${\cal O}'$. Second, a topology is first countable if it is metrizable; cf.\ \citet[p.\,27]{AliprantisBorder1999}. Thus it follows by the second part of Theorem 2.40 in \cite{AliprantisBorder1999} that two metrizable topologies coincide if and only if convergence of any sequence in ${\cal O}$ implies convergence of the sequence in ${\cal O}'$ and vice versa. By Theorem \ref{aliprantis} the topology ${\cal O}_{\rm w}$ is metrizable, and the topology ${\cal O}(d_{\scriptsize{\rm BL}})$ generated by the metric $d_{\scriptsize{\rm BL}}$ is metrizable anyway. Thus the equivalence of (a) and (g) in Theorem \ref{Portemanteau} implies ${\cal O}_{\rm w}={\cal O}(d_{\scriptsize{\rm BL}})$, i.e.\ the metric $d_{\scriptsize{\rm BL}}$ indeed generates the weak topology ${\cal O}_{\rm w}$.
\end{proof}


\section{Convergence in distribution and convergence in probability for the open-ball $\sigma$-algebra}\label{appendix Conv Iin Dist and Prob}

Let $(\bE,d)$ be a metric space and ${\cal B}^\circ$ the open-ball $\sigma$-algebra on $\bE$. A sequence $(X_n)$ of $(\bE,{\cal B}^\circ)$-valued random variables is said to {\em converge in distribution$^\circ$} to an $(\bE,{\cal B}^\circ)$-valued random variable $X_0$ if the sequence $({\rm law}\{X_n\})$ weak$^\circ$ly converges to ${\rm law}\{X_0\}$. In this case, we write $X_n\leadsto^\circ X_0$. In the case where the random variables $X_n$, $n\in\N_0$, are all defined on the same probability space $(\Omega,{\cal F},\pr)$ the sequence $(X_n)$ is said to {\em converge in probability$^\circ$} to $X_0$ if the mappings $\omega\mapsto d(X_n(\omega),X_0(\omega))$, $n\in\N$, are $({\cal F},{\cal B}(\R_+))$-measurable and satisfy
\begin{equation}\label{def conv in prob - eq}
    \lim_{n\to\infty}\pr[d(X_n,X_0)\ge\varepsilon]=0\quad\mbox{ for all }\varepsilon>0.
\end{equation}
In this case, we write $X_n\rightarrow^{{\sf p},\circ} X_0$. As usual, by {\em $\pr$-almost sure convergence} of the sequence $(X_n)$ to $X_0$, abbreviated by $X_n\rightarrow X_0$ $\pr$-a.s., we will mean that there exists a set $N\in{\cal F}$ with that $\pr[N]=0$ and $d(X_n(\omega),X_0(\omega))\rightarrow 0$ for all $\omega\in\Omega\setminus N$.

\begin{proposition}\label{as conv implies conv prob}
Let $X_n$, $n\in\N_0$, be $(\bE,{\cal B}^\circ)$-valued random variables on a common probability space $(\Omega,{\cal F},\pr)$, and assume that the mappings $\omega\mapsto d(X_n(\omega),X_0(\omega))$, $n\in\N$, are $({\cal F},{\cal B}(\R_+))$-measurable. Then $X_n\rightarrow X_0$ $\pr$-a.s.\ implies $X_n\rightarrow^{{\sf p},\circ} X_0$.
\end{proposition}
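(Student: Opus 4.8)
The plan is to run the classical argument that $\pr$-almost sure convergence implies convergence in probability, taking care only of the measurability subtleties that arise because we work with the open-ball $\sigma$-algebra ${\cal B}^\circ$ rather than the Borel $\sigma$-algebra. First I would fix $\varepsilon>0$ and set $A_n:=\{\omega\in\Omega:\,d(X_n(\omega),X_0(\omega))\ge\varepsilon\}$ for every $n\in\N$. By the standing hypothesis that $\omega\mapsto d(X_n(\omega),X_0(\omega))$ is $({\cal F},{\cal B}(\R_+))$-measurable, each $A_n$ lies in ${\cal F}$, so that $\pr[A_n]$ is well defined; this is exactly the point at which the assumption is used, and where the open-ball setting differs from the usual Borel setting (in which such measurability would be automatic). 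The goal is then to show $\pr[A_n]\to 0$.

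Next I would pass to the tail events. Put $B_m:=\bigcup_{n\ge m}A_n$, which lies in ${\cal F}$ as a countable union of sets in ${\cal F}$. The sequence $(B_m)$ is non-increasing, and $\bigcap_{m\in\N}B_m=\limsup_{n\to\infty}A_n$ is precisely the set of those $\omega$ for which $d(X_n(\omega),X_0(\omega))\ge\varepsilon$ holds for infinitely many $n$. Invoking the hypothesis $X_n\to X_0$ $\pr$-a.s., there is a $\pr$-null set $N\in{\cal F}$ with $d(X_n(\omega),X_0(\omega))\to 0$ for every $\omega\in\Omega\setminus N$. For each such $\omega$ there is an index beyond which $d(X_n(\omega),X_0(\omega))<\varepsilon$, so $\omega$ belongs to only finitely many $A_n$ and hence $\omega\notin\limsup_{n}A_n$. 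This yields the inclusion $\limsup_{n}A_n\subseteq N$, and therefore $\pr[\limsup_{n}A_n]=0$.

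Finally I would invoke continuity from above of the finite measure $\pr$: since $(B_m)$ decreases to $\bigcap_{m}B_m=\limsup_{n}A_n$, we get $\pr[B_m]\to\pr[\limsup_{n}A_n]=0$. Because $A_n\subseteq B_n$ for every $n$, monotonicity of $\pr$ gives $\pr[A_n]\le\pr[B_n]\to 0$. As $\varepsilon>0$ was arbitrary, this establishes (\ref{def conv in prob - eq}) and hence $X_n\rightarrow^{{\sf p},\circ}X_0$. I do not expect any genuine obstacle: the only nontrivial ingredient is the measurability of the sets $A_n$ (and of their countable unions and intersections), which is guaranteed precisely by the explicit hypothesis on $\omega\mapsto d(X_n(\omega),X_0(\omega))$, and everything else is the standard finite-measure argument requiring nothing beyond continuity from above and the definition of $\pr$-a.s.\ convergence.
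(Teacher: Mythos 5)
Your proof is correct and follows essentially the same route as the paper: both arguments reduce the claim to showing that the event $\limsup_n\{d(X_n,X_0)\ge\varepsilon\}$ is measurable and $\pr$-null, and then conclude via the reverse Fatou inequality (which you simply unpack by hand using continuity from above of $\pr$ along the decreasing tail unions $B_m$). The only cosmetic difference is that the paper phrases the null-set step through the measurable function $\limsup_n d(X_n,X_0)$ rather than directly through the inclusion $\limsup_n A_n\subseteq N$; the substance is identical.
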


\begin{proof}
By assumption the variables $d(X_n,X_0)$, $n\in\N$, are $({\cal F},{\cal B}(\R_+))$-measurable, and therefore the variable $\limsup_{n\to\infty}d(X_n,X_0)$ is $({\cal F},{\cal B}(\R_+))$-measurable. Since $X_n\rightarrow X_0$ $\pr$-a.s., we obtain $\pr[\limsup_{n\to\infty}d(X_n,X_0)=0]=1$. This implies
$$
    \pr\big[\limsup_{n\to\infty}\{d(X_n,X_0)\ge\varepsilon\}\big]\le\pr\big[\limsup_{n\to\infty}d(X_n,X_0)\ge\varepsilon\big]=0\quad\mbox{ for all }\varepsilon>0
$$
which together with the reverse of Fatou's lemma gives $\limsup_{n\to\infty}\pr[d(X_n,X_0)\ge\varepsilon]=0$ for every $\varepsilon>0$.
\end{proof}

When $X_0$ takes almost surely values in a separable measurable set, then convergence in probability$^\circ$ implies convergence in distribution$^\circ$ of $X_n$ to $X_0$:

\begin{proposition}\label{conv prob implies conv distr}
Let $X_n$, $n\in\N_0$, be $(\bE,{\cal B}^\circ)$-valued random variables on a common probability space $(\Omega,{\cal F},\pr)$, and assume that $\pr[X_0\in\bE_0]=1$ for some separable $\bE_0\in{\cal B}^\circ$. Then $X_n\rightarrow^{{\sf p},\circ} X_0$ implies $X_n\leadsto^\circ X_0$.
\end{proposition}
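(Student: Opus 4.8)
The plan is to deduce weak$^\circ$ convergence of the associated laws directly from the Portmanteau theorem (Theorem \ref{Portemanteau}). Put $\mu_n:=\mbox{\rm law}\{X_n\}$ for $n\in\N_0$. By hypothesis $\mu_0[\bE_0]=\pr[X_0\in\bE_0]=1$ for some separable $\bE_0\in{\cal B}^\circ$, so Theorem \ref{Portemanteau} applies to the sequence $(\mu_n)$. It therefore suffices to verify condition (b) of that theorem, namely $\int f\,d\mu_n\rightarrow\int f\,d\mu_0$, equivalently $\ex[f(X_n)]\rightarrow\ex[f(X_0)]$, for every uniformly continuous $f\in C_{\rm b}^\circ$; the implication (b)$\Rightarrow$(a) then yields $\mu_n\Rightarrow^\circ\mu_0$, which is exactly $X_n\leadsto^\circ X_0$. (One could equally work with the class ${\rm BL}_1^\circ$ via condition (f), but the uniformly continuous functions of condition (b) are the most convenient here.)

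First I would fix such an $f$ together with an arbitrary $\varepsilon>0$, and use the uniform continuity of $f$ to choose $\delta>0$ so that $d(x,y)<\delta$ implies $|f(x)-f(y)|<\varepsilon$. Starting from the triangle inequality for integrals,
$$
    \big|\ex[f(X_n)]-\ex[f(X_0)]\big|\,\le\,\ex\big[|f(X_n)-f(X_0)|\big],
$$
I would split the right-hand integral according to the event $\{d(X_n,X_0)<\delta\}$ and its complement. On the former event the integrand is at most $\varepsilon$, while on the latter it is at most $2\|f\|_\infty$, which gives
$$
    \ex\big[|f(X_n)-f(X_0)|\big]\,\le\,\varepsilon+2\|f\|_\infty\,\pr\big[d(X_n,X_0)\ge\delta\big].
$$
Since $X_n\rightarrow^{{\sf p},\circ}X_0$, the probability on the right tends to $0$, whence $\limsup_{n\to\infty}|\ex[f(X_n)]-\ex[f(X_0)]|\le\varepsilon$; letting $\varepsilon\downarrow0$ completes the verification of condition (b).

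The step I would flag as the main (though mild) obstacle is measurability rather than estimation, and it is precisely where the open-ball setting differs from the Borel one. On a possibly non-separable $\bE$ a merely continuous bounded function need not be $({\cal B}^\circ,{\cal B}(\R))$-measurable, so it is essential that Theorem \ref{Portemanteau} is formulated with uniformly continuous members of $C_{\rm b}^\circ$: for such $f$ the compositions $f(X_n)$ and $f(X_0)$ are genuinely $({\cal F},{\cal B}(\R))$-measurable and bounded, hence integrable, and $|f(X_n)-f(X_0)|$ is $({\cal F},{\cal B}(\R))$-measurable as well. Moreover the indicator $\eins_{\{d(X_n,X_0)\ge\delta\}}$ is ${\cal F}$-measurable because the very definition of convergence in probability$^\circ$ builds in the $({\cal F},{\cal B}(\R_+))$-measurability of $\omega\mapsto d(X_n(\omega),X_0(\omega))$. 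Consequently the displayed splitting is legitimate and the whole argument proceeds with ordinary expectations, with no recourse to outer integrals.
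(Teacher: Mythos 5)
Your proof is correct and follows essentially the same route as the paper's: the paper also splits the integral on the event $\{d(X_n,X_0)\ge\varepsilon/2\}$ and concludes via the Portmanteau theorem, merely using the class ${\rm BL}_1^\circ$ and the implication (f)$\Rightarrow$(a) instead of the uniformly continuous functions of condition (b). Your additional remarks on measurability are accurate and consistent with the paper's setup.
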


\begin{proof}
For any $f\in {\rm BL}_1^\circ$ we have $|\int f\,d\pr_{X_n}-\int f\,d\pr_{X_0}|\le2\,\pr[d(X_n,X_0)\ge\varepsilon/2]+\varepsilon/2$ for all $\varepsilon>0$, i.e.\ $\int f\,d\pr_{X_n}\to \int f\,d\pr_{X_0}$. The claim then follows by the implication (f)$\Rightarrow$(a) in the Portmanteau theorem \ref{Portemanteau}.
\end{proof}

The following lemma implies that the measurability condition in the definition of convergence in probability$^\circ$ is automatically satisfied when $X_0$ is constant, i.e.\ when $X_0(\cdot)=x$ for some $x\in\bE$.

\begin{lemma}\label{Skorohod - Lemma - Lemma}
For every $x\in\bE$, the mapping $y\mapsto d(x,y)$ is continuous and $({\cal B}^\circ,{\cal B}(\R))$-measurable.
\end{lemma}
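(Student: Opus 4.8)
The plan is to treat the two assertions separately: continuity is entirely routine, while measurability is the only place where the possible non-separability of $(\bE,d)$ could in principle cause trouble. I would set $f_x(y):=d(x,y)$ for the fixed point $x\in\bE$ throughout.

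For continuity, I would simply invoke the (reverse) triangle inequality to obtain
$$
    |f_x(y)-f_x(y')|=|d(x,y)-d(x,y')|\le d(y,y')\qquad\mbox{for all }y,y'\in\bE.
$$
Thus $f_x$ is Lipschitz continuous with Lipschitz constant $1$, and in particular continuous.

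For measurability, the key observation is that, although a general continuous real-valued function on a non-separable metric space need not be $({\cal B}^\circ,{\cal B}(\R))$-measurable, the specific function $f_x$ has sublevel sets that are exactly open balls centred at $x$. Concretely, for every $r>0$ one has
$$
    f_x^{-1}\big((-\infty,r)\big)=\{y\in\bE:\,d(x,y)<r\}=B_r(x),
$$
which lies in ${\cal B}^\circ$ by the very definition of the open-ball $\sigma$-algebra, while for $r\le 0$ the preimage $f_x^{-1}((-\infty,r))$ is empty and hence also lies in ${\cal B}^\circ$. Since the collection $\{(-\infty,r):r\in\R\}$ generates the Borel $\sigma$-algebra ${\cal B}(\R)$, it suffices to verify that the preimages of these generating sets lie in ${\cal B}^\circ$, which is precisely what the last display shows. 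I would then conclude that $f_x$ is $({\cal B}^\circ,{\cal B}(\R))$-measurable.

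No step here poses a genuine obstacle. The only conceivable difficulty --- namely that ${\cal B}^\circ$ may be strictly smaller than the Borel $\sigma$-algebra ${\cal B}$, so that continuity alone does not yield measurability (as noted in Appendix \ref{sec weak conv for open ball}) --- is circumvented exactly because the open balls that generate ${\cal B}^\circ$ are the sublevel sets of $f_x$. In other words, the lemma holds essentially by construction of the open-ball $\sigma$-algebra, and the whole argument is a one-line verification on a generating family of the target $\sigma$-algebra.
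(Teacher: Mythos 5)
Your proposal is correct and follows essentially the same route as the paper: the paper also dismisses continuity as obvious and establishes measurability by observing that $\{d(x,\cdot)<a\}=B_a(x)\in{\cal B}^\circ$ for $a>0$ and $=\emptyset$ for $a\le 0$. Your additional remarks (the reverse triangle inequality and the explicit appeal to a generating family of ${\cal B}(\R)$) merely spell out details the paper leaves implicit.
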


\begin{proof}
The continuity is obvious, and the $({\cal B}^\circ,{\cal B}(\R))$-measurability follows by
$$
    \{d(x,\cdot)<a\}=\{y\in\bE:\,d(x,y)<a\}=B_{a}(x)\in{\cal B}^\circ\quad\mbox{for every $a>0$}
$$
and $\{d(x,\cdot)<a\}=\emptyset\in{\cal B}^\circ$ for every $a\le 0$.
\end{proof}

For constant $X_0$ we also have that convergence in probability$^\circ$ of $X_n$ to $X_0$ is equivalent to convergence in distribution$^\circ$ of $X_n$ to $X_0$:

\begin{proposition}\label{conv prob equiv conv distr}
Let $X_n$, $n\in\N$, be $(\bE,{\cal B}^\circ)$-valued random variables on a common probability space $(\Omega,{\cal F},\pr)$, and $x_0\in\bE$ be a constant. Then:
\begin{itemize}
    \item[(i)] $X_n\rightarrow x_0$ $\pr$-a.s.\ implies $X_n\rightarrow^{{\sf p},\circ} x_0$.
    \item[(ii)] $X_n\rightarrow^{{\sf p},\circ} x_0$ if and only if $X_n\leadsto^\circ x_0$.
\end{itemize}
\end{proposition}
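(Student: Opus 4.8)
The plan is to treat the two assertions separately and to observe that the only substantive work lies in the forward implication of (ii); everything else is an application of results already established. Throughout I exploit that $x_0$ is a fixed constant, so that Lemma \ref{Skorohod - Lemma - Lemma} applies directly to the center $x_0$.

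For part (i), I first note that the measurability hypothesis of Proposition \ref{as conv implies conv prob} is automatic in this setting. Indeed, by Lemma \ref{Skorohod - Lemma - Lemma} the map $y\mapsto d(x_0,y)$ is $({\cal B}^\circ,{\cal B}(\R))$-measurable, so composing it with the $({\cal F},{\cal B}^\circ)$-measurable map $X_n$ shows that $\omega\mapsto d(X_n(\omega),x_0)$ is $({\cal F},{\cal B}(\R))$-measurable for every $n\in\N$. Part (i) then follows at once from Proposition \ref{as conv implies conv prob} applied with $X_0\equiv x_0$. For the ``only if'' direction of part (ii), I observe that the singleton $\{x_0\}=\bigcap_{n\in\N}B_{1/n}(x_0)$ lies in ${\cal B}^\circ$ and is trivially separable, while $\pr[x_0\in\{x_0\}]=1$; hence Proposition \ref{conv prob implies conv distr}, applied with $\bE_0=\{x_0\}$, gives that $X_n\rightarrow^{{\sf p},\circ}x_0$ implies $X_n\leadsto^\circ x_0$.

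The real content is the ``if'' direction of part (ii), where convergence in distribution$^\circ$ to the constant $x_0$ must be upgraded to convergence in probability$^\circ$. The plan is to test the laws against a single, carefully chosen bounded-Lipschitz function. I set $g(y):=\min\{d(x_0,y),1\}$. Using Lemma \ref{Skorohod - Lemma - Lemma} (composition of $d(x_0,\cdot)$ with the continuous truncation) one checks that $g$ is $({\cal B}^\circ,{\cal B}(\R))$-measurable, bounded by $1$, and $1$-Lipschitz, so that $g\in{\rm BL}_1^\circ$. Since $X_n\leadsto^\circ x_0$ and the limit law $\delta_{x_0}$ concentrates on the separable set $\{x_0\}$, the implication (a)$\Rightarrow$(f) of the Portmanteau theorem \ref{Portemanteau} yields $\int g\,d\pr_{X_n}\to\int g\,d\delta_{x_0}=g(x_0)=0$. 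For any $\varepsilon\in(0,1]$ the events $\{d(X_n,x_0)\ge\varepsilon\}$ and $\{g(X_n)\ge\varepsilon\}$ coincide, so Markov's inequality (recall $g\ge 0$) gives $\pr[d(X_n,x_0)\ge\varepsilon]\le\varepsilon^{-1}\int g\,d\pr_{X_n}\to 0$; for $\varepsilon>1$ one simply bounds $\pr[d(X_n,x_0)\ge\varepsilon]\le\pr[d(X_n,x_0)\ge 1]\to 0$. Combined with the measurability already recorded in part (i), this establishes $X_n\rightarrow^{{\sf p},\circ}x_0$.

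The main obstacle, such as it is, is confirming that the truncated distance $g$ genuinely belongs to ${\rm BL}_1^\circ$ — in particular its open-ball measurability, which is precisely what Lemma \ref{Skorohod - Lemma - Lemma} supplies — and verifying the elementary set identity $\{d(X_n,x_0)\ge\varepsilon\}=\{g(X_n)\ge\varepsilon\}$ for $\varepsilon\le 1$. Once these are in place, the Portmanteau theorem and Markov's inequality close the argument mechanically, and no further separability or tightness considerations are needed because the limit is a single atom.
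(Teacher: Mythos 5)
Your proposal is correct and follows essentially the same route as the paper: part (i) via Proposition \ref{as conv implies conv prob} together with Lemma \ref{Skorohod - Lemma - Lemma}, the ``only if'' direction of (ii) via Proposition \ref{conv prob implies conv distr} applied to the separable set $\{x_0\}=\bigcap_{n\in\N}B_{1/n}(x_0)\in{\cal B}^\circ$, and the ``if'' direction by testing against the truncated distance $\min\{d(\cdot,x_0),1\}$ and applying Markov's inequality. The only cosmetic difference is that you route the convergence of the test integral through the Portmanteau implication (a)$\Rightarrow$(f) with the function viewed as an element of ${\rm BL}_1^\circ$, whereas the paper invokes the defining property of weak$^\circ$ convergence for $C_{\rm b}^\circ$ directly; your explicit case split at $\varepsilon>1$ is a small extra care the paper elides.
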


\begin{proof}
Part (i) follows from Proposition \ref{as conv implies conv prob} and Lemma \ref{Skorohod - Lemma - Lemma}. To prove part (ii), first assume $X_n\leadsto^\circ x_0$. Set $f(x):=\min\{d(x,x_0);1\}$, $x\in\bE$, and note that $f\in C_{\rm b}^\circ$. By Markov's inequality and Lemma \ref{Skorohod - Lemma - Lemma} we obtain
$$
    \pr[d(X_n,x_0)\ge\varepsilon]\,\le\,\frac{1}{\varepsilon}\int f(X_n(\omega))\,\pr[d\omega]\,\longrightarrow\,\frac{1}{\varepsilon}\int f(x_0)\,\pr[d\omega]=\,0,\quad n\to\infty
$$
for every $\varepsilon>0$. That is, $X_n\rightarrow^\circ x_0$. The other direction in part (ii) follows from Proposition \ref{conv prob implies conv distr}, because the set $\{x_0\}=\bigcap_{n\in\N}B_{1/n}(x_0)$ is separable and lies in ${\cal B}^\circ$.
\end{proof}

Recall that ${\cal B}^\circ={\cal B}$ when $(\bE,d)$ is separable. In this case we suppress the superscript $^\circ$ and write simply $\leadsto$, $\rightarrow^{\sf p}$, convergence in distribution, and convergence in probability instead of $\leadsto^\circ$, $\rightarrow^{{\sf p},\circ}$, convergence in distribution$^\circ$, and convergence in probability$^\circ$, respectively.


\section{An extended Continuous Mapping theorem and a delta-method for the open-ball $\sigma$-algebra}\label{appendix Functional Delta-Method}

As mentioned in the introduction, Theorem \ref{modified delta method for the bootstrap} is based on a generalization of Theorem 4.1 in \citet{BeutnerZaehle2010}, which in turn is a generalization of the classical functional delta-method in the form of Theorem 3 of \cite{Gill1989}. The proof of the generalization of Theorem 4.1 in \citet{BeutnerZaehle2010} is based on the extended Continuous Mapping theorem \ref{Skorohod - CMT} below. An extended Continuous Mapping theorem for convergence in distribution for the Borel $\sigma$-algebra can be found in \citet[Theorem 4.27]{Kallenberg2002}. A corresponding result for convergence in distribution in the Hoffmann-J{\o}rgensen is given, for example, in \citet[Theorem 1.11.1]{van der Vaart Wellner 1996}. However, we could not find a version of this result for convergence in distribution$^\circ$ for the open-ball $\sigma$-algebra. So we include a proof for Theorem \ref{Skorohod - CMT}. Note that Theorem \ref{Skorohod - CMT} is a generalization of the ``ordinary'' Continuous Mapping theorem for convergence in distribution$^\circ$ for the open-ball $\sigma$-algebra as given by \citet[Theorem 6.4]{Billingsley1999}. Let $(\bE,d)$ and $(\widetilde\bE,d_{\widetilde\bE})$ be metric spaces and ${\cal B}^\circ$ and $\widetilde{\cal B}^\circ$ be the open-ball $\sigma$-algebras on $\bE$ and $\widetilde\bE$, respectively.

\begin{theorem}\label{Skorohod - CMT}
{\bf (Extended CMT for random variables)}
Let $\bE_n\subseteq\bE$ and $\xi_n$ be an $(\bE,{\cal B}^\circ)$-valued random variable on some probability space $(\Omega_n,{\cal F}_n,\pr_n)$ such that $\xi_n(\Omega_n)\subseteq\bE_n$, $n\in\N$. Let $\xi_0$ be an $(\bE,{\cal B}^\circ)$-valued random variable on some probability space $(\Omega_0,{\cal F}_0,\pr_0)$ such that $\xi_0(\Omega_0)\subseteq\bE_0$ for some separable $\bE_0\in{\cal B}^\circ$. Let $h_n: \bE_n \rightarrow\widetilde\bE$ be a map such that the map $h_n(\xi_n):\Omega_n\to\widetilde\bE$ is $({\cal F}_n,\widetilde{\cal B}^\circ)$-measurable, $n\in\N$. Let $h_0: \bE_0 \rightarrow\widetilde\bE$ be a $({\cal B}_0^\circ,\widetilde{\cal B}^\circ)$-measurable map, where ${\cal B}_0^\circ:={\cal B}^\circ\cap\bE_0$ ($\subseteq {\cal B}^\circ$). Moreover, assume that the following two assertions hold:
\begin{itemize}
    \item[(a)] $\xi_n \leadsto^\circ\xi_0$.
    \item[(b)] For every $x_n\in\bE_n$, $n\in\N_0$, we have $\widetilde d(h_n(x_n),h_0(x_0))\rightarrow 0$ when $d(x_n,x_0)\rightarrow 0$.
\end{itemize}
Then $h_n(\xi_n) \leadsto^\circ h_0(\xi_0)$.
\end{theorem}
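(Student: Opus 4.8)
The plan is to prove the equivalent assertion that $\int\widetilde f\,d(\mbox{\rm law}\{h_n(\xi_n)\})\to\int\widetilde f\,d(\mbox{\rm law}\{h_0(\xi_0)\})$ for every $\widetilde f\in C_{\rm b}^\circ$ on $\widetilde\bE$; by the definition of weak$^\circ$ convergence recalled in Appendix \ref{sec weak conv for open ball} this is exactly $h_n(\xi_n)\leadsto^\circ h_0(\xi_0)$, and at this level no separability of the \emph{target} law is required. The idea is to transport everything to a single probability space on which $\xi_n\to\xi_0$ holds pointwise, so that condition (b) can be applied argumentwise.

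First I would invoke an almost sure representation. Since $\xi_n\leadsto^\circ\xi_0$ by (a) and $\xi_0$ concentrates on the separable set $\bE_0$, there is a common probability space $(\Omega^*,{\cal F}^*,\pr^*)$ carrying random elements $\widetilde\xi_n=\xi_n\circ\phi_n$, where each $\phi_n:\Omega^*\to\Omega_n$ is measure preserving, with $\widetilde\xi_n\in\bE_n$, $\widetilde\xi_0\in\bE_0$ and $d(\widetilde\xi_n,\widetilde\xi_0)\to0$ $\pr^*$-almost surely (the map $\omega\mapsto d(\widetilde\xi_n(\omega),\widetilde\xi_0(\omega))$ being measurable). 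This is the analogue for convergence in distribution$^\circ$ of the classical Dudley--Wichura representation (cf.\ \citet[Theorem 1.10.4]{van der Vaart Wellner 1996} and \citet{Dudley1966,Dudley1967}), and it is precisely here that the separability of $\bE_0$ enters. The measure-preserving form $\widetilde\xi_n=\xi_n\circ\phi_n$ is what I need next: although $h_n$ itself is not assumed measurable on $\bE$, the composition $h_n\circ\xi_n$ is $({\cal F}_n,\widetilde{\cal B}^\circ)$-measurable, so $h_n(\widetilde\xi_n)=(h_n\circ\xi_n)\circ\phi_n$ is $({\cal F}^*,\widetilde{\cal B}^\circ)$-measurable and has the same law as $h_n(\xi_n)$; likewise $h_0(\widetilde\xi_0)=(h_0\circ\xi_0)\circ\phi_0$, using that $h_0$ is $({\cal B}_0^\circ,\widetilde{\cal B}^\circ)$-measurable and that $\widetilde\xi_0$ is $\bE_0$-valued, has the same law as $h_0(\xi_0)$.

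Next I would run the convergence argument on $\Omega^*$. Fix $\widetilde f\in C_{\rm b}^\circ$. On the $\pr^*$-full set where $d(\widetilde\xi_n,\widetilde\xi_0)\to0$, set $x_n:=\widetilde\xi_n(\omega)\in\bE_n$ and $x_0:=\widetilde\xi_0(\omega)\in\bE_0$; then $d(x_n,x_0)\to0$, so condition (b) yields $d_{\widetilde\bE}(h_n(\widetilde\xi_n(\omega)),h_0(\widetilde\xi_0(\omega)))\to0$, and the continuity of $\widetilde f$ gives $\widetilde f(h_n(\widetilde\xi_n(\omega)))\to\widetilde f(h_0(\widetilde\xi_0(\omega)))$. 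Since $\widetilde f$ is bounded and each integrand is $({\cal F}^*,{\cal B}(\R))$-measurable, the dominated convergence theorem shows $\int\widetilde f(h_n(\widetilde\xi_n))\,d\pr^*\to\int\widetilde f(h_0(\widetilde\xi_0))\,d\pr^*$, and by the law identities of the previous step this is the desired convergence $\int\widetilde f\,d(\mbox{\rm law}\{h_n(\xi_n)\})\to\int\widetilde f\,d(\mbox{\rm law}\{h_0(\xi_0)\})$. As $\widetilde f\in C_{\rm b}^\circ$ was arbitrary, $h_n(\xi_n)\leadsto^\circ h_0(\xi_0)$.

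The step I expect to be the main obstacle is securing the almost sure representation in the required measure-preserving form for convergence in distribution$^\circ$: the ambient spaces $\bE$ and $\bE_n$ are not assumed separable, only the support $\bE_0$ of the limit is, and one must also ensure that the representation interacts correctly with the maps $h_n$, which are only known to be measurable after composition with $\xi_n$. This is exactly why the representation is taken in the perfect form $\widetilde\xi_n=\xi_n\circ\phi_n$; with mere equality in law one could not conclude $h_n(\widetilde\xi_n)\stackrel{\rm d}{=}h_n(\xi_n)$, since $h_n$ need not be a measurable function on all of $\bE$. One could instead try to argue directly through the Portmanteau theorem \ref{Portemanteau}, but since $h_0$ is defined only on $\bE_0$ while $\xi_n$ takes values in $\bE_n$, the comparison of $\int\widetilde f\circ h_n\,d(\mbox{\rm law}\{\xi_n\})$ with $\int\widetilde f\circ h_0\,d(\mbox{\rm law}\{\xi_0\})$ becomes considerably more delicate, so the representation route is preferable.
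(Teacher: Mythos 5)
Your route is genuinely different from the paper's. The paper never leaves the distributional level: by the implication (d)$\Rightarrow$(a) of the Portmanteau theorem \ref{Portemanteau} it reduces the claim to $\liminf_{n\to\infty}\pr_n\circ h_n(\xi_n)^{-1}[\widetilde G]\ge\pr_0\circ h_0(\xi_0)^{-1}[\widetilde G]$ for open $\widetilde G\in\widetilde{\cal B}^\circ$, derives from condition (b) the inclusion $h_0^{-1}(\widetilde G)\cap\bE_0\subseteq\bigcup_{m}\big(\{\bigcap_{k\ge m}h_k^{-1}(\widetilde G)\}^{\rm int}\cap\bE_0\big)$, and then uses Lindel\"of's theorem on the separable set $\bE_0$ to squeeze in an increasing sequence of sets $G_m$ that are countable unions of open balls --- hence open \emph{and} in ${\cal B}^\circ$ --- to which the Portmanteau inequality for $\xi_n\leadsto^\circ\xi_0$ can be applied. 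Your plan instead transports everything to one probability space via an almost sure representation and applies (b) pointwise; if the representation is available this is shorter and more transparent, and your insistence on the composed, measure-preserving form $\widetilde\xi_n=\xi_n\circ\phi_n$ (without which $h_n(\widetilde\xi_n)$ would be neither well defined nor measurable, since $h_n$ is only defined on $\bE_n$ and only $h_n\circ\xi_n$ is assumed measurable) is exactly the right observation. Reducing to test functions $\widetilde f\in C_{\rm b}^\circ$ and finishing by dominated convergence is also fine.

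The gap, which you yourself flag, is that the pivotal tool is asserted by analogy rather than established. \citet[Theorem 1.10.4 and Addendum 1.10.5]{van der Vaart Wellner 1996} is stated for convergence in distribution in the Hoffmann-J{\o}rgensen sense, not for $\leadsto^\circ$, so it cannot be cited as it stands. To invoke it you would have to (i) show that weak$^\circ$ convergence of the ${\cal B}^\circ$-measurable maps $\xi_n$ to a limit concentrated on the separable $\bE_0\in{\cal B}^\circ$ implies Hoffmann-J{\o}rgensen convergence --- this is true for ball-measurable maps with separable limits, going back to \citet{Dudley1966,Dudley1967} and treated in the chapter of \citet{van der Vaart Wellner 1996} on the ball $\sigma$-field, but it is a theorem requiring proof, not a remark; (ii) check that $\xi_0$ is Borel measurable as required there (it is, because ${\cal B}^\circ\cap\bE_0$ coincides with the trace Borel $\sigma$-algebra on the separable set $\bE_0$); and (iii) downgrade the resulting \emph{outer} almost sure convergence to genuine pointwise convergence on a measurable full set, e.g.\ via \citet[Proposition 1.1]{Dudley2010}, before (b) can be applied $\omega$-wise. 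None of these steps fails, but together they amount to importing a body of theory whose applicability in the $\leadsto^\circ$ setting is precisely what the paper is at pains to verify by hand; the paper's direct Portmanteau--Lindel\"of argument is self-contained and of comparable length. As written, your proposal is therefore incomplete at its key step, though it becomes a valid alternative proof once the bridge to the Hoffmann-J{\o}rgensen representation theorem (or a direct representation theorem for the open-ball $\sigma$-algebra) is made explicit.
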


\begin{remarknorm}
Note that we do {\em not} assume in Theorem \ref{Skorohod - CMT} that the maps $h_n$, $n\in\N$, are $({\cal B}^\circ,\widetilde{\cal B}^\circ)$-measurable. This implies that for $n\in\N$ the law $\pr_n\circ(h_n(\xi_n))^{-1}$ of $h_n(\xi_n)$ can not necessarily be represented as the image law of $\xi_n$'s law $\pr_n\circ\xi_n^{-1}$ w.r.t.\ $h_n$.
{\hspace*{\fill}$\Diamond$\par\bigskip}
\end{remarknorm}

\begin{proof}{\bf of Theorem \ref{Skorohod - CMT}}\,
According to the implication (d)$\Rightarrow$(a) in the Portmanteau theorem \ref{Portemanteau}, it suffices to show that $\liminf_{n\to\infty}\pr_n\circ h_n(\xi_n)^{-1}[\widetilde G]\ge\pr_0\circ h_0(\xi_0)^{-1}[\widetilde G]$ for every open set $\widetilde G\in\widetilde{\cal B}^\circ$. So, let $\widetilde G\in\widetilde{\cal B}^\circ$ be open. First we note that
\begin{equation}\label{Skorohod - CMT - Eric - PROOF - 10}
    h_0^{-1}(\widetilde G)\cap\bE_0\,\subseteq\,\bigcup_{m=1}^\infty\Big(\Big\{\bigcap_{k=m}^\infty h_k^{-1}(\widetilde G)\Big\}^{\scriptsize{\rm int}}\bigcap\,\bE_0\Big),
\end{equation}
where the superscript $^{\scriptsize{\rm int}}$ refers to the interior of a set. Indeed: For every $x_0\in h_0^{-1}(\widetilde G)\cap\bE_0$ there exists an $m\in\N$ and a neighborhood $U$ of $x_0$ such that $h_k(x)\in\widetilde G$ for all $k\ge m$ and $x\in U$. Otherwise we could find for every $m\in\N$ some $k_m\ge m$ and $x_m\in B_{1/m}(x_0)$ such that $h_{k_m}(x_m)\not\in\widetilde G$. But then we had $d(x_m,x_0)\rightarrow 0$ and $\widetilde d(h_{k_m}(x_m),h_0(x_0))\not\rightarrow 0$ (take into account that $h_0(x_0)\in\widetilde G$ and $\widetilde G$ is open), which contradicts assumption (b). Hence $U\subseteq\bigcap_{k=m}^\infty h_k^{-1}(\widetilde G)$ and thus $x_0\in\{\bigcap_{k=m}^\infty h_k^{-1}(\widetilde G)\}^{\scriptsize{\rm int}}$. In particular, $h_0^{-1}(\widetilde G)\cap\bE_0\subseteq\bigcup_{m=1}^\infty\{\bigcap_{k=m}^\infty h_k^{-1}(\widetilde G)\}^{\scriptsize{\rm int}}$. Now (\ref{Skorohod - CMT - Eric - PROOF - 10}) is obvious.

Further, for every $m\in\N$ we can find a union $G_m$ of countably many open balls such that
\begin{equation}\label{Skorohod - CMT - Eric - PROOF - 20}
    \Big\{\bigcap_{k=m}^\infty h_k^{-1}(\widetilde G)\Big\}^{\scriptsize{\rm int}}\bigcap\,\bE_0\,\subseteq\,G_m\,\subseteq\,\Big\{\bigcap_{k=m}^\infty h_k^{-1}(\widetilde G)\Big\}^{\scriptsize{\rm int}},
\end{equation}
and we may and do assume $G_1\subseteq G_2\subseteq\cdots$. To prove this one can proceed by an induction on $m$. First let $m=1$. For every $x\in\{\bigcap_{k=1}^\infty h_k^{-1}(\widetilde G)\}^{\scriptsize{\rm int}}$ we can find an open ball $B_{r_x}(x)$ around $x$ which is contained in $\{\bigcap_{k=1}^\infty h_k^{-1}(\widetilde G)\}^{\scriptsize{\rm int}}$, because the latter set is open. The system which consists of the open balls $B_{r_x}(x)$, $x\in \{\bigcap_{k=1}^\infty h_k^{-1}(\widetilde G)\}^{\scriptsize{\rm int}}$, provides an open cover of $\{\bigcap_{k=1}^\infty h_k^{-1}(\widetilde G)\}^{\scriptsize{\rm int}}\bigcap\,\bE_0$. Since the latter set is separable (recall that $\bE_0$ was assumed to be separable), Lindelöf's theorem ensures that there is a countable subcover. The set $G_1$ can now be defined as the union of the elements of this subcover. Next assume that $G_1,\ldots,G_M$ are unions of countably many open balls such that $G_1\subseteq\cdots\subseteq G_M$ and (\ref{Skorohod - CMT - Eric - PROOF - 20}) holds for $m=1,\ldots,M$. For every $x\in\{\bigcap_{k=M+1}^\infty h_k^{-1}(\widetilde G)\}^{\scriptsize{\rm int}}$ we can find an open ball $B_{r_x}(x)$ around $x$ which is contained in $\{\bigcap_{k=M+1}^\infty h_k^{-1}(\widetilde G)\}^{\scriptsize{\rm int}}$, because the latter set is open. The system which consists of $B_{r_x}(x)$, $x\in\{\bigcap_{k=M+1}^\infty h_k^{-1}(\widetilde G)\}^{\scriptsize{\rm int}}\setminus\{\bigcap_{k=M}^\infty h_k^{-1}(\widetilde G)\}^{\scriptsize{\rm int}}$ and of the countably many open balls which unify to $G_M$ provides an open cover of $\{\bigcap_{k=M+1}^\infty h_k^{-1}(\widetilde G)\}^{\scriptsize{\rm int}}\bigcap\,\bE_0$. Since the latter set is separable, Lindelöf's theorem ensures that there is a countable subcover. Without loss of generality we may and do assume that the countably many open balls which unify to $G_M$ belong to this countable subcover. Defining $G_{M+1}$ as the union of the elements of this subcover we obtain $G_M\subseteq G_{M+1}$ and (\ref{Skorohod - CMT - Eric - PROOF - 20}) for $m=M+1$.

As countable unions of open balls the sets $G_m$, $m\in\N$, are open and lie in ${\cal B}^\circ$. Then, using (\ref{Skorohod - CMT - Eric - PROOF - 10}), the first ``$\subseteq$'' in (\ref{Skorohod - CMT - Eric - PROOF - 20}), and the inclusions $G_1\subseteq G_2\subseteq\cdots$ (along with the continuity from below of $\pr_0\circ\xi_0^{-1}$),
\begin{eqnarray}\label{Skorohod - CMT - Eric - PROOF - 30}
    \pr_0\circ h_0(\xi_0)^{-1}\big[\widetilde G\big]
    & = & \pr_0\circ\xi_0^{-1}\big[h_0^{-1}(\widetilde G)\big]\nonumber\\
    & = & \pr_0\circ\xi_0^{-1}\big[h_0^{-1}(\widetilde G)\cap\bE_0\big]\nonumber\\
    & \le & \pr_0^{\scriptsize{\sf out}}\Big[\xi_0\in \bigcup_{m=1}^\infty\Big(\Big\{\bigcap_{k=m}^\infty h_k^{-1}(\widetilde G)\Big\}^{\scriptsize{\rm int}}\bigcap\,\bE_0\Big)\Big]\nonumber\\
    & \le & \pr_0\Big[\xi_0\in\bigcup_{m=1}^\infty G_m\Big]\nonumber\\
    & \le & \pr_0\circ\xi_0^{-1}\Big[\bigcup_{m=1}^\infty G_m\Big]\nonumber\\
    & = & \sup_{m\in\N}\,\pr_0\circ\xi_0^{-1}[G_m]\nonumber\\
    & \le & \sup_{m\in\N}\,\liminf_{n\to\infty}\,\pr_n\circ\xi_n^{-1}[G_m],
\end{eqnarray}
where the last step follows from assumption (a) and the implication (a)$\Rightarrow$(d) in the Portmanteau theorem \ref{Portemanteau}. Now, (\ref{Skorohod - CMT - Eric - PROOF - 30}) and the second ``$\subseteq$'' in (\ref{Skorohod - CMT - Eric - PROOF - 20}) yield
\begin{eqnarray*}
    \pr_0\circ h_0(\xi_0)^{-1}\big[\widetilde G\big]
    & \le & \sup_{m\in\N}\,\liminf_{n\to\infty}\,\pr_n^{\scriptsize{\sf out}}\Big[\xi_n\in\bigcap_{k=m}^\infty h_k^{-1}(\widetilde G)\Big]\\
    & \le & \liminf_{n\to\infty}\,\pr_n\big[\xi_n\in h_n^{-1}(\widetilde G)\big] \\
    & = & \liminf_{n\to\infty}\,\pr_n\circ h_n(\xi_n)^{-1}\big[\widetilde G\big].
\end{eqnarray*}
This completes the proof.
\end{proof}

Before giving the generalization of Theorem 4.1 in \citet{BeutnerZaehle2010} we recall the definition of quasi-Hadamard differentiability. For this let $\V$ and $\widetilde\bE$ be vector spaces, and $\bE\subseteq\V$ be a subspace of $\V$. Let $\|\cdot\|_{\bE}$ and $\|\cdot\|_{\widetilde \bE}$ be norms on $\bE$ and $\widetilde\bE$, respectively.

\begin{definition}\label{definition quasi hadamard}
{\bf (Quasi-Hadamard differentiability)}
Let $H:\V_H\rightarrow\widetilde\bE$ be a map defined on some $\V_H\subseteq\V$, and $\bE_0$ be a subset of $\bE$. Then $H$ is said to be quasi-Hadamard differentiable at $x \in \V_H$ tangentially to $\bE_0\langle\bE\rangle$ if there is some continuous map $\dot H_{x}:\bE_0\rightarrow\widetilde\bE$ such that
\begin{eqnarray}\label{def eq for HD}
    \lim_{n\to\infty}\Big\| \dot H_x(x_0)-\frac{H(x+\varepsilon_nx_n)-H(x)}{\varepsilon_n}\Big\|_{\widetilde\bE}\,=\,0
\end{eqnarray}
holds for each triplet $(x_0,(x_n),(\varepsilon_n))$, with $x_0\in\bE_0$, $(x_n)\subseteq\bE$ satisfying $\|x_n-x_0\|_{\bE}\to 0$ as well as $(x+\varepsilon_nx_n)\subseteq\V_H$, and $(\varepsilon_n)\subset(0,\infty)$ satisfying $\varepsilon_n\to 0$. In this case the map $\dot H_x$ is called quasi-Hadamard derivative of $H$ at $x$ tangentially to $\bE_0\langle\bE\rangle$.
\end{definition}

Recall that $\widetilde\bE$ is a vector space equipped with a norm $\|\cdot\|_{\widetilde\bE}$, and let $0_{\widetilde\bE}$ denote the null in $\widetilde\bE$. Set $\overline{\widetilde\bE}:=\widetilde\bE\times\widetilde\bE$ and let $\overline{\widetilde{\cal B}^\circ}$ be the $\sigma$-algebra on $\overline{\widetilde\bE}$ generated by the open balls w.r.t.\ the metric $\overline{\widetilde d}((\widetilde x_1,\widetilde x_2),(\widetilde y_1,\widetilde y_2)):=\max\{\|\widetilde x_1-\widetilde y_1\|_{\widetilde\bE};\|\widetilde x_2-\widetilde y_2\|_{\widetilde\bE}\}$. Recall that $\overline{\widetilde{\cal B}^\circ}\subseteq\widetilde{\cal B}^\circ\otimes\widetilde{\cal B}^\circ$, because any $\overline{\widetilde{d}}$-open ball in $\overline{\widetilde{\bE}}$ is the product of two $\|\cdot\|_{\widetilde\bE}$-open balls in $\widetilde\bE$. Let $(\Omega_n,{\cal F}_n,\pr_n)$ be a probability space and $X_n:\Omega_n\rightarrow\bE$ be any map, $n\in\N$. Recall that $\leadsto^\circ$ and $\rightarrow^\circ$ refer to convergence in distribution$^\circ$ and convergence in probability$^{{\sf p},\circ}$, respectively.

\begin{theorem}\label{modified delta method}
{\bf (Delta-method)} Let $H:\V_H\to\widetilde\bE$ be a map defined on some $\V_H\subseteq\bE$, and $x\in\V_H$. Let $\bE_0\in{\cal B}^\circ$ be some $\|\cdot\|_{\bE}$-separable subset of $\bE$. Let $(a_n)$ be a sequence of positive real numbers tending to $\infty$, 
and consider the following conditions:
\begin{itemize}
    \item[(a)] $X_n$ takes values only in $\V_H$.
    \item[(b)] $a_n(X_n-x)$ takes values only in $\bE$, is $({\cal F}_n,{\cal B}^\circ)$-measurable and satisfies
    \begin{eqnarray}\label{modified delta method - assumption}
         a_n(X_n-x) \leadsto^\circ X_0\qquad\mbox{in $(\bE,{\cal B}^\circ,\|\cdot\|_{\bE})$}
     \end{eqnarray}
    for some $(\bE,{\cal B}^\circ)$-valued random variable $X_0$ on some probability space $(\Omega_0,{\cal F}_0,\pr_0)$ with $X_0(\Omega_0)\subseteq\bE_0$.
    \item[(c)] $a_n(H(X_n)-H(x))$ is $({\cal F}_n,\widetilde{\cal B}^\circ)$-measurable.
    \item[(d)] The map $H$ is quasi-Hadamard differentiable at $x$ tangentially to $\bE_0\langle\bE\rangle$ with quasi-Hadamard derivative $\dot H_x:\bE_0\rightarrow\widetilde\bE$.
    \item[(e)] $(\Omega_n,{\cal F}_n,\pr_n)=(\Omega,{\cal F},\pr)$ for all $n\in\N$.
    \item[(f)] The quasi-Hadamard derivative $\dot H_x$ can be extended to $\bE$ such that the extension $\dot H_x:\bE\rightarrow\widetilde\bE$ is continuous at every point of $\bE_0$ and $({\cal B}^\circ,\widetilde{\cal B}^\circ)$-measurable.
    \item[(g)] The map $h:\overline{\widetilde\bE}\rightarrow\widetilde\bE$ defined by $h(\widetilde x_1,\widetilde x_2):=\widetilde x_1-\widetilde x_2$ is $(\overline{\widetilde{\cal B}^\circ},\widetilde{\cal B}^\circ)$-measurable.
\end{itemize}
Then the following two assertions hold:
\begin{itemize}
    \item[(i)] If conditions (a)--(d) hold true, then $\dot H_x(X_0)$ is $({\cal F}_0,\widetilde{\cal B}^\circ)$-measurable and
    $$
        a_n\big(H(X_n)-H(x)\big)\,\leadsto^\circ\,\dot H_x(X_0)\qquad\mbox{in $(\widetilde\bE,\widetilde{\cal B}^\circ,\|\cdot\|_{\widetilde\bE})$}.
    $$
    \item[(ii)] If conditions (a)--(g) hold true, then
        \begin{equation}\label{modified fct delta eq - ii}
            a_n\big(H(X_n)-H(x)\big)-\dot H_x\big(a_n(X_n-x)\big)\,\rightarrow^{{\sf p},\circ}\,0_{\widetilde\bE}\qquad\mbox{in $(\widetilde\bE,\|\cdot\|_{\widetilde\bE})$}.
    \end{equation}
\end{itemize}
\end{theorem}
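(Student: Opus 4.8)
The plan is to derive both assertions from the extended Continuous Mapping theorem (Theorem \ref{Skorohod - CMT}). For part (i), I would set $\xi_n:=a_n(X_n-x)$ and $\bE_n:=\{z\in\bE:x+a_n^{-1}z\in\V_H\}$, and define $h_n:\bE_n\to\widetilde\bE$ by $h_n(z):=a_n(H(x+a_n^{-1}z)-H(x))$, so that $h_n(\xi_n)=a_n(H(X_n)-H(x))$ and, by condition (a), $\xi_n(\Omega_n)\subseteq\bE_n$. Taking $h_0:=\dot H_x$ on $\bE_0$, I would check the hypotheses of Theorem \ref{Skorohod - CMT}: condition (b) here supplies $\xi_n\leadsto^\circ X_0$ with $X_0(\Omega_0)\subseteq\bE_0$; condition (c) supplies the $({\cal F}_n,\widetilde{\cal B}^\circ)$-measurability of $h_n(\xi_n)$; and since $\bE_0$ is separable its open-ball $\sigma$-algebra ${\cal B}_0^\circ$ is the Borel one, whence the continuity of $\dot H_x$ on $\bE_0$ yields $({\cal B}_0^\circ,\widetilde{\cal B}^\circ)$-measurability (a fortiori into the smaller $\widetilde{\cal B}^\circ$). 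The only assumption needing real work is hypothesis (b) of Theorem \ref{Skorohod - CMT}: for $z_n\in\bE_n$ with $\|z_n-z_0\|_\bE\to0$ and $z_0\in\bE_0$ I must show $\|h_n(z_n)-\dot H_x(z_0)\|_{\widetilde\bE}\to0$, and with $\varepsilon_n:=a_n^{-1}$ and $x_n:=z_n$ this is precisely the defining limit (\ref{def eq for HD}) of quasi-Hadamard differentiability at $x$ tangentially to $\bE_0\langle\bE\rangle$. Theorem \ref{Skorohod - CMT} then delivers $a_n(H(X_n)-H(x))\leadsto^\circ\dot H_x(X_0)$, and $\dot H_x(X_0)$ is $({\cal F}_0,\widetilde{\cal B}^\circ)$-measurable as a composition of the $({\cal F}_0,{\cal B}_0^\circ)$-measurable $X_0$ with $\dot H_x$.

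For part (ii) — where $\dot H_x$ now denotes the extension from condition (f) — I would introduce $g_n:\bE_n\to\widetilde\bE$, $g_n(z):=h_n(z)-\dot H_x(z)$, so that $g_n(\xi_n)$ is exactly the left-hand side of (\ref{modified fct delta eq - ii}). The plan is to prove $g_n(\xi_n)\leadsto^\circ 0_{\widetilde\bE}$ and then, using condition (e) to place all $\xi_n$ on the single space $(\Omega,{\cal F},\pr)$, to upgrade this to convergence in probability$^\circ$ via Proposition \ref{conv prob equiv conv distr}(ii), the limit being the constant $0_{\widetilde\bE}$. For the distributional statement I would apply Theorem \ref{Skorohod - CMT} once more, now with the maps $g_n$ and the constant limit map $h_0\equiv0_{\widetilde\bE}$ on $\bE_0$ (trivially $({\cal B}_0^\circ,\widetilde{\cal B}^\circ)$-measurable). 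Its hypothesis (b) reduces to $\|g_n(z_n)\|_{\widetilde\bE}\to0$ whenever $\|z_n-z_0\|_\bE\to0$ with $z_0\in\bE_0$, which I would obtain from the split $\|g_n(z_n)\|_{\widetilde\bE}\le\|h_n(z_n)-\dot H_x(z_0)\|_{\widetilde\bE}+\|\dot H_x(z_n)-\dot H_x(z_0)\|_{\widetilde\bE}$, the first term vanishing by quasi-Hadamard differentiability as in part (i) and the second by continuity of the extension $\dot H_x$ at $z_0\in\bE_0$ (condition (f)).

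The hard part will be the measurability of $g_n(\xi_n)$, which is needed both to invoke Theorem \ref{Skorohod - CMT} and to give meaning to convergence in probability$^\circ$: in a non-separable $\widetilde\bE$ the difference of two $\widetilde{\cal B}^\circ$-measurable maps need not be $\widetilde{\cal B}^\circ$-measurable, and this is exactly why condition (g) is imposed. The argument I would give is that the pair $(h_n(\xi_n),\dot H_x(\xi_n))$ is $({\cal F}_n,\widetilde{\cal B}^\circ\otimes\widetilde{\cal B}^\circ)$-measurable — its first coordinate by condition (c), its second because the extension $\dot H_x$ is $({\cal B}^\circ,\widetilde{\cal B}^\circ)$-measurable by condition (f) and $\xi_n$ is $({\cal F}_n,{\cal B}^\circ)$-measurable by condition (b) — hence $({\cal F}_n,\overline{\widetilde{\cal B}^\circ})$-measurable, since $\overline{\widetilde{\cal B}^\circ}\subseteq\widetilde{\cal B}^\circ\otimes\widetilde{\cal B}^\circ$. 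Composing with the subtraction map $h$, which is $(\overline{\widetilde{\cal B}^\circ},\widetilde{\cal B}^\circ)$-measurable by condition (g), then shows $g_n(\xi_n)=h(h_n(\xi_n),\dot H_x(\xi_n))$ is $({\cal F}_n,\widetilde{\cal B}^\circ)$-measurable. Finally, the measurability of $\omega\mapsto\|g_n(\xi_n(\omega))\|_{\widetilde\bE}$ demanded by the definition of convergence in probability$^\circ$ follows from Lemma \ref{Skorohod - Lemma - Lemma} applied at the point $0_{\widetilde\bE}$.
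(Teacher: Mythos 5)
Your part (i) is exactly the paper's argument: the same $\bE_n$, $h_n$, $h_0=\dot H_x$, and the same verification of the hypotheses of the extended Continuous Mapping theorem \ref{Skorohod - CMT}, with hypothesis (b) there supplied by quasi-Hadamard differentiability. Your part (ii) is also correct but follows a genuinely different (and somewhat more economical) route. The paper forms the $\overline{\widetilde\bE}$-valued maps $\overline h_n(z):=(h_n(z),\dot H_x(z))$ and $\overline h_0(z_0):=(\dot H_x(z_0),\dot H_x(z_0))$, applies the extended CMT in the product space $(\overline{\widetilde\bE},\overline{\widetilde{\cal B}^\circ},\overline{\widetilde d})$ to obtain joint convergence in distribution$^\circ$ of the pair, and only then applies the ordinary Continuous Mapping theorem with the subtraction map $h$ of condition (g) to conclude $h_n(\xi_n)-\dot H_x(\xi_n)\leadsto^\circ 0_{\widetilde\bE}$; Proposition \ref{conv prob equiv conv distr} finishes as in your write-up. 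You instead apply the extended CMT a single time to the scalar difference $g_n:=h_n-\dot H_x$ with the constant limit map $h_0\equiv 0_{\widetilde\bE}$, verifying its hypothesis (b) by the triangle-inequality split into the quasi-Hadamard term and the continuity-of-the-extension term. Both arguments use condition (g) for the same essential purpose, namely to get $(\overline{\cal F}_n$-, here ${\cal F}_n$-$)$measurability of the difference out of the $({\cal F}_n,\overline{\widetilde{\cal B}^\circ})$-measurability of the pair $(h_n(\xi_n),\dot H_x(\xi_n))$ via $\overline{\widetilde{\cal B}^\circ}\subseteq\widetilde{\cal B}^\circ\otimes\widetilde{\cal B}^\circ$; you just use it before invoking the extended CMT rather than after. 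Your route avoids having to discuss weak$^\circ$ convergence in the nonseparable product space, while the paper's route yields as a by-product the joint convergence of the pair, which is a slightly stronger intermediate fact (not needed for the stated conclusion). Your handling of the residual measurability issue for convergence in probability$^\circ$ via Lemma \ref{Skorohod - Lemma - Lemma} at $0_{\widetilde\bE}$ matches what Proposition \ref{conv prob equiv conv distr} already builds in. I see no gap.
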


\begin{remarknorm}
It is apparent from the following proof that for part (i) of Theorem \ref{modified delta method} it is not necessary to assume (as in Definition \ref{definition quasi hadamard}) that the quasi-Hadamard derivative $\dot H_x$ is continuous. It would suffice to require in Definition \ref{definition quasi hadamard} that the map $\dot H_x$ is $({\cal B}_0^\circ,\widetilde{{\cal B}}^\circ)$-measurable for the trace $\sigma$-algebra ${\cal B}_0^\circ:={\cal B}^\circ\cap\bE_0$ ($\subseteq{\cal B}^\circ$).
{\hspace*{\fill}$\Diamond$\par\bigskip}
\end{remarknorm}

\begin{proof}{\bf of Theorem \ref{modified delta method}}\,
For the proof of part (i) we adapt the arguments in the proof of Theorem 3.9.4 in \cite{van der Vaart Wellner 1996}, which then allow for an easy proof of part (ii).

(i): For every $n\in\N$, let $\bE_{n}:=\{x_n\in\bE:a_n^{-1}x_n+x \in \V_H\}$ and define the map $h_n:\bE_{n} \rightarrow \widetilde\bE$ by
$$
    h_n(x_n)\,:=\,\frac{H(x+a_n^{-1}x_n)-H(x)}{a_n^{-1}}\,.
$$
Moreover, define the map $h_0: \bE_0 \rightarrow \widetilde\bE$ by
$$
    h_0(x_0)\,:=\,\dot H_x(x_0).
$$
Now, the claim would follow by the extended Continuous Mapping theorem \ref{Skorohod - CMT} applied to the functions $h_n$, $n\in\N_0$, and the random variables $\xi_n:=a_n(X_n-x)$, $n\in\N$, and $\xi_0:=X_0$ if we can show that the assumptions of Theorem \ref{Skorohod - CMT} are satisfied. First, $\xi_n(\Omega_n)\subseteq\bE_n$ and $\xi_0(\Omega_0)\subseteq\bE_0$ clearly hold. Second, by assumption (c) we have that $h_n(\xi_n)=a_n(H(X_n)-H(x))$ is $({\cal F}_n,\widetilde{\cal B}^\circ)$-measurable.
Third, the map $h_0$ is continuous by assumption (on the quasi-Hadamard derivative). Thus $h_0$ is $({\cal B}_0^\circ,\widetilde{\cal B}^\circ)$-measurable, because the trace $\sigma$-algebra ${\cal B}_0^\circ:={\cal B}^\circ\cap\bE_0$ coincides with the Borel $\sigma$-algebra on $\bE_0$ (recall that $\bE_0$ is separable). In particular, $\dot H_x(X_0)$ is $({\cal F}_0,\widetilde{\cal B}^\circ)$-measurable. Fourth, condition (a) of Theorem \ref{Skorohod - CMT} holds by assumption (b). Fifth, condition (b) of Theorem \ref{Skorohod - CMT} is ensured by assumption (d) (note that (d) implies (\ref{def eq for HD})).

(ii): For every $n\in\N$, let $\bE_n$ and $h_n$ be as above and define the map $\overline h_n: \bE_{n} \rightarrow \overline{\widetilde\bE}$ by
$$
    \overline h_n(x_n)\,:=\,(h_n(x_n),\dot H_x(x_n)).
$$
Moreover, define the map $\overline h_0: \bE_0 \rightarrow\overline{\widetilde\bE}$ by
$$
    \overline h_0(x_0)\,:=\,(h_0(x_0),\dot H_x(x_0))\,=\,(\dot H_x(x_0),\dot H_x(x_0)).
$$
We will first show that
\begin{equation}\label{modified delta method PROOF 10}
    \overline h_n(a_n(X_n-x))\,\leadsto^\circ\, \overline h_0(X_0)\qquad\mbox{in $(\overline{\widetilde{\bE}},\overline{\widetilde{\cal B}^\circ},\overline{\widetilde d})$}.
\end{equation}
For (\ref{modified delta method PROOF 10}) it suffices to show that the assumption of the extended Continuous Mapping theorem \ref{Skorohod - CMT} applied to the functions $\overline h_n$ and $\xi_n$ (as defined above) are satisfied. The claim then follows by Theorem \ref{Skorohod - CMT}. First, we have already observed that $\xi_n(\Omega_n)\subseteq\bE_n$ and $\xi_0(\Omega_0)\subseteq\bE_0$. Second, we have seen in the proof of part (i) that $h_n(\xi_n)$ is $({\cal F}_{n},\widetilde{\cal B}^\circ)$-measurable, $n \in \N$. By assumption (f) the extended map $\dot H_x:\bE\rightarrow\widetilde\bE$ is $({\cal B}^\circ,\widetilde{\cal B}^\circ)$-measurable, which implies that $\dot H_x(\xi_n)$ is $({\cal F}_{n},\widetilde{\cal B}^\circ)$-measurable. Thus, $\overline h_n(\xi_n)=(h_n(\xi_n),\dot H_x(\xi_n))$ is $({\cal F}_{n},\widetilde{\cal B}^\circ\otimes\widetilde{\cal B}^\circ)$-measurable (to see this note that, in view of $\widetilde{\cal B}^\circ\otimes\widetilde{\cal B}^\circ=\sigma(\pi_1,\pi_2)$ for the coordinate projections $\pi_1,\pi_2$ on $\overline{\widetilde E}={\widetilde E}\times{\widetilde E}$, Theorem 7.4 of \cite{Bauer2001} shows that the map $(h_n(\xi_n),\dot H_x(\xi_n))$ is $({\cal F}_{n},\widetilde{\cal B}^\circ\otimes\widetilde{\cal B}^\circ)$-measurable if and only if the maps $h_n(\xi_n)=\pi_1\circ(h_n(\xi_n),\dot H_x(\xi_n))$ and $\dot H_x(\xi_n)=\pi_2\circ(h_n(\xi_n),\dot H_x(\xi_n))$ are $({\cal F}_n,{\widetilde{\cal B}}^\circ)$-measurable). In particular, the map $\overline h_n(\xi_n)=(h_n(\xi_n),\dot H_x(\xi_n))$ is $({\cal F}_{n},\overline{\widetilde{\cal B}^\circ})$-measurable, $n\in\N$. Third, we have seen in the proof of part (i) that the map $h_0=\dot H_x$ is $({\cal B}_0^\circ,\widetilde{\cal B}^\circ)$-measurable. Thus the map $\overline h_0$ is $({\cal B}_0^\circ,\widetilde{\cal B}^\circ\otimes\widetilde{\cal B}^\circ)$-measurable (one can argue as above) and in particular $({\cal B}_0^\circ,\overline{\widetilde{\cal B}^\circ})$-measurable. Fourth, condition (a) of Theorem \ref{Skorohod - CMT} holds by assumption (b). Fifth, condition (b) of Theorem \ref{Skorohod - CMT} is ensured by assumption (d) and the continuity of the extended map $\dot H_x$ at every point of $\bE_0$ (recall assumption (f)). Hence, (\ref{modified delta method PROOF 10}) holds.

By assumption (g) and the ordinary Continuous Mapping theorem (cf.\ \citet[Theorem 6.4]{Billingsley1999}) applied to (\ref{modified delta method PROOF 10}) and the map $h:\overline{\widetilde\bE}\rightarrow\widetilde\bE$, $(\widetilde x_1,\widetilde x_2)\mapsto\widetilde x_1-\widetilde x_2$, we now have
$$
    h_n(a_n(X_n-x))-\dot H_x(a_n(X_n-x))\,\leadsto^\circ\,\dot H_x(X_0)-\dot H_x(X_0),
$$
i.e.
$$
    a_n\big(H(X_n)-H(x)\big)-\dot H_x\big(a_n(X_n-x)\big)\,\leadsto^\circ\,0_{\widetilde\bE}.
$$
By Proposition \ref{conv prob equiv conv distr} we can conclude (\ref{modified fct delta eq - ii}).
\end{proof}


\section{Probability kernels and conditional distributions}\label{section topology 30}

Let $(\Omega,{\cal F})$ be a measurable space. Let $(\bE,d)$ be a metric space and ${\cal B}^\circ$ be the open-ball $\sigma$-algebra on $\bE$. A map $P:\Omega\times{\cal B}^\circ\rightarrow[0,1]$ is said to be a {\em probability kernel} from $(\Omega,{\cal F})$ to $(\bE,{\cal B}^\circ)$ if $P(\,\cdot\,,A)$ is $({\cal F},{\cal B}([0,1]))$-measurable for every $A\in{\cal B}^\circ$, and $P(\omega,\,\cdot\,)$ is a probability measure on $(\bE,{\cal B}^\circ)$ for every $\omega\in\Omega$. Of course, we may regard $P$ as a map from $\Omega$ to ${\cal M}_1^\circ$. Recall that ${\cal M}_1^\circ={\cal M}_1$ when $(\bE,d)$ is separable. If in this case the set ${\cal M}_1$ is equipped with the weak topology ${\cal O}_{\rm w}$, then a probability kernel can be regarded as an ${\cal M}_1$-valued random variable (w.r.t.\ any probability measure on $(\Omega,{\cal F})$):

\begin{lemma}\label{topology 200}
Let $(\bE,d)$ be separable and $P$ be a probability kernel from $(\Omega,{\cal F})$ to $(\bE,{\cal B})$. Then the mapping $\omega\mapsto P(\omega,\bullet)$ is $({\cal F},\sigma({\cal O}_{\rm w}))$-measurable.
\end{lemma}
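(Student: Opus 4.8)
The plan is to regard the kernel as the single mapping $T:\Omega\to{\cal M}_1$ given by $T(\omega):=P(\omega,\bullet)$, and to exploit that, by its very definition, the weak topology ${\cal O}_{\rm w}$ is the initial (coarsest) topology rendering each evaluation map $\pi_f:{\cal M}_1\to\R$, $\mu\mapsto\int f\,d\mu$, $f\in C_{\rm b}$, continuous. Accordingly I would first reduce the Borel measurability of $T$ to the far more accessible measurability of the real-valued maps $\omega\mapsto\pi_f(T(\omega))=\int f\,dP(\omega,\bullet)$, $f\in C_{\rm b}$.

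The reduction rests on the identity $\sigma({\cal O}_{\rm w})=\sigma(\pi_f:f\in C_{\rm b})$. The inclusion ``$\supseteq$'' is immediate, since each $\pi_f$ is ${\cal O}_{\rm w}$-continuous and hence $\sigma({\cal O}_{\rm w})$-measurable. For ``$\subseteq$'' I would invoke Theorem \ref{aliprantis}, which guarantees that $({\cal M}_1,{\cal O}_{\rm w})$ is separable and metrizable, hence second countable. For a second countable space whose topology is the initial topology generated by a family of real functions, the Borel $\sigma$-algebra coincides with the $\sigma$-algebra generated by that family: the finite intersections of the subbasic sets $\pi_f^{-1}(G)$, $G\subseteq\R$ open, form a base all of whose members already lie in $\sigma(\pi_f:f\in C_{\rm b})$, and second countability allows one to write every open set as a \emph{countable} union of such basic sets, so that every open set---and therefore every Borel set---lies in $\sigma(\pi_f:f\in C_{\rm b})$. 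Once this identity is established, a standard measurability criterion for maps into a $\sigma$-algebra generated by a family of functions (in the spirit of Theorem 7.4 in \cite{Bauer2001}) shows that $T$ is $({\cal F},\sigma({\cal O}_{\rm w}))$-measurable as soon as each composition $\pi_f\circ T$ is $({\cal F},{\cal B}(\R))$-measurable.

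It then remains to verify that $\omega\mapsto\int f\,dP(\omega,\bullet)$ is $({\cal F},{\cal B}(\R))$-measurable for every $f\in C_{\rm b}$, and here I would run the standard functional monotone class argument. For an indicator $f=\eins_A$ with $A\in{\cal B}$ one has $\int\eins_A\,dP(\omega,\bullet)=P(\omega,A)$, which is $({\cal F},{\cal B}([0,1]))$-measurable by the very definition of a probability kernel; by linearity the measurability extends to simple functions, and by monotone approximation (splitting into positive and negative parts) to all bounded ${\cal B}$-measurable $f$. Since $(\bE,d)$ is separable we have ${\cal B}^\circ={\cal B}$, so every $f\in C_{\rm b}=C_{\rm b}^\circ$ is in particular bounded and ${\cal B}$-measurable, which closes this step.

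The only genuinely delicate point is the reverse inclusion $\sigma({\cal O}_{\rm w})\subseteq\sigma(\pi_f:f\in C_{\rm b})$: a priori the weak topology is generated by the \emph{uncountable} family $\{\pi_f\}_{f\in C_{\rm b}}$, and an arbitrary open set is an uncountable union of subbasic sets, which need not remain inside the generated $\sigma$-algebra. Separability, i.e.\ second countability via Theorem \ref{aliprantis}, is exactly what reduces these unions to countable ones. By contrast, the kernel-integral measurability in the last step is entirely routine.
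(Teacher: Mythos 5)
Your proof is correct and follows essentially the same route as the paper: both reduce the claim, via the identity $\sigma({\cal O}_{\rm w})=\sigma(\pi_f:f\in C_{\rm b})$, to the $({\cal F},{\cal B}(\R))$-measurability of $\omega\mapsto\int f\,dP(\omega,\cdot)$ for each $f\in C_{\rm b}$. The only difference is presentational: where you prove the two ingredients yourself (the $\sigma$-algebra identity via second countability from Theorem \ref{aliprantis} together with a Lindel\"of argument, and the kernel-integral measurability via a monotone class argument), the paper simply cites them (the proof of Theorem 19.7 in Aliprantis--Border and Lemma 1.41 in Kallenberg, respectively).
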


\begin{proof}
Since $(\bE,d)$ was assumed to be separable, the proof of the implication (4)$\Rightarrow$(1) in Theorem 19.7 in \citet{AliprantisBorder1999} shows that $\sigma({\cal O}_{\rm w})$ equals the $\sigma$-algebra generated by the system $\{\pi_f^{-1}(A): f \in C_{\rm b},\,A\subseteq\R\mbox{ open}\}$. So it suffices to show that the set
$$
    P(\,\cdot\,,\bullet)^{-1}(\pi_f^{-1}(A))\,=\,\pi_f(P(\,\cdot\,,\bullet))^{-1}(A)\,=\,\Big(\int f(x)\,P(\,\cdot\,,dx)\Big)^{-1}(A)
$$
is contained in ${\cal F}$ for every open $A \subseteq\R$ and $f\in C_{\rm b}$. But this follows from the well known fact (see e.g.\ Lemma 1.41 in \cite{Kallenberg2002}) that the mapping $\omega\mapsto\int f(x)P(\omega,dx)$ is $({\cal F},{\cal B}(\R))$-measurable for every $f\in C_{\rm b}$. This finishes the proof.
\end{proof}

Now, let $(\Omega',{\cal F}')$ and $(\bD,{\cal D})$ be further measurable spaces. Let $\pr$ and $\pr'$ be probability measures on respectively $\Omega$ and $\Omega'$, and set $(\overline\Omega,\overline{\cal F},\overline{\pr}):=(\Omega\times\Omega',{\cal F}\otimes{\cal F}',\pr\otimes\pr')$. Let $Y:\Omega\rightarrow\bD$ be an $({\cal F},{\cal D})$-measurable map and $X:\overline\Omega\rightarrow\bE$ be an $(\overline{\cal F},{\cal B}^\circ)$-measurable map. Note that $Y$ can also be regarded as a $(\bD,{\cal D})$-valued random variable on $(\overline\Omega,\overline{\cal F},\overline{\pr})$, and we are doing that in Lemma \ref{reresentation of P n}. The following lemma shows that under an additional assumption, the conditional distribution of $X$ given $Y$ can be specified explicitly.

\begin{lemma}\label{reresentation of P n}
Assume that $X(\omega,\omega')=g(Y(\omega),\omega')$ holds for all $(\omega,\omega')\in\overline\Omega$ and some $({\cal D}\otimes{\cal F}',{\cal B}^\circ)$-measurable map $g:\bD\times\Omega'\rightarrow\bE$. Then the map $P:\overline\Omega\times{\cal B}^\circ\rightarrow[0,1]$ defined by
$$
    P((\omega,\omega'),A)\,:=\,P(\omega,A)\,:=\,\pr'\circ X(\omega,\cdot)^{-1}[A],\qquad (\omega,\omega')\in\overline\Omega,\,A\in{\cal B}^\circ
$$
provides a conditional distribution of $X$ given $Y$.
\end{lemma}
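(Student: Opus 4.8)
The plan is to reduce this product-space statement to a statement about a probability kernel on the smaller space $\bD$, and then to verify the two defining requirements of a (regular) conditional distribution of $X$ given $Y$: first, that $P(\cdot,A)$ is $\sigma(Y)$-measurable for every $A\in{\cal B}^\circ$ and that $P((\omega,\omega'),\cdot)$ is a probability measure for every $(\omega,\omega')$, so that $P$ is a probability kernel from $(\overline\Omega,\sigma(Y))$ to $(\bE,{\cal B}^\circ)$; and second, that $\int_B P(\cdot,A)\,d\overline\pr=\overline\pr[\{X\in A\}\cap B]$ holds for all $A\in{\cal B}^\circ$ and $B\in\sigma(Y)$.

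First I would introduce the auxiliary map $K:\bD\times{\cal B}^\circ\to[0,1]$, $K(y,A):=\pr'\circ g(y,\cdot)^{-1}[A]$, and observe that $P((\omega,\omega'),A)=K(Y(\omega),A)$; this uses only the representation $X(\omega,\omega')=g(Y(\omega),\omega')$ together with the fact that the section $g(y,\cdot)$ is $({\cal F}',{\cal B}^\circ)$-measurable for each fixed $y$ (a section of the jointly measurable $g$), so that the push-forward defining $K(y,\cdot)$ makes sense. The key technical step is then to show that $K$ is a probability kernel from $(\bD,{\cal D})$ to $(\bE,{\cal B}^\circ)$. That $K(y,\cdot)$ is a probability measure is immediate, being the image of $\pr'$ under $g(y,\cdot)$; the $({\cal D},{\cal B}([0,1]))$-measurability of $y\mapsto K(y,A)=\int_{\Omega'}\eins_A(g(y,\omega'))\,\pr'[d\omega']$ is where the work lies. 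I expect this to be the main obstacle, but it is exactly the standard measurability part of Fubini--Tonelli: the map $(y,\omega')\mapsto\eins_A(g(y,\omega'))$ is $({\cal D}\otimes{\cal F}')$-measurable because $g$ is $({\cal D}\otimes{\cal F}',{\cal B}^\circ)$-measurable and $A\in{\cal B}^\circ$, and integrating out $\omega'$ against $\pr'$ therefore leaves a ${\cal D}$-measurable function of $y$.

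Once $K$ is known to be a kernel, both requirements follow readily. Since $P(\cdot,A)=K(\cdot,A)\circ Y$ factors through the $({\cal F},{\cal D})$-measurable map $Y$ (regarded as a random variable on $\overline\Omega$), it is $\sigma(Y)$-measurable by the factorization lemma, and $P(\omega,\cdot)=K(Y(\omega),\cdot)$ is a probability measure; hence $P$ is a probability kernel from $(\overline\Omega,\sigma(Y))$ to $(\bE,{\cal B}^\circ)$. For the integral identity it suffices, by a $\pi$--$\lambda$ argument, to test on the generating $\pi$-system $\{Y^{-1}(D)\times\Omega':D\in{\cal D}\}$ of $\sigma(Y)$, because both $B\mapsto\int_B P(\cdot,A)\,d\overline\pr$ and $B\mapsto\overline\pr[\{X\in A\}\cap B]$ are finite measures on $\sigma(Y)$. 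On such a set, Fubini's theorem applied to $\overline\pr=\pr\otimes\pr'$ gives for the right-hand side
$$
    \overline\pr\big[\{X\in A\}\cap(Y^{-1}(D)\times\Omega')\big]
    =\int_{Y^{-1}(D)}\Big(\int_{\Omega'}\eins_A\big(g(Y(\omega),\omega')\big)\,\pr'[d\omega']\Big)\,\pr[d\omega]
    =\int_{Y^{-1}(D)}K(Y(\omega),A)\,\pr[d\omega],
$$
while the left-hand side equals the same expression, since the integrand $P((\omega,\omega'),A)=K(Y(\omega),A)$ does not depend on $\omega'$ and $\pr'[\Omega']=1$. This establishes the identity on the $\pi$-system and hence, by uniqueness of measures, on all of $\sigma(Y)$, which completes the proof.
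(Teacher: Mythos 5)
Your proposal is correct and follows essentially the same route as the paper: the measurability of the kernel comes from the Tonelli/Fubini measurability statement applied to $(\cdot,\omega')\mapsto\eins_A(g(\cdot,\omega'))$, and the defining integral identity is verified by Fubini on sets of the form $\{Y\in D\}$. The only cosmetic differences are that you factor the kernel through an intermediate kernel $K$ on $\bD$ (the paper works directly on $\overline\Omega$ with the $\sigma$-algebra generated by $Y$) and that your $\pi$--$\lambda$ step is redundant, since $\{Y^{-1}(D)\times\Omega':D\in{\cal D}\}$ is already all of $\sigma(Y)$.
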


\begin{proof}
First, $P$ provides a probability kernel from $(\overline{\Omega},\overline\sigma(Y))$ to $(\bE,{\cal B}^\circ)$. Indeed: The mapping $\widetilde\omega'\mapsto X(\omega,\widetilde\omega')$ is $({\cal F}',{\cal B}^\circ)$-measurable for every fixed $\omega\in\Omega$, because $X$ is $(\overline{\cal F},{\cal B}^\circ)$-measurable. So it immediately follows that the mapping $A'\mapsto P(\omega,A')$ is a probability measure on $(\bE,{\cal B}^\circ)$ for every $\omega\in\Omega$. Further, the mapping $(\omega,\widetilde\omega')\mapsto (Y(\omega),\widetilde\omega')$ is clearly $(\sigma(Y)\otimes{\cal F}',{\cal D}\otimes{\cal F}')$-measurable, which implies that the mapping $(\omega,\widetilde\omega')\mapsto X(\omega,\widetilde\omega')=g(Y(\omega),\widetilde\omega')$ is $(\sigma(Y)\otimes{\cal F}',{\cal B}^\circ)$-measurable. By Tonelli's part of Fubini's theorem it follows that the mapping $\omega\mapsto \int\eins_{A}(X(\omega,\widetilde\omega'))\,\pr'[d\widetilde\omega']=P(\omega,A)$ is $(\sigma(Y),{\cal B}([0,1]))$-measurable for every $A\in{\cal B}^\circ$. In particular, the mapping $(\omega,\omega')\mapsto P((\omega,\omega'),A)=P(\omega,A)$ is $(\overline\sigma(Y),{\cal B}([0,1]))$-measurable for every $A\in{\cal B}^\circ$.

Second, by Fubini's theorem we obtain for every $B\in{\cal D}$ and $A\in{\cal B}^\circ$,
\begin{eqnarray*}
    \int_{\{Y\in B\}} P((\omega,\omega'),A)\,\overline\pr[d(\omega,\omega')]
    & = & \int_{\{Y\in B\}} \pr'\circ X(\omega,\cdot)^{-1}[A]\,\overline\pr[d(\omega,\omega')]\\
    & = & \int_{\{Y\in B\}} \pr'\circ X(\omega,\cdot)^{-1}[A]\,\pr[d\omega]\\
    & = & \iint\eins_{\{Y\in B\}}(\omega)\,\eins_{\{X(\omega,\cdot)\in A\}}(\omega')\,\pr'[d\omega']\,\pr[d\omega]\\
    & = & \int\eins_{\{Y\in B\}}(\omega)\,\eins_{\{X(\omega,\cdot)\in A\}}(\omega')\,\overline\pr[d(\omega,\omega')]\\
    & = & \overline\pr\big[\{Y\in B\}\cap\{X\in A\}\big].
\end{eqnarray*}
This completes the proof.
\end{proof}


\section*{Acknowledgement}

The second author gratefully acknowledges support by BMBF through the project HYPERMATH under grant 05M13TSC.


\end{document}